\documentclass[reqno,11pt,final]{amsart}
\usepackage{style}
\addbibresource{biblio.bib}

\title[Kolmogorov equations for filtering processes]{Kolmogorov equations on spaces of measures associated to nonlinear filtering processes}

\author{Mattia Martini}
\address{Mattia Martini: Dipartimento di Matematica "Federigo Enriques", Università degli Studi di Milano, Via Saldini 50,  20133 Milano (Italy)}
\email{mattia.martini@unimi.it}

\begin{document}
\begin{abstract}
We introduce and study some backward Kolmogorov equations associated to filtering problems. In the stochastic filtering framework, SDEs for measure-valued processes arise naturally (Zakai and Kushner-Stratonovich equation). The associated Kolmogorov equations have been intensively studies, assuming that the measure-valued processes admit a density and then by exploiting stochastic calculus in Hilbert spaces. 

Our approach differs from this since we do not assume the existence of a density and we work directly in the context of measures. We first formulate two Kolmogorov equations on spaces of measures, and then we prove existence and uniqueness of classical solutions.
\end{abstract}
\maketitle

\section{Introduction}

The main aim of this paper is to study the backward Kolmogorov equations of parabolic type associated to measure-valued processes arising in the  context of stochastic filtering. The principal result is about existence and uniqueness of classical solutions to these partial differential equations, although other intermediate results are of independent interest.\\

The study of measure-valued stochastic processes is a classical topic that has attracted an enormous interest. For instance, there is a large  literature related to the superprocesses framework (see e.g. \cite{dawsonsflour2}), but more recently it has been related to the topic of mean field games and McKean-Vlasov equations (see \cite{cardaliaguet1, lions} or \cite{carmonadelarue1,carmonadelarue2}), where probability measure-valued processes are used in problems with common noise to describe the evolution of the conditional laws of some finite dimensional stochastic processes. Thanks to this recent interest, many new results are now available, such as It\^o formulas (\cite{carmonadelarue1,carmonadelarue2}) and tools for differential calculus on spaces of measures (\cite{cardaliaguet1, carmonadelarue1, lions}). Moreover, a topic of great interest are the partial differential equations on spaces of probability measures associated to these problems, such as, for instance, the so-called master equation in the context of mean field games (see for instance \cite{cardaliaguetdelaruelasrylions, carmonadelarue2}), the backward Kolmogorov equation associated to McKean-Vlasov type equation (see for instance \cite{buckdahnlipengrainer}), or certain Hamilton-Jacobi equations (\cite{gangbotudorascu}).\\

Our work gives a contribution in this direction. Differently from the previous contexts, our aim is to study partial differential equations on space of measures associated to measure-valued processes arising in stochastic filtering problems. In particular, given a measure-valued process, we first introduce the so called backward Kolmogorov equation associated to it, which is a partial differential equation on a space of measures. Then, we study existence and uniqueness of its classical solutions.

Stochastic filtering has been intensively studied, see for instance \cite{baincrisan, xiong} and the references therein for a systematic exposition of the topic. Two basic notions of the theory are the so-called normalized and unnormalized filtering processes, which are a probability measure-valued process and a positive measure-valued process respectively,  and are proved to be the solutions, in a sense that will be clarified later, to  stochastic differential equations, called the Kushner-Stratonovich and the Zakai equation respectively.

A classical way to deal with these equations (see, for instance, \cite{pardoux, rozovsky} ) is to show that the solutions admit a density with respect to the Lebesgue measure, which possibly belongs to a suitable Hilbert space of functions. Thus, one can study the density processes instead of the measure-valued processes and rely on tools of stochastic calculus on Hilbert spaces to further explore their properties. The price to pay is the  introduction of 
unnecessary assumptions entailing that the filtering processes have a density. In this paper we avoid these conditions and rather follow the approach of \cite{baincrisan,bhattkallianpurkarandikar, kurtzocone,heunislucic,szpirglas}, where the filtering processes are studied as genuine
measure-valued processes. 

 In this framework, the Zakai equation reads as 
\begin{equation*}
	\de\scalprod{\rho_t}{\psi} = \scalprod{\rho_t}{A\psi}\de t + \scalprod{\rho_t}{h\psi + B\psi}\cdot\de Y_t,
\end{equation*} 
where the solution $\rho = \{\rho_t,t\in[0,T]\}$ is a positive measure-valued process, $A,B$ are differential operators defined by the formulae
\begin{align*}
	A\psi(x) &\defeq \sum_{i=1}^d f(x)\partial_i \psi(x) + \frac{1}{2}\sum_{i,j=1}^d(\sigma\sigma^\top)_{ij}(x)\partial_{ij}\psi(x)+(\bar\sigma\bar\sigma^\top)_{ij}(x)\partial_{ij}\psi(x),\quad x\in\R^d,\\
	B_k\psi(x) &\defeq \sum_{i=1}^d \bar{\sigma}_{ik}(x)\partial_i\psi(x) ,\quad x\in\R^d,k=1,\dots d,
\end{align*} 
and $f,\sigma,\bar\sigma,h$ are functions that have to satisfy some hypotheses we will formulate later. The Kushner-Statonovich equation reads as
\begin{equation*}
	\de\scalprod{\Pi_t}{\psi} = \scalprod{\Pi_t}{A\psi}\de t + \left (\scalprod{\Pi_t}{h\psi + B\psi} - \scalprod{\Pi_t}{\psi}\scalprod{\Pi_t}{h}\right)\cdot\de I_t,
\end{equation*}
where $\Pi = \{\Pi_t,[0,T]\}$ is probability measure-valued. In the previous equations the processes $Y$ and $I$ are Brownian motions (with respect to appropriate probability measures) and we use the notation $\scalprod{\mu}{\psi} = \int \psi(x)\mu(\de x)$. The equalities are understood to hold for every $\psi$ in a certain class of test functions. In the following we also denote with $\mathcal{M}^+(\R^d)$ and $\mathcal{P}(\R^d)$ the spaces of positive and probability measures on $\R^d$ respectively.\\

Our main results are two theorems on existence and uniqueness of classical solutions to the backward Kolmogorov equations, associated to the Zakai and the Kushner-Stratonovich equations, introduced here for the first time.
The solutions are functions $u: [0,T]\times \mathcal{M}^+(\R^d)\to \R$
or $u: [0,T]\times \mathcal{P}(\R^d)\to\R$
respectively.
 We note that 
finding  solutions to Kolmogorov equations on infinite dimensional spaces is a challenging problem and it has been studied intensively, see for instance   \cite{dapratozabczyk3},  and
the search of classical solutions is often addressed, as in 
 \cite{flandolizanco}.  Most  results are only concerned with  the Hilbert space case, namely when 
$u: [0,T]\times H\to \R$, where $H$ is a Hilbert space. The extension to spaces of measure requires entirely different methods and in particular new tools from differential calculus, as we will 
explain later.

The first result, given in Theorem \ref{thm: exbkwzakai}, concerns the backward Kolmogorov equation associated to the Zakai equation, that reads as
\begin{equation}\label{eqn: kolzakintro}
	\begin{cases}
		\partial_su(\mu,s) + \mathcal{L}u(\mu,s) = 0,\quad&(\mu,s)\in\mathcal{M}^+(\R^d)\times[0,T],\\
		u(\mu,T) = \Phi(\mu),\quad&\mu\in\mathcal{M}^+(\R^d),
	\end{cases}
\end{equation}
where 
\begin{equation*}
	\begin{aligned}
		\mathcal{L} u(\mu) &= \mu\left(\diff _\mu u(\mu)\cdot f\right)  
		+\frac{1}{2}\mu\left(\tr\left\{\diff _x\diff _\mu u(\mu)\sigma\sigma^\top\right\}\right)\\ &+ \frac{1}{2}\mu\left(\tr\left\{\diff _x\diff _\mu u(\mu)\bar{\sigma}\bar{\sigma}^\top\right\}\right)
 + \frac{1}{2}\mu\otimes\mu\left(\lf^2 u(\mu)h\cdot h\right)\\ &+ \mu\otimes\mu\left(h\cdot\bar{\sigma}^\top\lf \diff _\mu u(\mu)\right) + \frac{1}{2}\mu\otimes\mu\left(\tr\left\{\diff _\mu^2 u(\mu)\bar{\sigma}\bar{\sigma}^\top\right\}\right),
	\end{aligned} 
\end{equation*}
and $\lf u, \lf^2 u, \diff _\mu u,  \diff ^2_\mu u $ are notions of first and second-order derivatives on $\mathcal{M}^+(\R^d)$ we will discuss later, whilst $\diff _x$ denotes the gradient on $\R^d$. In Theorem \ref{thm: exbkwzakai} we show that if the terminal condition $\Phi$ is regular enough  then there exists a unique classical solution to \eqref{eqn: kolzakintro} (defined for $\mu$ in an appropriate subset of $\Mp$). Analogously, in Theorem \ref{thm: exuniqkskolm} we prove existence and uniqueness for classical solutions to the  backward Kolmogorov equation on $\mathcal{P}(\R^d)$ associated to the Kushner-Stratonovich equation, that is
\begin{equation}\label{eqn: kolksintro}
\begin{cases}
		\partial_su(\pi,s) + \mathcal{L}^{KS}u(\pi,s) = 0,\quad&(\pi,s)\in\prob(\R^d)\times[0,T],\\
		u(\pi,T) = \Phi(\pi),\quad&\pi\in\prob(\R^d),
	\end{cases}
\end{equation}
where
\begin{equation*}
	\begin{aligned}
		\mathcal{L}^{KS} u(\pi) &= \pi\left(\diff _\pi u(\pi)\cdot f\right) \\ 
		&+\frac{1}{2}\pi\left(\tr\left\{\diff _x\diff _\pi u(\pi)\sigma\sigma^\top\right\}\right)+ \frac{1}{2}\pi\left(\tr\left\{\diff _x\diff _\pi u(\pi)\bar{\sigma}\bar{\sigma}^\top\right\}\right)\\
& + \frac{1}{2}\pi\otimes\pi\left(\delta_\pi^2 u(\pi)h\cdot h\right)+ \pi\otimes\pi\left(h\cdot\bar{\sigma}^\top\delta_\pi \diff _\pi u(\pi)\right)\\
& + \frac{1}{2}\pi\otimes\pi\left(\tr\left\{\diff _\pi^2 u(\pi)\bar{\sigma}\bar{\sigma}^\top\right\}\right) + \frac{1}{2}[\pi(h)\cdot\pi(\pi)]\pi\otimes\pi\left(\delta_\pi^2 u (\pi)\right)\\
& - \pi\otimes\pi\left(\delta_\pi^2 u (\pi)h\right)\cdot\pi(h) - \pi\otimes\pi\left(\bar{\sigma}^\top\delta_\pi\diff _\pi u(\pi)\right)\cdot\pi(h).
	\end{aligned}
\end{equation*}
In both cases, the solution is given by a probabilistic representation formula related to the filtering processes solutions to the Zakai and Kushner-Stratonovich equations. 
\\

In the study of \eqref{eqn: kolzakintro} we have to face a specific difficulty: since
we deal with functions over $\Mp$  we can not rely on the various notions in differential calculus that have been developed in the last years with reference to the space of probability measures (see \cite{ambrosiogiglisavare,carmonadelarue1,gangbotudorascu,lions} for different notions of derivative and a comparison among them). For instance, the technique introduced by P.-L. Lions in \cite{lions} where the problem is lifted on a space of random variables is no longer available. However, it turns out that the notion of linear functional derivative, given in \cite{carmonadelarue1}, can be extended to our framework. More precisely, we say $\lf u\colon\Mp\times\R^d\to\R$ is the derivative of $u\colon\Mp\to\R$ in linear functional sense if $u$ and $\lf u$ have some regularity properties and if for every $\mu,\mu'\in\Mp$ it holds
\begin{equation*}
	u(\mu')-u(\mu) = \int_0^1\int_{\R^d}\lf u\left(t\mu' + (1-t)\mu,x\right) [\mu'-\mu](\de x) \de t.
\end{equation*}
Once $\lf u$ has been introduced, one can set $\diff _\mu u (\mu,x) \defeq \diff _x\lf u (\mu,x)$, $(\mu,x)\in\Mp\times\R^d$. If we restrict ourselves to the space of probability measures with finite second moment, under certain hypotheses this last definition coincides with the notion of derivative introduced by Lions, which we will call $L$-derivative. Most of the It\^o formulas available in the literature involve only $L$-derivatives, whilst in our case both linear functional and $L$-derivatives are needed. We point out that in the literature other notions of derivative for functionals over sets of positive measures have been introduced. For instance, a derivative over $\Mp$ has been introduced in the framework of measure-valued processes related to particle systems (see for instance \cite{dawsonsflour2} and \cite{kolokoltsov}) and it has been intensively used in the context of Fleming-Viot processes. It turns out that, under certain conditions, this notion coincides with the one adopted in this paper (see \cite{renwang}). Another example is \cite{albeveriokondratievrockner}, where the authors give a definition of derivative for functionals over Poisson spaces.
\\

As expected, in order to show the uniqueness property in \eqref{eqn: kolzakintro} and \eqref{eqn: kolksintro} one needs to prove a suitable It\^o formula, in our case for the composition of a real-valued function and a measure-valued process. 
In the recent literature formulas of this kind have been proved when the process takes values in a space of probability measures, see for instance \cite{buckdahnlipengrainer, cardaliaguetdelaruelasrylions, carmonadelarue2} and it is constructed as the time evolution of the one-point marginal law (in certain situations  the one-point conditional marginal law) of a given finite dimensional process.
For our purposes we need very different results.
In Proposition \ref{prop: itoz}  we provide an It\^o formula for the composition of
 a real-valued function over $\mathcal{M}^+(\R^d)$ and the
 $\mathcal{M}^+(\R^d)$-valued 
 process solution to the Zakai equation.
Similarly, in Proposition \ref{prop: itoks} we prove an It\^o formula related to the $\prob(\R^d)$-valued process solution to the Kushner-Stratonovich equation.  
Both results are new, but the latter 
 can be viewed as a generalization of the one obtained in the context of mean field games with common noise (\cite[Section 4.3]{carmonadelarue2}), as explained in greater detail in Remark \ref{rmk: whymyitoiscool}.
 One major technical difference from existing cases is the fact that
the  Zakai and  Kushner-Stratonovich  equations are understood to hold
in a weak form, namely for arbitrary choice of the occurring test function.
 In our proof we first show the formula for a smaller class of functions with good properties by exploiting the classical It\^o formula, then we obtain the general result by an approximation argument. In order to pass to the limit in the It\^o formula one needs convergence 
of the approximating functions as well as their first and second derivatives (linear functional and $L$-derivatives). The required constructions have some interest in themselves and can be used again in similar contexts.
\\
  
Concerning the existence of classical solutions to \eqref{eqn: kolzakintro} and \eqref{eqn: kolksintro}, the most difficult part of the proof is the investigation of the regularity of the solutions to the Zakai and the Kushner-Stratonovich equations with respect to the initial datum. 
Dependence of the filtering processes on the initial condition has been the object of intense study, since it is related to the problem of assessing the effect of a misspecification of the initial distribution of the signal process in the filtering problem. However, the study of the differentiability properties of the solution with respect to the intial conditions seems to be addressed here for the first time. As this relates to differentiability of measure-valued processes with respect to a measure (the starting point of the process itself) we need to introduce novel notions of differentiability for mappings from $\Mp$ (or $\prob(\R^d))$ to $\Mp$(or $\prob(\R^d))$.\\

This work lays foundations for the study of partial differential equations on spaces of measures associated to stochastic filtering problems. Object of further research will be the existence and uniqueness in viscosity sense of solutions to the Kolmogorov equations introduced in this work, under less restrictive conditions. Later, non-linear partial differential equations will be considered, in particular the Hamilton-Jacobi-Bellman equation arising from the optimal control problem with partial observation problem will be investigated, see for instance the book \cite{bensoussan} for a systematic introduction to the problem or the recent paper \cite{bensoussanyam} for a modern approach in the density case, based on mean field techniques. This problem has been already tackled for the Zakai equation in \cite{bandinicossofuhrmanpham1,bandinicossofuhrmanpham2} exploiting the randomization method and BSDEs techniques, but in the more restrictive case where the function $h$ is identically equal to zero.
A look at \eqref{eqn: kolzakintro} shows that this assumption allows the authors to rely only on $L$-derivatives and exploit previous results on well-posedness of related partial differential equations equations,
an approach which is not possible in our situation.\\

To conclude, we describe the plan of the paper. In Section \ref{sec: 2} we introduce and discuss the notions of derivatives needed later. In Section \ref{sec: approx} we provide the approximation results for real valued functions over $\Mp$, which play a key role in the proofs of the It\^o formulas. In Section \ref{sec: eqnfilter} we briefly recall the filtering problem,  we introduce the Zakai and the Kushner-Stratonovich equations and we state the hypotheses we will adopt for the rest of the paper. In Section \ref{sec: zakai} we state and prove  results on the solution to the Zakai equation, such as the It\^o formula and the regularity of the solution with respect to the initial datum. Finally, in Section \ref{sec: zakaikolmogorov} we state and prove the existence and uniqueness theorem for the classical solutions to the backward Kolmogorov equation associated to the Zakai equation, whilst in Section \ref{sec: kseqn} we do the same for the Kushner-Stratonovich equation.

\subsection{Notation and preliminaries}
We collect here some recurrent notations we will use during all our discussion. Regarding the space of continuous functions, we denote with $\mathrm{C}^k(\R^d)$, $k\in\N$, the space of real-valued functions over $\R^d$ which are $k$-times continuously differentiable ($k=0$ is omitted and denotes the space of continuous functions) and with $\mathrm{C}^k_\mathrm{b}(\R^d)$ the functions in $\mathrm{C}^k(\R^d)$ bounded and with bounded derivatives up to order $k$. We endow the space of continuous functions with the infinity norm, namely $\norm {u}_\infty \defeq \sup_{x\in\R^d} |u (x)|$ if $u\in \mathrm{C}(\R^d)$. If $u$ depends on several argument, $\norm{u}_\infty$ denotes the infinity norm where the supremum is taken over all the arguments. Analogously, if $u\in\mathrm{C}^k(\R^d)$ we call $\mathrm{C}^k$ norm the quantity given by $\norm{u}_{\mathrm{C}^k}\defeq \norm{u}_\infty + \sum_{i=1}^k \norm{\diff _x^i u}_\infty$
We also denote with $\mathrm{B_b}(\R^d)$ the space of Borel measurable and bounded functions over $\R^d$.\\

Let $K$ be a Borel subset of $\R^d$. We denote with $\mathcal{M}(K)$ the set of signed measures over $K$ with finite total variation, with $\mathcal{M}^+(K)$ the set of positive finite measures over $K$ and with $\prob(K)$ the set of probability measures over $K$, that is the subset of $\mathcal{M}^+(K)$ made by the measures with unitary total mass. For $p\in[1,+\infty)$, we denote the spaces of measures with finite $p$-th moment by $\mathcal{M}_p(K)$, $\mathcal{M}^+_p(K)$,$\mathcal{P}_p(K)$. More precisely,
\begin{equation*}
	\mathcal{M}_p(K)\defeq \{\mu\in\mathcal{M}(K)\colon\int_K|x|^p\mu(\de x)<\infty\},
\end{equation*}
and the same for $\mathcal{M}^+_p(K)$ and $\mathcal{P}_p(K)$. Regarding the integration of functions, we will denote by $\mu(\psi)$ or by $\scalprod{\mu}{\psi}$ the quantity $\int\psi(x)\mu(\de x)$, for a measure $\mu$ and an integrable function $\psi$. We denote the total mass given to $K$ by $\mu$ with $\mu(K)$ or $\scalprod{\mu}{\mathbf{1}}$, where $\mathbf{1}(x) = 1$ for every $x\in K$. We say that a sequence $\{\mu_n\}_{n\geq 1}$ converges weakly to a measure $\mu$ if $\mu_n(\psi)\to\mu(\psi)$ for any $\psi\in\mathrm{C_b}(K)$. We can notice that in order to have weak convergence, it is enough to check that $\mu_n(\psi)\to\mu(\psi)$ for any $\psi\in\mathrm{C^2_b}(K)$.\\

For the results in Section \ref{sec: approx}, it will be useful to introduce a distance over $\prob_p(K)$. We define the Wasserstein distance of order $p\geq 1$ between $\pi,\pi'\in\prob_p(K)$ as 
\begin{equation*}
	W_p(\pi,\pi') = \inf\left\{\left(\int_{K\times K}\!\! |x-x'|^p\gamma(\de x,\de x')\right)^{\frac{1}{p}}\!\!\!\colon \!\gamma\in\prob_p(K\times K) \text{ with marginals $\pi,\pi'$}\right\}.
\end{equation*}
To conclude, we point out that $(\prob_p(K), W_p)$ is a complete and separable metric space and that the convergence in $W_p$ implies the weak convergence stated before. A very detailed discussion on these topics can be found for instance in \cite{ambrosiogiglisavare} or from a more probabilistic point of view in \cite{carmonadelarue1}.

\section{Differential calculus on spaces of positive measures}\label{sec: 2}
Since our final goal is to discuss some Kolmogorov equations on suitable spaces of measures, we need to introduce notions of derivatives for real-valued or measure-valued functions over spaces of measures. Regarding the real-valued functions, we take inspirations from the literature recently developed for real-valued function over the space of probability measures. We give a little extension of the notion of linear functional derivative (or flat derivative) discussed for instance in \cite{cardaliaguetcirantporretta,cardaliaguetdelaruelasrylions,carmonadelarue1}. 
Another definition we need mimics the derivative introduced in the context of mean field games and discussed for instance in \cite{cardaliaguet1, carmonadelarue1}. Following \cite{cardaliaguetdelaruelasrylions}, we define it as the spatial gradient of the linear functional derivative. For probability measures, under proper hypotheses, this definition coincides with the original one given by Lions in \cite{lions} through the lifting on a Hilbert space. A discussion on the relations among these definition can be found in \cite{carmonadelarue1} in the case of probability measures or in \cite{renwang} in a more general case. The last definition we introduce is a notion of derivative of functions from $\Mp$ to $\Mp$. This is a new definition, strongly inspired by the previous ones. 
\begin{definition}[Linear functional derivative]\label{def: lfder}
	A function $u\colon\Mp\to\R$ is said to have linear functional derivative if it is continuous, bounded and if there exists a function
	\begin{equation*}
		\lf u\colon\Mp\times\R^d\ni(\mu,x)\mapsto \lf u(\mu,x)\in \R,
	\end{equation*}
	that is bounded, continuous for the product topology, $\Mp$ equipped with the weak topology, and such that for all $\mu$ and $\mu'$ in $\Mp$, it holds:
	\begin{equation}\label{eqn: fder}
		u(\mu')-u(\mu) = \int_0^1\int_{\R^d}\lf u\left(t\mu' + (1-t)\mu,x\right) [\mu'-\mu](\de x) \de t.
	\end{equation}
	We call $\mathrm {C^1}(\Mp)$ the class of functions from $\Mp$ to $\R$ that are differentiable in linear functional sense.
\end{definition}
\begin{remark}
	If $u\in \mathrm {C^1}(\Mp)$, we can introduce a notion of second-order derivative by asking that the mapping $\mu\mapsto\lf u(\mu,x)$ is differentiable in linear functional sense for every $x$ and that $(\mu,x,y)\mapsto \lf^2 u(\mu,x,y)$ is bounded and continuous. In general one can define derivatives of order $k\in\N$ and introduce the space $\mathrm {C}^k(\Mp)$ of functions that are $k$ times differentiable in linear functional sense. Notice that every time we differentiate, the derivative depends on a new spatial variable.
\end{remark}
We introduce now the second notion of derivative, namely the $L$-derivative, for real-valued functions over $\M$. We follow the definition given in \cite{cardaliaguetdelaruelasrylions}, since for positive measure we cannot rely on the lifting procedure of \cite{lions}. 
\begin{definition}[$L$-derivative]\label{def: ldiff}
	A function $u$ in $\mathrm {C^1}(\Mp)$ is said to be $L$-differentiable if, for every $\mu\in\Mp$, the mapping $\R^d\ni x\mapsto\lf u(\mu,x)\in\R$ is everywhere differentiable, with $\Mp\times\R^d\ni(\mu,x)\mapsto \diff _x\lf u (\mu,x)\in \R^d$ continuous and bounded. 
	We set:
	\begin{equation}
		\diff _\mu u(\mu,x) \defeq \diff _x\lf u(\mu, x)\in\R^d,
	\end{equation}
	and we denote this class of functions with $\mathrm{C^1_L}(\Mp)$. 
\end{definition}
\begin{remark}
	If we consider functions over $\ptwo$ (see Remark \ref{rmk: derivativeonp2}), Definition \ref{def: ldiff} turns out to coincide with the definition of $L$-derivative introduced by Lions in \cite{lions} and discussed for instance in \cite{cardaliaguet1,carmonadelarue1}. More relations with other notions of derivative in the case of signed measures have been also investigated in \cite{renwang}. 
\end{remark}
Regarding the second-order $L$-derivative, again in view of \cite{cardaliaguetdelaruelasrylions}, we give the following definition: 
\begin{definition} \label{def: c2lderivatives}
	A function $u\colon\Mp\to\R$ is said to be in $\mathrm {C^2_L}(\Mp)$ if the following conditions hold:
	\begin{itemize}
		\item[a.] $u$ is in $\mathrm {C^2}(\Mp)$; 
		\item[b.] the mapping $\R^d\ni x\mapsto\lf u(\mu,x)\in\R$ is everywhere twice differentiable, with continuous and  bounded derivatives on $\Mp\times\R^d$; 
		\item[c.] the mapping $\R^d\times\R^d\ni(x,y)\mapsto\lf^2 u (\mu,x,y)$ is twice differentiable, with continuous and  bounded derivatives on $\Mp\times\R^d\times\R^d$.
	\end{itemize}
	We define the second-order $L$-derivative of $u\in \mathrm {C^2_L}(\Mp)$ as follows:
	\begin{equation*}
		\diff _\mu^2 u(\mu,x,y) \defeq \diff _x\diff _y^\top\lf^2 u (\mu,x,y)\in\R^{d\times d},
	\end{equation*}
	where $\mathrm D_y^\top = [\partial_{y_1},\dots ,\partial _{y_d}]$ is the gradient (with respect to $y$) operator seen as a row.
\end{definition}
\begin{remark}
	In order to define the second-order $L$-derivative $\diff_\mu^2 u$, it is enough to ask for less regularity of the mappings $x\mapsto\lf u(\mu,x)$ and  $(x,y)\mapsto\lf^2 u (\mu,x,y)$ (for instance $x\mapsto\lf u(\mu,x)$ can be once continuously differentiable). However, for our scopes, it is necessary to require further regularity of these mappings and so we included it in the definition to keep the exposition clearer.	
\end{remark}
\begin{remark}\label{rmk: derivativeonp2}
	If we consider $u\colon\prob(\R^d)\to\R$, then we ask that the condition \eqref{eqn: fder} in Definition \ref{def: lfder} holds for every $\mu,\mu'\in\prob(\R^d)$. In this case we will denotes with $ \mathrm {C^1}(\prob(\R^d))$ the space of all the functions differentiable in this sense. Of course, we can proceed in the same way for the derivatives of higher-order or for the $L$-derivatives. Moreover, this also works for subsets like $\mathcal{M}^+_p(\R^d)$ and $\mathcal{P}_p(\R^d)$, $p\in[1,+\infty)$.
\end{remark}

\begin{remark}
	If we consider functions defined over the space of probability measures $\prob(\R^d)$, we have that the linear functional derivative is defined up to an additive constant (see for instance \cite[Remark 5.46]{carmonadelarue1}). A way to guarantee uniqueness (see for instance \cite[Section 2.2.1]{cardaliaguetdelaruelasrylions}) is to adopt the convention
	\begin{equation}\label{eqn: convention}
		\int_{\R^d}\lf u (\mu, x)\mu (\de x) = 0,\quad \mu\in\prob(\R^d).
	\end{equation}
\end{remark}

\begin{remark}
	We can also give the definitions of linear functional and $L$-derivative in the case of measures with compact support $\mathcal{M}^+(K)$, $K\subset\R^d$ compact with sufficiently regular boundary. In this case the additional variable generated by the differentiation belongs to $K$ and the spatial differentiability required for the $L$-derivative has to be meant only in the proper direction at the boundary of $K$.
\end{remark}
\begin{remark}
	Let $(\mathrm {B},\norm{\cdot}_{\mathrm B})$ be a Banach space and let us consider $u\colon\Mp\to \mathrm B$. Then, all the previous definitions can be trivially extended to this framework. We will use the notations $\mathrm {C^k}(\Mp; \mathrm B)$, $\mathrm {C^1_L}(\Mp; \mathrm B)$ and $\mathrm {C^2_L}(\Mp; \mathrm B)$. In this case $\lf u\colon \Mp\times\R^d\to \mathrm B$ and $\diff _\mu u\colon\Mp\times\R^d\to \mathrm{B}^d$, and the same holds for higher-order derivatives.
\end{remark}
We conclude this first part of the section by presenting an example of computations of linear functional and $L$-derivatives.
\begin{example}\label{ex: cyl}
	Let $g\colon\R^n\to\R$ be in $\mathrm {C_b^2}(\R^n)$ and let $\{\psi\}_{i=1}^n\subset \mathrm {C_b}(\R^d)$. We define 
	\begin{equation*}
		u\colon\Mp\ni\mu\mapsto g\left(\scalprod{\mu}{\psi_1},\dots,\scalprod{\mu}{\psi_n}\right)\in\R.
	\end{equation*}
	Then $u\in \mathrm {C^2}(\Mp)$ and it holds:
	\begin{align*}
		\lf u(\mu,x) &= \sum_{k=1}^n\partial_kg\left(\scalprod{\mu}{\psi_1},\dots,\scalprod{\mu}{\psi_n}\right)\psi_k(x),\\
		\lf^2 u(\mu,x,y) &= \sum_{k,l=1}^n\partial_l\partial_kg\left(\scalprod{\mu}{\psi_1},\dots,\scalprod{\mu}{\psi_n}\right)\psi_k(x)\psi_l(y).
	\end{align*}
	Moreover, if $\{\psi_i\}_{i=1}^n\subset \mathrm {C_b^2}(\R^d)$, then $u$ is in $\mathrm {C_L^2}(\Mp)$ and it holds:
	\begin{align*}
		\diff _\mu u (\mu,x) & =  \sum_{k=1}^n\partial_kg\left(\scalprod{\mu}{\psi_1},\dots,\scalprod{\mu}{\psi_n}\right)\diff _x\psi_k(x),\\
		\diff _\mu^2 u (\mu,x,y) & = \sum_{k,l=1}^n\partial_l\partial_kg\left(\scalprod{\mu}{\psi_1},\dots,\scalprod{\mu}{\psi_n}\right)\diff _x\psi_k(x)\diff _y^\top\psi_l(y).
	\end{align*}
	These functions, which we call cylindrical, play a key role in the proof of the It\^o formula in Section \ref{sec: zakai}. We will discuss in Section \ref{sec: approx} some approximation properties of this class.
\end{example}

The last definition we give concerns the differentiability for functions from $\Mp$ to $\Mp$. The idea is to ask for a relation similar to \eqref{eqn: fder} for the measure-valued function tested against regular functions.
\begin{definition}[Linear functional derivative for measure-valued functions]\label{def: lfdermeas}
	We say that a function $m\colon\Mp\to\Mp$ is differentiable in linear functional sense if there exists a mapping
	\begin{equation*}
		\lfm m\colon\Mp\times\R^d\ni(\mu,x)\mapsto \lfm m(\mu,x)\in \Mp,
	\end{equation*}
	bounded in total variation, continuous for the product topology, $\Mp$ equipped with the weak topology, and such that for all $\mu$ and $\mu'$ in $\Mp$, it holds:			
	\begin{equation}\label{eqn: fderm}
		\scalprod{m(\mu')-m(\mu)}{\psi} = \int_0^1\int_{\R^d}\scalprod{\lfm m\left(t\mu' + (1-t)\mu, x\right)}{\psi} [\mu'-\mu](\de x) \de t,
	\end{equation}
	for every $\psi\in \mathrm {C_b}(\R^d)$.
	We call $\tilde{\mathrm C}^1(\Mp)$ the class of functions from $\Mp$ to $\Mp$ that are differentiable in linear functional sense and, analogously, we denote by $\tilde{\mathrm C}^k(\Mp)$ the space of functions that are $k$ times differentiable.
\end{definition}
\begin{remark}
	The joint continuity required in Definition \ref{def: lfdermeas} implies that, for every fixed $\mu\in\Mp$, the mapping $x\mapsto\lfm m (\mu,x)$ is measurable and so $\lfm u$ is a transition kernel.
\end{remark}

\begin{remark}\label{rmk: lflfm}
	It easy to see that if $m\colon\Mp\to\Mp$ is in $\mathrm{\tilde C^1}(\Mp)$ then the mapping $\mu\mapsto\scalprod{m(\mu)}{\psi}$ is in $\mathrm{C^1}(\Mp)$ for every $\psi\in\mathrm{C_b}(\R^d)$. In particular, $\scalprod{\lfm m (\mu, x)}{\psi} = \lf (\scalprod{m(\cdot)}{\psi})(\mu,x)$ for every $\psi\in\mathrm{C_b}(\R^d)$, $\mu\in\Mp$ and $x\in\R^d$. The converse is also true if we assume that the regularity of the mapping $\mu\mapsto\scalprod{m(\mu)}{\psi}$ is uniform with respect to $\psi$.
\end{remark}

\begin{example}
	Let us consider $\rho\in\mathrm{C_b}(\R^d;[0,+\infty))$, and let us define $m_\rho\colon\Mp\to\Mp$ as
	\begin{equation*}
		\scalprod{m_\rho(\mu)}{\psi} = \scalprod{\rho\mu}{\psi} = \int_{\R^d}\psi(x)\rho(x)\mu(\de x),\quad\psi\in\mathrm{C_b}(\R^d).
	\end{equation*}
	From Definition \ref{def: lfdermeas} it holds that $\lfm m (\mu,x) = \rho(x)\delta_x$, where $\delta_x$ is the Dirac measure in $x\in\R^d$. Moreover, for any $\mu\in\Mp$, $x,y\in\R^d$, we have $\lfm^2 m (\mu,x,y) = 0$.
\end{example}

\begin{example}
	Let us consider $f\in \mathrm{C_b}(\R^d;\R^d)$ and let us define the mapping $m = m(\mu)$ as the push-forward measure through $f$, namely
		$\Mp\ni\mu\mapsto m(\mu) = f_\#\mu.$
	We recall that for every $\psi\in \mathrm{C_b}(\R^d)$ it holds
	\begin{equation*}
		\int_{\R^d}\psi(y)f_\#\mu(\de y) = \int_{\R^d}\psi(f(x))\mu(\de x).
	\end{equation*}
	Thus, we have
	 $\lfm m(\mu,x) = \delta_{f(x)}$, where $\delta_{f(x)}$ is the Dirac measure centered in $f(x)$, $x\in\R^d$.
\end{example}

\subsection{Some properties}\label{ssec: someprop}
We list here some properties which will be required in the next section and that help us to understand how these derivatives can be combined.
A first property we need concerns the symmetry of the second-order derivatives.
\begin{proposition}\label{prop: sim2der}
	Let $u\colon\Mp\to\R$ be of class $ \mathrm {C^2_L}(\Mp)$. The following facts hold:
	\begin{enumerate}
		\item[i.] the second-order linear functional derivative is symmetric in the spatial arguments, that is $\lf^2 u (\mu,x,y) = \lf^2 u (\mu,y,x)$ for every $x,y\in\R^d$ and $\mu\in\Mp$;
		\item[ii.] $\mathrm {D}_x(\lf^2 u (\mu,x,y) ) = \lf \left(\diff _\mu u(\cdot,x)\right)(\mu,y)$;
		\item[iii.] $\mathrm {D}_\mu (\diff _\mu u(\cdot, x))(\mu,y) = \diff ^2_\mu u(\mu,x,y)$.
	\end{enumerate}
\end{proposition}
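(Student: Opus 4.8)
The plan is to obtain the two differentiation identities ii.\ and iii.\ almost directly from the defining relations, while the symmetry i.\ is proved by reduction to the classical Schwarz theorem; this last point is where the real work lies.

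For i.\ I fix $\mu\in\Mp$ and $a,b\in\R^d$ and exploit that $\Mp$ is a convex cone, so that $\mu+s\delta_a+t\delta_b\in\Mp$ for every $s,t\ge0$. Setting
\begin{equation*}
	\psi(s,t)\defeq u(\mu+s\delta_a+t\delta_b),
\end{equation*}
I would first establish, from Definition \ref{def: lfder} together with the boundedness and joint continuity of $\lf u$ and $\lf^2 u$, the chain rule along the rays of the cone, which gives
\begin{equation*}
	\partial_t\psi(s,t)=\lf u(\mu+s\delta_a+t\delta_b,b),\qquad
	\partial_s\partial_t\psi(s,t)=\lf^2 u(\mu+s\delta_a+t\delta_b,b,a),
\end{equation*}
and, symmetrically, $\partial_t\partial_s\psi(s,t)=\lf^2 u(\mu+s\delta_a+t\delta_b,a,b)$. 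Since these mixed derivatives exist and are continuous on the open quadrant $\{s,t>0\}$, Schwarz's theorem forces them to coincide there; letting $(s,t)\to(0,0)$ and using the continuity of $\lf^2 u$ in the weak topology yields $\lf^2 u(\mu,a,b)=\lf^2 u(\mu,b,a)$. As $\mu,a,b$ are arbitrary, i.\ follows.

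For ii.\ I would differentiate in the spatial variable $x$ the defining relation of the second-order linear functional derivative,
\begin{equation*}
	\lf u(\mu',x)-\lf u(\mu,x)=\int_0^1\!\int_{\R^d}\lf^2 u\bigl(t\mu'+(1-t)\mu,x,y\bigr)\,[\mu'-\mu](\de y)\,\de t.
\end{equation*}
The continuity and boundedness of $\diff_x\lf^2 u$ furnished by Definition \ref{def: c2lderivatives} justify interchanging $\diff_x$ with the $t$- and $y$-integrals, so that
\begin{equation*}
	\diff_\mu u(\mu',x)-\diff_\mu u(\mu,x)=\int_0^1\!\int_{\R^d}\diff_x\lf^2 u\bigl(t\mu'+(1-t)\mu,x,y\bigr)\,[\mu'-\mu](\de y)\,\de t,
\end{equation*}
which is exactly the statement that $y\mapsto\diff_x\lf^2 u(\mu,x,y)$ is the linear functional derivative of the $\R^d$-valued map $\mu\mapsto\diff_\mu u(\mu,x)$, i.e.\ $\lf(\diff_\mu u(\cdot,x))(\mu,y)=\diff_x\lf^2 u(\mu,x,y)$, which is ii. Assertion iii.\ is then obtained by taking the spatial gradient in $y$: applying the definition of the $L$-derivative componentwise to $\diff_\mu u(\cdot,x)$ and inserting ii.\ gives
\begin{equation*}
	\diff_\mu(\diff_\mu u(\cdot,x))(\mu,y)=\diff_y\,\diff_x\lf^2 u(\mu,x,y),
\end{equation*}
and this equals $\diff_\mu^2 u(\mu,x,y)=\diff_x\diff_y^\top\lf^2 u(\mu,x,y)$ once the ordinary mixed partials in $x$ and $y$ are exchanged by Schwarz's theorem, which is legitimate because $(x,y)\mapsto\lf^2 u(\mu,x,y)$ is twice continuously differentiable by condition c.\ of Definition \ref{def: c2lderivatives}.

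I expect i.\ to be the main obstacle. The delicate points there are the rigorous derivation of the chain rule along the rays $t\mapsto\mu_0+t\delta_b$ inside the cone (using the very definition of the linear functional derivative and dominated convergence), and the fact that at the corner $(0,0)$ only one-sided derivatives are available, which forces me to apply Schwarz on the open quadrant and recover the identity at $\mu$ by a continuity/limit argument. By contrast, ii.\ and iii.\ reduce to differentiation under the integral sign and to unwinding the definitions, the only care being the interchange of $\diff_x$ with the integrals and the matrix (transpose) conventions entering $\diff_\mu^2 u$.
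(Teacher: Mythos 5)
Your proof is correct and is essentially the argument the paper relies on: the paper's proof simply defers to \cite[Lemma 2.2.4]{cardaliaguetdelaruelasrylions}, whose mechanism is exactly your reduction of i.\ to the classical Schwarz theorem along a two-parameter family, plus differentiation under the integral sign in the defining relation for ii.\ and the exchange of ordinary mixed partials for iii. Your perturbation by Dirac masses $\mu+s\delta_a+t\delta_b$, which is admissible on the cone $\Mp$ though not on $\prob(\R^d)$, is precisely the ``no relevant modification'' adaptation and explains why the correction terms of the probability case disappear, in agreement with the remark following the proposition.
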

\begin{proof}
	The proof follows the one of \cite[Lemma 2.2.4]{cardaliaguetdelaruelasrylions}, without relevant modifications in the argument.
\end{proof}
\begin{remark}
	Proposition \ref{prop: sim2der} allows us to characterize the second-order $L$-derivative as the $L$-derivative of the mapping $\mu\mapsto \diff _\mu u(\mu,x)$, for every fixed $x\in\R^d$, as we do for the linear functional case. We also notice that the first property in Proposition \ref{prop: sim2der} holds more generally when $u\in \mathrm {C^2}(\Mp)$.
\end{remark}

\begin{remark}
	If we consider functions over $\prob(\R^d)$, it is necessary to add an additional correction to the Schwarz-type identity $i.$ in Proposition \ref{prop: sim2der} (see \cite[Lemma 2.2.4]{cardaliaguetdelaruelasrylions}). In particular it holds that 
	\begin{equation*}
		\lf^2 u(\mu,x,y) = \lf^2 u(\mu,y,x) + \lf u (\mu,x) - \lf u (\mu, y),\quad\mu\in\prob(\R^d),x,y\in\R^d.
	\end{equation*} 
	We can notice that the correction terms disappear if we integrate with respect to $\mu\otimes\mu$.
\end{remark}

The next proposition shows how a to compute the linear functional derivative of the composition of a function from $\R$ to $\R$ and a function from $\Mp$ to $\R$, by a chain rule similar to the classical one.
\begin{proposition}\label{prop: comp}
	Let  $h\in\mathrm{C^1_b}(\R)$ and $g\in\mathrm{C^1}(\Mp)$. Then the composition
	\begin{equation*}
		\Mp\ni\mu\mapsto h(g(\mu))\in\R
	\end{equation*}
	is in  $\mathrm{C^1}(\Mp)$ and the following chain rule holds:
	\begin{equation*}
		\lf h(g(\cdot))(\mu,x) = h'(g(\mu))\lf g(\mu,x),\quad x\in\R^d,\mu\in\Mp.
	\end{equation*}
\end{proposition}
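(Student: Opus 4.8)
The plan is to verify the defining identity \eqref{eqn: fder} for the candidate derivative $\lf h(g(\cdot))(\mu,x) = h'(g(\mu))\lf g(\mu,x)$ directly, reducing the composite statement to the one-dimensional fundamental theorem of calculus combined with the linear functional derivative of $g$. First I would fix $\mu,\mu'\in\Mp$ and introduce the interpolation $\mu_t \defeq t\mu' + (1-t)\mu$ for $t\in[0,1]$. The key observation is that the scalar function $t\mapsto g(\mu_t)$ is differentiable with derivative computable via the linear functional derivative of $g$: since $g\in\mathrm{C^1}(\Mp)$, one shows that $\frac{\de}{\de t}g(\mu_t) = \int_{\R^d}\lf g(\mu_t,x)\,[\mu'-\mu](\de x)$. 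This identity follows from applying \eqref{eqn: fder} to $g$ between nearby points $\mu_{t+\varepsilon}$ and $\mu_t$, dividing by $\varepsilon$, and passing to the limit using the continuity and boundedness of $\lf g$.

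Next I would apply the classical chain rule in one variable: since $h\in\mathrm{C^1_b}(\R)$ and $t\mapsto g(\mu_t)$ is $\mathrm{C}^1$ on $[0,1]$, the map $t\mapsto h(g(\mu_t))$ is $\mathrm{C}^1$ with $\frac{\de}{\de t}h(g(\mu_t)) = h'(g(\mu_t))\frac{\de}{\de t}g(\mu_t)$. Integrating from $0$ to $1$ and using the fundamental theorem of calculus gives
\begin{equation*}
	h(g(\mu'))-h(g(\mu)) = \int_0^1 h'(g(\mu_t))\left(\int_{\R^d}\lf g(\mu_t,x)\,[\mu'-\mu](\de x)\right)\de t.
\end{equation*}
Rewriting the inner expression as $\int_{\R^d} h'(g(\mu_t))\lf g(\mu_t,x)\,[\mu'-\mu](\de x)$ and recognizing the integrand as the proposed candidate $\lf h(g(\cdot))(\mu_t,x)$ evaluated along the interpolation yields exactly \eqref{eqn: fder} for the composition, which is the desired formula.

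It then remains to check that the candidate $\lf h(g(\cdot))$ meets the regularity demanded by Definition \ref{def: lfder}, namely boundedness and joint continuity. Boundedness is immediate because $h'$ is bounded (as $h\in\mathrm{C^1_b}(\R)$) and $\lf g$ is bounded by hypothesis. Joint continuity for the product topology follows from the continuity of $h'$, the continuity of $\mu\mapsto g(\mu)$ (which holds since $g\in\mathrm{C^1}(\Mp)$ is continuous), and the joint continuity of $(\mu,x)\mapsto\lf g(\mu,x)$; the product of continuous functions is continuous. Finally, the continuity and boundedness of $h\circ g$ itself follow from those of $g$ and $h$, so $h\circ g\in\mathrm{C^1}(\Mp)$ as claimed.

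I expect the only delicate point to be the justification of the differentiability of $t\mapsto g(\mu_t)$ and the interchange of limits needed to identify its derivative. Everything else is a routine application of the one-dimensional chain rule and the fundamental theorem of calculus. The main obstacle is therefore to argue carefully that the difference quotient $\frac{1}{\varepsilon}\left(g(\mu_{t+\varepsilon})-g(\mu_t)\right)$ converges to $\int_{\R^d}\lf g(\mu_t,x)\,[\mu'-\mu](\de x)$, which one obtains by applying \eqref{eqn: fder} to $g$ on the segment joining $\mu_t$ and $\mu_{t+\varepsilon}$ (itself a reparametrized subsegment of the original interpolation) and invoking dominated convergence together with the joint continuity of $\lf g$.
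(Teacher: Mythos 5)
Your proposal is correct and follows essentially the same route as the paper's proof: interpolate along $\mu_t = t\mu' + (1-t)\mu$, identify $\frac{\de}{\de t}g(\mu_t) = \int_{\R^d}\lf g(\mu_t,x)\,[\mu'-\mu](\de x)$ via difference quotients, dominated convergence and the joint continuity and boundedness of $\lf g$, and then conclude by the one-dimensional chain rule and the fundamental theorem of calculus. Your closing verification that the candidate derivative is bounded and jointly continuous (so that Definition \ref{def: lfder} is fully satisfied) is a step the paper leaves implicit, and it is a welcome addition.
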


\begin{proof}
	For every $\mu,\mu'\in \Mp$ we have
	\begin{equation*}
		h(g(\mu')) - h(g(\mu)) = \int_0^1 \frac{\partial}{\partial t}h(g(\mu_t))\de t = \int_0^1 h'\left(g(\mu_t))\right)\frac{\partial}{\partial t}g(\mu_t)\de t,
	\end{equation*}
	where $\mu_t = t\mu'+(1-t)\mu$. If we show that
	\begin{equation*}
		\frac{\partial}{\partial t}g(\mu_t) = \int_{\R^d}\lf g(\mu_t, x)[\mu'-\mu](\de x),
	\end{equation*}
	then we are done. For $h$ fixed, since $g\in\mathrm{C}^1(\Mp)$ we can compute the increment
	\begin{align*}
		\frac{1}{h}\left(g(\mu_{t+h}) - g(\mu_{t})\right) = \frac{1}{h}\int_{0}^1\lf g(\tau h \mu' - \tau h\mu +\mu_t, x) h[\mu'-\mu](\de x)\de \tau.
	\end{align*}
	Then by taking $h\to0$ we get $\frac{\partial}{\partial t}g(\mu_t) = \int_{\R^d}\lf g(\mu_t, x)[\mu'-\mu](\de x)$, where we used the dominated convergence theorem and the joint continuity and boundedness of $\lf g$.
\end{proof}
\begin{remark}
	An easy generalization holds for $h(g_1(\mu),\dots,g_n(\mu))$, where $n\in\N$, $h\in\mathrm{C^1_b}(\R^n)$ and $\{g_i\}_{i=1}^n\subset\mathrm{C^1}(\Mp)$. Then $h\in\mathrm{C^1}(\Mp)$ and it holds:
	\begin{equation*}
		\lf h(g_1(\cdot),\dots,g_n(\cdot))(\mu,x) = \sum_{i = 1}^n \partial_ih(g_1(\mu),\dots,g_n(\mu))\lf g_i(\mu,x),\quad x\in\R^d,\mu\in\Mp.
	\end{equation*}
\end{remark}
Another natural result we would like to have is the chain rules for the composition between functions from $\Mp$ to $\R$ and from $\Mp$ to $\Mp$.
\begin{proposition}\label{prop: complflfm} 
Let $m\in\mathrm{\tilde{C}^1}(\Mp)$ and let $g\in\mathrm{C^1}(\Mp)$. Then the composition mapping $\Mp\ni\mu\mapsto g(m(\mu))\in\R$ is in $\mathrm{C^1}(\Mp)$ and it holds:
	\begin{equation*}
		\lf g(m(\cdot))(\mu,x) = \scalprod{\lfm m (\mu,x)}{\lf g(\cdot)(m(\mu))}.
	\end{equation*}
	Moreover, if $m\in\mathrm{\tilde{C}^2}(\Mp)$ and $g\in\mathrm{C^2}(\Mp)$, then $g(m(\cdot))\in\mathrm{C^2}(\Mp)$ with
	\begin{equation*}
		\lf^2 g(m(\cdot))(\mu,x,y) = \scalprod{\lfm^2 m (\mu,x,y)}{\lf g(\cdot)(m(\mu))} + \scalprod{\lfm m (\mu,x)\otimes \lfm m (\mu,y)}{\lf^2 g(\cdot)(m(\mu))}.
	\end{equation*}
\end{proposition}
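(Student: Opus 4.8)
The plan is to imitate the argument in the proof of Proposition \ref{prop: comp}, with the inner scalar function replaced by the measure-valued map $m$, while keeping careful track of the fact that the increment of $m$ is now itself a signed measure depending on the step size. Fix $\mu,\mu'\in\Mp$ and set $\mu_t\defeq t\mu'+(1-t)\mu$. Boundedness and continuity of $\mu\mapsto g(m(\mu))$ are immediate: $g$ is bounded, and if $\mu_n\to\mu$ weakly then $\scalprod{m(\mu_n)}{\psi}\to\scalprod{m(\mu)}{\psi}$ for every $\psi\in\mathrm{C_b}(\R^d)$ by Remark \ref{rmk: lflfm} (the map $\mu\mapsto\scalprod{m(\mu)}{\psi}$ being in $\mathrm{C^1}(\Mp)$, hence continuous), so $m(\mu_n)\to m(\mu)$ weakly and $g(m(\mu_n))\to g(m(\mu))$ by continuity of $g$.

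The core of the first part is to establish
\[
g(m(\mu'))-g(m(\mu))=\int_0^1\frac{\partial}{\partial t}g(m(\mu_t))\,\de t,\qquad \frac{\partial}{\partial t}g(m(\mu_t))=\int_{\R^d}\scalprod{\lfm m(\mu_t,x)}{\lf g(m(\mu_t),\cdot)}[\mu'-\mu](\de x).
\]
To prove the second identity I would expand the increment $g(m(\mu_{t+h}))-g(m(\mu_t))$ using the very definition of $\lf g$ applied to the pair $m(\mu_{t+h}),m(\mu_t)$, and then split the result into a main term tested against the \emph{fixed} function $\lf g(m(\mu_t),\cdot)$ plus a remainder. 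The main term is $\tfrac1h\scalprod{m(\mu_{t+h})-m(\mu_t)}{\lf g(m(\mu_t),\cdot)}$; since the test function no longer depends on $h$, it converges to $\int_{\R^d}\scalprod{\lfm m(\mu_t,x)}{\lf g(m(\mu_t),\cdot)}[\mu'-\mu](\de x)$ exactly as in Proposition \ref{prop: comp}, via Remark \ref{rmk: lflfm} and dominated convergence against the fixed signed measure $[\mu'-\mu]$. The candidate derivative $\scalprod{\lfm m(\mu,x)}{\lf g(m(\mu),\cdot)}$ is bounded (by $\sup\norm{\lfm m}_{\mathrm{TV}}\cdot\norm{\lf g}_\infty$) and jointly continuous, so it qualifies as $\lf(g\circ m)$, and Fubini then yields the claimed formula.

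The key new ingredient, and the step I expect to be the \emph{main obstacle}, is the control of the remainder. Here I would first record the total-variation Lipschitz bound
\[
\norm{m(\mu_{t+h})-m(\mu_t)}_{\mathrm{TV}}\le C\,|h|\,\norm{\mu'-\mu}_{\mathrm{TV}},
\]
obtained by testing the identity $\scalprod{m(\mu_{t+h})-m(\mu_t)}{\psi}=\int_t^{t+h}\!\int_{\R^d}\scalprod{\lfm m(\mu_s,x)}{\psi}[\mu'-\mu](\de x)\,\de s$ (Remark \ref{rmk: lflfm}) and using that $\lfm m$ is bounded in total variation. This makes the normalized increments $\tfrac1h[m(\mu_{t+h})-m(\mu_t)]$ a family of signed measures of uniformly bounded total variation, so that the remainder is dominated by $C\norm{\mu'-\mu}_{\mathrm{TV}}\sup_{\theta\in[0,1]}\norm{\lf g(\theta m(\mu_{t+h})+(1-\theta)m(\mu_t),\cdot)-\lf g(m(\mu_t),\cdot)}_\infty$. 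The interpolated measures converge weakly to $m(\mu_t)$, so by joint continuity of $\lf g$ each such difference tends to zero pointwise; upgrading this to the convergence actually needed against the $h$-dependent increments is the delicate point, and I expect to combine the total-variation bound with a tightness argument for the family of increments to close it.

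For the second-order statement I would differentiate the first-order formula $F(\mu,x)\defeq\lf g(m(\cdot))(\mu,x)=\scalprod{\lfm m(\mu,x)}{\lf g(m(\mu),\cdot)}$ in $\mu$ by a Leibniz rule, which amounts to applying the first-order chain rule twice. For fixed $z$, the scalar map $\mu\mapsto\lf g(m(\mu),z)$ is the composition of $m$ with $\lf g(\cdot,z)\in\mathrm{C^1}(\Mp)$, so the first part gives $\lf(\lf g(m(\cdot),z))(\mu,y)=\scalprod{\lfm m(\mu,y)}{\lf^2 g(m(\mu),z,\cdot)}$; meanwhile $\mu\mapsto\lfm m(\mu,x)$ lies in $\mathrm{\tilde{C}^1}(\Mp)$ with derivative $\lfm^2 m(\mu,x,y)$. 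Differentiating the product $\scalprod{\lfm m(\mu,x)}{\lf g(m(\mu),\cdot)}$ then yields precisely the two claimed terms, $\scalprod{\lfm^2 m(\mu,x,y)}{\lf g(m(\mu),\cdot)}$ from the variation of the kernel $\lfm m(\mu,x)$ and $\scalprod{\lfm m(\mu,x)\otimes\lfm m(\mu,y)}{\lf^2 g(m(\mu),\cdot,\cdot)}$ from the variation of the integrand. The remaining work is to justify this product rule for linear functional derivatives (again by the interpolation argument above) and to verify boundedness and joint continuity of the two resulting terms, so that together they genuinely constitute $\lf^2(g\circ m)$.
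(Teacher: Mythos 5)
Your proposal is correct and follows essentially the same route as the paper, whose entire proof consists of the decomposition $g(m(\mu'))-g(m(\mu))=\int_0^1\frac{\partial}{\partial t}g(m(\mu_t))\,\de t$ followed by the chain-rule identity for $\frac{\partial}{\partial t}g(m(\mu_t))$, with the second-order claim dispatched by ``in the same way'' (your Leibniz-rule derivation of it is just Proposition \ref{prop: prodrulelf} combined with the first-order statement, i.e.\ what the paper alludes to). The one point you flag as delicate --- controlling the remainder when both the increment measure $\frac{1}{h}[m(\mu_{t+h})-m(\mu_t)]$ and the interpolated argument of $\lf g$ depend on $h$ --- is precisely what the paper subsumes under ``by the regularity of $g$ and $m$'', and your closure via the total-variation Lipschitz bound together with tightness of the increment family (which follows from weak compactness of the segment, joint continuity of $\lfm m$, and tightness of $|\mu'-\mu|$) is a legitimate way to complete it.
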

\begin{proof}
	For every $\mu,\mu'\in \Mp$ we have
	\begin{equation*}
		g(m(\mu')) - g(m(\mu)) = \int_0^1 \frac{\partial}{\partial t}g(m(\mu_t))\de t ,
	\end{equation*}
	where $\mu_t = t\mu'+(1-t)\mu$. Then, by the regularity of $g$ and $m$ follows that
	\begin{equation}\label{eqn: 1}
		\frac{\partial}{\partial t}g(m(\mu_t)) = \int_{\R^d}\scalprod{\lfm m(\mu_t, x)}{\lf g(\cdot)(m(\mu_t))}[\mu'-\mu](\de x),
	\end{equation}
	 and so the thesis. In the same way, one can deduce the result for the second-order derivative.
\end{proof}

Finally, we state a proposition regarding the differentiation of products. We omit the proof since it is analogue to the two above.
\begin{proposition}\label{prop: prodrulelf}
	Let $f,g\in\mathrm{C^1}(\Mp)$. Then the product map 
	\begin{equation*}
		\Mp\ni\mu\mapsto f(\mu)g(\mu)\in\R 
	\end{equation*} is in $\mathrm{C^1}(\Mp)$ and the following product rule holds:
	\begin{equation*}
		(\lf [fg]) (\mu,x) = f(\mu)\lf g(\mu,x) + g(\mu)\lf f(\mu,x). 
	\end{equation*}
	Moreover, if $m\in\mathrm{\tilde{C}^1}(\Mp)$ and if $\psi\colon\R^d\times\Mp\to\R$ is bounded, of class $\mathrm{C^1}(\Mp)$ in the measure argument and continuous in the spatial argument, then the mapping
	\begin{equation*}
		\mu\ni\Mp\mapsto h(\mu)\defeq \scalprod{m(\mu)}{\psi(\cdot,\mu)}\in\R
	\end{equation*}
	is in $\mathrm{C^1}(\Mp)$ and it holds
	\begin{equation*}
		\lf h(\mu,x) = \scalprod{\lfm m (\mu,x)}{\psi(\cdot,\mu)} + \scalprod{m(\mu)}{\lf\psi(\cdot,\mu,x)}.
	\end{equation*}
\end{proposition}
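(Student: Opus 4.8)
The plan is to follow the blueprints of Propositions \ref{prop: comp} and \ref{prop: complflfm}: I fix $\mu,\mu'\in\Mp$, set $\mu_t\defeq t\mu'+(1-t)\mu$ for $t\in[0,1]$, and differentiate the relevant scalar quantity along this segment. For the product rule I would write
\begin{equation*}
f(\mu')g(\mu')-f(\mu)g(\mu)=\int_0^1\frac{\partial}{\partial t}\left[f(\mu_t)g(\mu_t)\right]\de t
\end{equation*}
and apply the ordinary one-dimensional product rule in $t$. By the computation carried out in the proof of Proposition \ref{prop: comp}, for any $\mathrm{C^1}(\Mp)$ function one has $\frac{\partial}{\partial t}f(\mu_t)=\int_{\R^d}\lf f(\mu_t,x)[\mu'-\mu](\de x)$, and likewise for $g$; both derivatives are continuous in $t$, so $t\mapsto f(\mu_t)g(\mu_t)$ is indeed $\mathrm{C}^1$. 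Substituting and absorbing the scalar factors into the respective integrals gives
\begin{equation*}
\frac{\partial}{\partial t}\left[f(\mu_t)g(\mu_t)\right]=\int_{\R^d}\left[f(\mu_t)\lf g(\mu_t,x)+g(\mu_t)\lf f(\mu_t,x)\right][\mu'-\mu](\de x),
\end{equation*}
and integrating in $t$ identifies the candidate derivative. That $(\mu,x)\mapsto f(\mu)\lf g(\mu,x)+g(\mu)\lf f(\mu,x)$ is bounded and jointly continuous is immediate, being assembled from bounded, jointly continuous factors, so $fg\in\mathrm{C^1}(\Mp)$.

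For the second statement I would proceed along the same segment, writing $h(\mu')-h(\mu)=\int_0^1\frac{\partial}{\partial t}\scalprod{m(\mu_t)}{\psi(\cdot,\mu_t)}\de t$. Here $\mu_t$ enters both through the measure $m(\mu_t)$ and through the integrand $\psi(\cdot,\mu_t)$, so I split the difference quotient by adding and subtracting a cross term:
\begin{equation*}
\frac{1}{s}\left[\scalprod{m(\mu_{t+s})}{\psi(\cdot,\mu_{t+s})}-\scalprod{m(\mu_t)}{\psi(\cdot,\mu_t)}\right]=\frac{1}{s}\scalprod{m(\mu_{t+s})-m(\mu_t)}{\psi(\cdot,\mu_{t+s})}+\frac{1}{s}\scalprod{m(\mu_t)}{\psi(\cdot,\mu_{t+s})-\psi(\cdot,\mu_t)}.
\end{equation*}
In the second term, since $\psi$ is of class $\mathrm{C^1}(\Mp)$ in the measure argument, the difference quotient converges pointwise to $\int_{\R^d}\lf\psi(\cdot,\mu_t,x)[\mu'-\mu](\de x)$, and integrating against the fixed finite measure $m(\mu_t)$ yields $\int_{\R^d}\scalprod{m(\mu_t)}{\lf\psi(\cdot,\mu_t,x)}[\mu'-\mu](\de x)$ by dominated convergence.

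The main obstacle is the first term, where the increment of $m$ is tested against the moving function $\psi(\cdot,\mu_{t+s})$. Using Definition \ref{def: lfdermeas} along the subsegment from $\mu_t$ to $\mu_{t+s}$ (on which $r\mu_{t+s}+(1-r)\mu_t=\mu_{t+rs}$ and $\mu_{t+s}-\mu_t=s(\mu'-\mu)$), I rewrite
\begin{equation*}
\frac{1}{s}\scalprod{m(\mu_{t+s})-m(\mu_t)}{\psi(\cdot,\mu_{t+s})}=\int_0^1\int_{\R^d}\scalprod{\lfm m(\mu_{t+rs},x)}{\psi(\cdot,\mu_{t+s})}[\mu'-\mu](\de x)\de r.
\end{equation*}
As $s\to0$ one has $\mu_{t+rs}\to\mu_t$ uniformly in $r$ and $\psi(\cdot,\mu_{t+s})\to\psi(\cdot,\mu_t)$ by continuity of $\psi$ in the measure variable; combined with the joint continuity and total-variation boundedness of $\lfm m$ and the boundedness of $\psi$ required in Definition \ref{def: lfdermeas}, dominated convergence then gives the limit $\int_{\R^d}\scalprod{\lfm m(\mu_t,x)}{\psi(\cdot,\mu_t)}[\mu'-\mu](\de x)$. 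This fusion of the two convergences — the difference quotient of $m$ and the simultaneous motion of the test function — is precisely the delicate point not present in Propositions \ref{prop: comp} and \ref{prop: complflfm}. Adding the two limits yields
\begin{equation*}
\frac{\partial}{\partial t}h(\mu_t)=\int_{\R^d}\left[\scalprod{\lfm m(\mu_t,x)}{\psi(\cdot,\mu_t)}+\scalprod{m(\mu_t)}{\lf\psi(\cdot,\mu_t,x)}\right][\mu'-\mu](\de x),
\end{equation*}
and integrating in $t$ gives the stated formula; the boundedness and joint continuity of the right-hand side, hence $h\in\mathrm{C^1}(\Mp)$, follow as in the first part.
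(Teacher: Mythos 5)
Your proposal is correct and follows exactly the route the paper indicates: the paper omits the proof of this proposition, stating only that it is ``analogue to the two above'' (Propositions \ref{prop: comp} and \ref{prop: complflfm}), and your argument is precisely that segment-differentiation blueprint, carried out in full --- including the splitting of the difference quotient for $h$ and the dominated-convergence passage where the increment of $m$ is tested against the moving function $\psi(\cdot,\mu_{t+s})$, which is indeed the only step not already contained verbatim in those two proofs. In short, you have supplied the details the paper deliberately left out, at a level of rigor matching (in fact slightly exceeding) the paper's own treatment of Proposition \ref{prop: complflfm}.
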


\section{Approximation of real-valued functions over the space of positive measures}\label{sec: approx}
Here we discuss how to approximate real-valued functions over $\Mp$ with a class of simpler functions, which allows easier and explicit computations. 
Our technique is based on a construction which is well known for function over space of probability measures, in particular on $\ptwo$ endowed with the Wasserstein metric. Given $\mu\in\ptwo$, we can introduce its empirical approximation 
\begin{equation}
	\mu_n = \frac{1}{n}\sum_{i=1}^n\delta_{X_i},\quad n\geq 1,
\end{equation} 
where $\{X_i\}_{i=1}^n$ are independent identically distributed (i.i.d.) random variables with law $\mu$ (over an arbitrary probability space $\spprob$). It can be shown, see for instance \cite[Section 5.1.2]{carmonadelarue1}, that $W_2(\mu,\mu_n)\to 0$ almost surely and in $L^2(\Omega)$ as $n\to+\infty$.
If $u\colon\ptwo\to\R$, we can introduce its empirical projection $u^n(\mu)\defeq u(\mu_n)$ and if $u$ is bounded and continuous with respect to $W_2$ one can conclude that, for every $\mu\in\ptwo$, 
\begin{equation}\label{eqn: appcd}
	\E{u^n(\mu)} = \E{u(\mu_n)} = \scalprod{\mu^{\times n}}{u\left(\frac{1}{n}\sum_{i=1}^n\delta_{\cdot_i}\right)}\to u(\mu),\quad n\to+\infty,
\end{equation}
where we used the notation
\begin{equation*}
	\scalprod{\mu^{\times n}}{u\left(\frac{1}{n}\sum_{i=1}^n\delta_{\cdot_i}\right)} = \int_{\R^{dn}} u\left(\frac{1}{n}\sum_{i=1}^n\delta_{x_i}\right) \mu(\de x_1)\dots \mu(\de x_n).
\end{equation*}
We can set $\phi^n(\mu) = \scalprod{\mu^{\times n}}{u\left(\frac{1}{n}\sum_{i=1}^n\delta_{\cdot_i}\right)}$ and thus the family $\{\phi^n\}_{n\geq1}$ approximates pointwise $u$.\\

The goal of our approximation technique is to find a class of functions with good properties that allows us to approximate functions over positive measures together with their derivatives, when they exist.
The first step in our procedure is to adapt the previous argument to a space of finite positive measures.
Let us introduce, for $k> 1$, $\mathcal{M}_{2,k}^+(\R^d)\defeq\{\mu\in\mathcal{M}_2^+(\R^d)\colon \mu(\R^d) \in \left[\frac{1}{k},k\right]\}$. In the end, we will be able to approximate a function $u\colon\mathcal{M}_{2,k}^+(\R^d)\to\R$, where $k>1$ is fixed, in a way that allow us to approximate also its derivatives, when $u\in\mathrm {C_L^2}(\mathcal{M}^+_{2,k}(\R^d))$.  We can define, for every $\mu\in\mathcal{M}_{2,k}^+(\R^d)$,
\begin{equation}
	u^n(\mu) \defeq u\left(\frac{\mu(\R^d)}{n}\sum_{i=1}^n\delta_{X_i}\right),\quad n\geq1,
\end{equation}
where $\{X_i\}_{i=1}^n$ are i.i.d. random variables with law $\mu/\mu(\R^d)$. If we fix $\mu\in\mathcal{M}_{2,k}^+(\R^d)$ and we ask the mapping $\ptwo\ni\pi\mapsto u\left(\mu(\R^d)\pi\right)\in\R$ to be bounded and continuous in $W_2$, then we can conclude, as for \eqref{eqn: appcd}, that
\begin{equation}\label{eqn: approxcilgen}
	\phi^n(\mu)\defeq\E{u^n(\mu)} = \frac{1}{\mu(\R^d)^n}\scalprod{\mu^{\times n}}{u\left(\frac{\mu(\R^d)}{n}\sum_{i=1}^n\delta_{\cdot_i}\right)}\to u\left(\mu(\R^d)\frac{\mu}{\mu(\R^d)}\right) = u(\mu),
\end{equation}
as $n\to+\infty$.
\begin{remark}
	Notice that if $u\colon\mathcal{M}_{2,k}^+(\R^d)\to\R$ is continuous with respect to the weak topology, then the mapping  $\ptwo\ni\pi\mapsto u\left(\mu(\R^d)\pi\right)\in\R$ is continuous in $W_2$ for every fixed $\mu\in\mathcal{M}_{2,k}^+(\R^d)$. Indeed if $W_2(\pi,\pi')\to 0$, then $\scalprod{\mu(\R^d)\pi}{\psi}\to\scalprod{\mu(\R^d)\pi'}{\psi}$ for every $\psi\in\mathrm{C_b}(\R^d)$ and then one conclude thanks to the continuity of $u$.
\end{remark}
\begin{remark}
	It is easy to show that the mapping 
	\begin{equation*}
		\R^{d\times n}\ni(x_1,\dots,x_n)\mapsto \tilde{u}^n(x_1,\dots,x_n)\defeq u\left(\frac{\mu(\R^d)}{n}\sum_{i=1}^n\delta_{x_i}\right)
	\end{equation*} is in $\mathrm {C_b^2}(\R^{d\times n})$ if $u\in \mathrm {C_L^2}(\Mp)$. Indeed, let $h>0$ and $k\in\{1,\dots,n\}$, then
	\begin{multline*}
		\frac{1}{h}\left[ \tilde{u}^n(x_1,\dots,x_k+h,\dots,x_n) -  \tilde{u}^n(x_1,\dots,x_k,\dots,x_n)\right] \\= \frac{\mu(\R^d)}{n}\int_0^1\frac{\lf u \left(m_{\theta,h}^{x_k},x_k+h\right) - \lf u \left(m_{\theta,h}^{x_k},x_k\right)}{h}\de\theta,
	\end{multline*}
	with $m_{\theta,h}^{x_1}\defeq \frac{\mu(\R^d) }{n}\left(\sum_{i\neq k}^n\delta_{x_i} +\theta \delta_{x_k+h} + (1-\theta) \delta_{x_k}\right)$. If we let $h\to0$, we obtain 
	\begin{equation*}
		\partial_k  \tilde{u}^n(x_1,\dots,x_n) = \frac{\mu(\R^d)}{n}\diff _\mu u\left(\frac{\mu(\R^d)}{n}\sum_{i=1}^n\delta_{x_i},x_k\right),
	\end{equation*} 
	which is continuous and bounded thanks to the regularity of $\diff _\mu u$. We can proceed analogously for the second order derivatives. Moreover, if we consider $k> 1$ and $u\in \mathrm {C_L^2}(\mathcal{M}^+_{2,k}(\R^d))$, then the mapping 
	$\R^{d\times n}\times \left[\frac{1}{k}, k\right]\ni(x_1,\dots,x_n,z)\mapsto \bar{u}^n(x_1,\dots,x_n,z)\defeq u\left(\frac{z}{n}\sum_{i=1}^n\delta_{x_i}\right)$ is in $\mathrm{ C_b^2}(\R^{d\times n}\times \left[\frac{1}{k}, k\right])$. Indeed, let $h$ be an admissible increment, then
	\begin{multline*}
		\frac{1}{h}\left[ \bar{u}^n(x_1,\dots,x_n,z+h) -  \bar{u}^n(x_1,\dots,x_n,z)\right] \\
		=\frac{1}{n}\sum_{k=1}^d\int_0^1\lf u\left(\frac{z + \theta h}{n}\sum_{i=1}^d\delta_{x_i},x_k\right)\de \theta\to \frac{1}{n}\sum_{k=1}^d\lf u \left(\frac{z}{n}\sum_{i=1}^d\delta_{x_i},x_k\right),
	\end{multline*}
	as $h\to0$, which is continuous and bounded thanks to the regularity of $\lf u$. As before, we can proceed analogously for the second order derivatives.
\end{remark}
The set $\mathcal{M}^+_{2,k}(\R^d)$ is not compact with respect to the weak topology and in the further computations this will give rise to some problems. So, we want to restrict to the case in which the measures are in a compact subset of $\mathcal{M}^+_{2,k}(\R^d)$. Let us introduce the family of compact rectangles $\{K_N = [-N,N]^d\}_{N\geq1}\subset\R^d$. Then, for every $N\geq1$, we define
\begin{equation*}
	\mathcal{H}_N^k\defeq\left\{\mu\in\mathcal{M}^+_{2,k}(\R^d)\colon \supp \mu\subset K_N\right\},
\end{equation*}  
which is compact in the weak topology. Given $\mu\in\mathcal{M}^+_{2,k}(\R^d)$, we denote with $\rho^N\mu$ the measure such that $\frac{\de \rho^N\mu}{\de \mu}  = \rho^N$, with  $\rho^N$ positive and smooth cut-off function equal to $1$ in $K_N$ and identically zero outside $K_{N+1}$. We notice that $\rho^N\mu$ is always in $\mathcal{H}_{N+1}^k$. Thus, for every $N\geq1$ we  set
\begin{equation}\label{eqn: approxcomp}
	\hat{u}^N(\mu)\defeq u(\rho^N\mu),\quad \mu\in\mathcal{M}^+_{2,k}(\R^d).
\end{equation}
In the following lemma, we show how we can approximate a function $u\colon\mathcal{M}^+_{2,k}(\R^d)\to\R$ with $\{\hat{u}^N\}_{N\geq1}$. 
\begin{lemma}\label{lemma: approxsuppcomp}
Let $k>1$, let $u$ be in $\mathrm {C_L^2}(\mathcal{M}^+_{2,k}(\R^d))$ and let the sequence $\{\hat{u}^N\}_{N\geq1}$ be defined by \eqref{eqn: approxcomp}.
	Then, for every $\mu\in\mathcal{M}^+_{2,k}(\R^d)$, $\hat{u}^N(\mu)\to u(\mu)$ as $N\to+\infty$ and $\{\lf \hat{u}^N\}_{N\geq1}$, $\{\lf^2\hat{u}^N\}_{N\geq1}$, $\{\diff _\mu \hat{u}^N\}_{N\geq1}$, $\{\diff _\mu^2\hat{u}^N\}_{N\geq1}$, $\{\diff _x\diff _\mu\hat{u}^N\}_{N\geq1}$, $\{\diff _x\lf^2\hat{u}^N\}_{N\geq1}$ pointwise converge to the respective derivatives of $u$. Moreover, $\norm{\hat{u}^N}_\infty\leq\norm{u}_\infty$, and there exists $C>0$ independent of $u,N,k$ such that 
	\begin{align*}
	\norm{\lf \hat{u}^N}&\leq\norm{\lf u},&\norm{\lf^2\hat{u}^N}\leq\norm{\lf^2u},\\
	\norm{\diff _\mu \hat{u}^N}&\leq C(\norm{\diff _\mu u}+\norm{\lf u}),& \norm{\partial_x\diff _\mu \hat{u}^N}\leq C(\norm{\diff _\mu u}+\norm{\lf u} + \norm{\diff _x\diff _\mu u}),\\
	\norm{\diff _x\lf^2 \hat{u}^N}&\leq C(\norm{\lf^2 u}+\norm{\diff _x\lf^2 u}), &\norm{\diff _\mu^2 \hat{u}^N}\leq C(\norm{\lf^2 u}+\norm{\diff _x\lf^2 u}+\norm{\diff _\mu^2 u}),
\end{align*}
	where the norms are meant as $\norm{\cdot}_\infty$, that is the supremum norm over all the arguments of the function.
\end{lemma}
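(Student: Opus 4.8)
The plan is to recognize $\hat u^N$ as the composition $\hat u^N = u\circ m^N$, where $m^N\colon\mu\mapsto\rho^N\mu$ is the multiplication map studied in the first example after Definition \ref{def: lfdermeas}. There we computed $\lfm m^N(\mu,x)=\rho^N(x)\delta_x$ and $\lfm^2 m^N(\mu,x,y)=0$, so $m^N\in\mathrm{\tilde C^2}(\Mp)$. Applying the chain rule of Proposition \ref{prop: complflfm} (together with the product rule of Proposition \ref{prop: prodrulelf}) and then differentiating in the spatial variables gives closed formulas for each of the six derivatives. For instance, $\lf\hat u^N(\mu,x)=\rho^N(x)\lf u(\rho^N\mu,x)$ and $\lf^2\hat u^N(\mu,x,y)=\rho^N(x)\rho^N(y)\lf^2 u(\rho^N\mu,x,y)$, whence $\diff_\mu\hat u^N(\mu,x)=\diff_x\rho^N(x)\,\lf u(\rho^N\mu,x)+\rho^N(x)\,\diff_\mu u(\rho^N\mu,x)$; the formulas for $\diff_x\lf^2\hat u^N$, $\diff_x\diff_\mu\hat u^N$ and $\diff_\mu^2\hat u^N$ are obtained the same way, each being a sum of products of $\rho^N$ (or its first or second spatial derivatives) with a derivative of $u$ evaluated at $\rho^N\mu$.

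The bounds then read off directly from these expressions. Since $0\le\rho^N\le 1$, the terms not involving any derivative of $\rho^N$ — namely $\hat u^N$, $\lf\hat u^N$ and $\lf^2\hat u^N$ — are dominated by the corresponding sup-norms of $u$, $\lf u$ and $\lf^2 u$ with constant $1$. For the remaining, $L$-type derivatives, every term carries either a factor $\diff_x\rho^N$ or $\diff_x^2\rho^N$; it is here that we must choose the cut-offs $\{\rho^N\}_{N\geq1}$ with first and second spatial derivatives bounded uniformly in $N$, which is possible because the transition region $K_{N+1}\setminus K_N$ has width one independently of $N$. With such a choice one obtains the stated estimates, with $C$ depending only on $\sup_N(\|\diff_x\rho^N\|_\infty+\|\diff_x^2\rho^N\|_\infty)$ and hence independent of $u$, $N$ and $k$.

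Finally, pointwise convergence rests on three elementary facts: $\rho^N\mu\to\mu$ weakly as $N\to+\infty$ (by dominated convergence, since $\rho^N\to1$ pointwise, $0\le\rho^N\le1$, and $\mu$ is finite), and, for each fixed spatial argument, $\rho^N\to1$, $\diff_x\rho^N\to0$ and $\diff_x^2\rho^N\to0$ (each fixed point eventually lies in the interior of $K_N$, where $\rho^N\equiv1$). In every formula from the first step, the terms carrying a derivative of $\rho^N$ therefore vanish in the limit, while the remaining term converges to the corresponding derivative of $u$ at $\mu$ by the joint continuity (with $\Mp$ in the weak topology) of $\lf u$, $\lf^2 u$, $\diff_\mu u$, $\diff_x\lf^2 u$ and $\diff_\mu^2 u$ guaranteed by $u\in\mathrm{C_L^2}(\mathcal{M}^+_{2,k}(\R^d))$. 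This yields $\hat u^N(\mu)\to u(\mu)$ and the analogous convergence of all derivatives.

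I expect the main obstacle to be purely organizational: carefully carrying out the iterated chain and product rules and tracking which terms survive the limit, rather than any single hard estimate. A minor point to address is the domain of definition: for a fixed $\mu$ one has $\rho^N\mu(\R^d)\to\mu(\R^d)\in[\tfrac1k,k]$ while $\rho^N\mu(\R^d)\le\mu(\R^d)$, so $\rho^N\mu\in\mathcal{M}^+_{2,k}(\R^d)$, and hence $\hat u^N(\mu)$ is well defined, for all $N$ large enough, which is all that the convergence statement requires; the uniform bounds hold at every $N$ for which $\hat u^N$ is defined.
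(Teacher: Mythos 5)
Your proposal is correct and follows essentially the same route as the paper's own proof: both identify $\hat u^N = u\circ m^N$ with $m^N(\mu)=\rho^N\mu$, apply the chain rule of Proposition \ref{prop: complflfm} (with $\lfm m^N(\mu,x)=\rho^N(x)\delta_x$, $\lfm^2 m^N=0$) to get the identical explicit formulas $\lf\hat u^N(\mu,x)=\rho^N(x)\lf u(\rho^N\mu,x)$, $\lf^2\hat u^N(\mu,x,y)=\rho^N(x)\rho^N(y)\lf^2 u(\rho^N\mu,x,y)$ and their spatial-derivative expansions, and both conclude via $\rho^N\mu\to\mu$ weakly, the vanishing of terms carrying $\diff_x\rho^N$ or $\diff_x^2\rho^N$ (supported in the unit-width transition region, so bounded uniformly in $N$), and the joint continuity of the derivatives of $u$, with the constant $C$ read off from the uniform bounds on $\rho^N$ and its derivatives. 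Your closing remark on the domain of definition (that $\rho^N\mu(\R^d)$ may fall below $1/k$ for small $N$, so $\hat u^N(\mu)$ is only guaranteed well defined for $N$ large depending on $\mu$) is in fact a small refinement of the paper, which simply asserts $\rho^N\mu\in\mathcal{H}_{N+1}^k$ without this caveat.
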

\begin{proof}
First, we notice that $\rho^N\mu\to\mu$ weakly as $N\to+\infty$. Indeed, for every $\varphi \in C_b(\R^d)$, we have
\begin{equation*}
	\scalprod{\rho^N\mu}{\varphi} = \int_{\R^d}\rho^N(x)\varphi(x)\mu(\de x)\to\int_{\R^d}\varphi(x)\mu(\de x),
\end{equation*}
as $n\to+\infty$, thanks to dominated convergence. Thus, since $u$ is continuous with respect to the weak topology, we have that for every $\mu\in\mathcal{M}^+_{2,k}(\R^d)$, $\hat{u}^N(\mu)\to u(\mu)$ as $N\to+\infty$. Moreover, it is immediate that $\norm{\hat{u}^N}_\infty\leq\norm{u}_\infty$.\\\\
Regarding the linear functional derivatives, from Proposition \ref{prop: complflfm} with $m(\mu)= \rho^N\mu$ and $g(\mu) = u(\mu)$, it follows that for every $\mu\in\mathcal{M}^+_{2,k}(\R^d)$ and $x,y\in\R^d$
\begin{align*}
	\lf \hat{u}^N(\mu, x) & = \rho^N(x)\lf u (\rho^N\mu,x)\to\lf u (\mu,x), &\mathrm{as}\ N\to+\infty,\\
	\lf^2 \hat{u}^N(\mu,x,y)& = \rho^N(x)\rho^N(y)\lf^2 u (\rho^N\mu,x,y)\to\lf^2 u(\mu,x,y), &\mathrm{as}\ N\to+\infty,
\end{align*}
thanks to the continuity of $\lf u$ and $\lf ^2u$ and the fact that $\rho^N(x)\to 1$ as $N\to+\infty$, for every $x\in\R^d$. As before, the estimates on the norms easily follows.\\\\
To conclude, it remains to show the convergence of the derivatives in space of $\lf u$ and $\lf^2u$. For instance, we have that, for every $\mu\in\mathcal{M}^+_{2,k}(\R^d)$ and $x\in\R^d$
\begin{multline*}
	\diff _\mu \hat{u}^N(\mu,x) = \diff _x\lf \hat{u}^N(\mu,x) \\ = \diff _x\rho^N(x)\lf u (\rho^N\mu,x) + \rho^N(x)\diff _\mu u (\rho^N\mu,x)\to\diff _\mu u(\mu,x),\quad N\to+\infty.
\end{multline*}
Indeed, the first summand tends to 0 since $\supp \diff _x\rho^N\subset[N,N+1]$ and $\lf u$ is bounded, whilst the second one tends to $\diff _\mu u(\mu,x)$ thanks to the continuity of $\diff _\mu u$ and the fact that $\rho^N(x)\to 1$ as $N\to+\infty$, for every $x\in\R^d$. In a similar way, we get
\begin{align*}
	 \diff _x\diff _\mu \hat{u}^N(\mu,x)  =& \diff _x^2\rho^N(x)\lf u (\rho^N\mu,x) \\ & \qquad+ 2 \diff _x\rho^N(x)\diff _\mu u (\rho^N\mu,x)  + \rho^N(x) \diff _x\diff _\mu u (	\rho^N\mu,x),\\\\
	 \diff _x\lf^2 \hat{u}^N(\mu,x,y)  =&\rho^N(y) \diff _x\rho^N(x)\lf^2 u (\rho^N\mu,x,y) + \rho^N(y)\rho^N(x) \diff _x\lf^2 u (\rho^N\mu,x,y),\\\\
	\diff _\mu^2\hat{u}^N(\mu,x,y)  =& \diff _x\rho^N(x) \diff _y\rho^N(y)^\top\lf^2 u (\rho^N\mu,x,y) \\ &\qquad+\rho^N(y) \diff _x\rho^N(x) \diff _y\lf^2 u (\rho^N\mu,x,y) ^\top\\
	&\qquad\qquad+\rho^N(x) \diff _x\lf^2 u (\rho^N\mu,x,y) \diff _y \rho^N(y)^\top  \\&\qquad\qquad\quad+ \rho^N(y)\rho^N(x)\diff _\mu^2 u (\rho^N\mu,x,y),
\end{align*}
and then we can prove the convergence of the remaining derivatives as before. \\\\
Regarding the estimates on the norm, it follows that 
\begin{align*}
	\norm{\diff _\mu \hat{u}^N}&\leq C(\norm{\diff _\mu u}+\norm{\lf u}),\quad& \norm{\partial_x\diff _\mu \hat{u}^N}\leq C(\norm{\diff _\mu u}+\norm{\lf u} + \norm{\partial_x\diff _\mu u}),\\
	\norm{\partial_x\lf^2 \hat{u}^N}&\leq C(\norm{\lf^2 u}+\norm{\partial_x\lf^2 u}),\quad &\norm{\diff _\mu^2 \hat{u}^N}\leq C(\norm{\lf^2 u}+\norm{\partial_x\lf^2 u}+\norm{\diff _\mu^2 u}),
\end{align*}
where $C = \max\left\{1, 2\norm{\partial_x\rho^N},\norm{\partial_x^2\rho^N}, \norm{\partial_x\rho^N(\partial_y\rho^N)^\top}\right\}$ and all the norms are meant as $\norm{\cdot}_\infty$. Notice that $C$ can be chosen independent of $N$, $k$ and $u$, thanks to the particular structure of $\rho^N$.
\end{proof}
\noindent Thanks to Lemma \ref{lemma: approxsuppcomp}, we can approximate functions in $\mathrm {C_L^2}(\mathcal{M}^+_{2,k}(\R^d))$ with functions in $\mathrm {C_L^2}(\mathcal{H}_N^k)$. Since we need more regular approximants, our goal now is to show that if $u\in\rm C_L^2(\mathcal{H}_N^k)$, with $k>1$ and $N\geq 1$ fixed, then its derivatives can be approximated by the corresponding derivatives of the sequence $\{\phi^n\}_{n\geq1}$ introduced in \eqref{eqn: approxcilgen}, namely
\begin{equation*}
	\phi^n(\mu)\defeq\E{u^n(\mu)} = \frac{1}{\mu(\R^d)^n}\scalprod{\mu^{\times n}}{u\left(\frac{\mu(\R^d)}{n}\sum_{i=1}^n\delta_{\cdot_i}\right)},\quad \mu\in\mathcal{H}_N^k.
\end{equation*} 

\begin{lemma}\label{lemma: approxcilgen}
	Let $u$ be in $\rm C_L^2(\mathcal{H}_N^k)$, for $k>1$ and $N\geq1$ fixed, and let $\{\phi^n\}_{n\geq1}$ be defined by \eqref{eqn: approxcilgen}. 
	Then, for every $\mu\in\mathcal{H}_N^k$, $\phi^n(\mu)\to u(\mu)$ as $n\to+\infty$ and $\{\lf \phi^n\}_{n\geq1}$, $\{\lf^2\phi^n\}_{n\geq1}$, $\{\diff _\mu \phi^n\}_{n\geq1}$, $\{\diff _\mu^2\phi^n\}_{n\geq1}$, $\{\diff _x\diff _\mu\phi^n\}_{n\geq1}$, $\{\diff _x\lf^2\phi^n\}_{n\geq1}$ converge pointwise to the respective derivatives of $u$. Moreover, $\norm{\phi^n}_\infty\leq\norm{u}_\infty$ and the same holds for the derivatives, up to a multiplicative constant independent of $u$, $N$ and $k$.
\end{lemma}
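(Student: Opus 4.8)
The plan is to realize $\phi^n$ as an expectation over an empirical measure and then to reduce every convergence and every bound to the law of large numbers together with the continuity and boundedness of the derivatives of $u$. Write $m=\mu(\R^d)\in[\tfrac1k,k]$, set $\pi=\mu/m\in\ptwo$ and let $\{X_i\}_{i\geq1}$ be i.i.d.\ with law $\pi$, so that with $\mu_n\defeq\tfrac mn\sum_{i=1}^n\delta_{X_i}$ one has $\phi^n(\mu)=\E{u(\mu_n)}$. Since $\mu\in\mathcal H_N^k$ the $X_i$ take values in the compact set $K_N$ and each $\mu_n$ again belongs to $\mathcal H_N^k$. By the empirical-measure law of large numbers (see \cite[Section 5.1.2]{carmonadelarue1}) we have $\mu_n\to\mu$ weakly, almost surely; as $u$ is continuous for the weak topology and bounded, dominated convergence gives $\phi^n(\mu)\to u(\mu)$, while $|\phi^n(\mu)|\leq\norm u_\infty$ is immediate because $\phi^n(\mu)$ is an average of values of $u$.

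The first real step is to produce closed formulas for the derivatives of $\phi^n$. Writing $\bar u^n(x_1,\dots,x_n,z)=u(\tfrac zn\sum_i\delta_{x_i})$ as above, we have $\phi^n(\mu)=m^{-n}\scalprod{\mu^{\times n}}{\bar u^n(\cdot,m)}$ with $m=\scalprod{\mu}{\mathbf 1}$, so that $\mu$ enters through the product measure $\mu^{\times n}$, through the prefactor $m^{-n}$, and through the scaling $z=m$ inside $\bar u^n$. Since $\lf m\equiv1$ and $\diff_\mu m\equiv0$, combining the chain rule (Proposition \ref{prop: comp}), the product rule (Proposition \ref{prop: prodrulelf}) and the elementary identity $\lf\scalprod{\mu^{\times n}}{W}(\mu,x)=n\scalprod{\mu^{\times(n-1)}}{W(x,\cdot)}$ valid for symmetric $W$, and then rewriting $m^{-n}\scalprod{\mu^{\times n}}{\cdot}=\E{\cdot(X_1,\dots,X_n)}$, I would obtain
\begin{equation*}
	\lf\phi^n(\mu,x)=\frac nm\,\E{u(\mu_n^x)-u(\mu_n)}+\E{\lf u(\mu_n,X_1)},
	\qquad \mu_n^x=\tfrac mn\Bigl(\delta_x+\textstyle\sum_{i=2}^n\delta_{X_i}\Bigr),
\end{equation*}
where $\mu_n^x$ is $\mu_n$ with its first particle moved to $x$, and the last term is the contribution of the $z$-scaling, computed from $\partial_z\bar u^n=\tfrac1n\sum_k\lf u(\cdot,x_k)$ and collapsed by exchangeability.

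The crucial point is the passage to the limit, which rests on one exact cancellation. Applying the defining identity \eqref{eqn: fder} of $\lf u$ along the segment $\nu_t=t\mu_n^x+(1-t)\mu_n$ and using $\mu_n^x-\mu_n=\tfrac mn(\delta_x-\delta_{X_1})$ gives
\begin{equation*}
	\frac nm\bigl(u(\mu_n^x)-u(\mu_n)\bigr)=\int_0^1\bigl[\lf u(\nu_t,x)-\lf u(\nu_t,X_1)\bigr]\,\de t,
\end{equation*}
which is bounded by $2\norm{\lf u}_\infty$ and, since $\nu_t\to\mu$, converges to $\lf u(\mu,x)-\lf u(\mu,X_1)$. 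Taking expectations, the spurious term $-\E{\lf u(\mu,X_1)}$ is exactly cancelled by the limit $\E{\lf u(\mu,X_1)}$ of the second term, so $\lf\phi^n(\mu,x)\to\lf u(\mu,x)$ with $\norm{\lf\phi^n}_\infty\leq3\norm{\lf u}_\infty$ uniformly in $n$, $N$, $k$. The first-order $L$-derivative is even cleaner: only the $x$-dependent piece survives the spatial gradient, and $\diff_x[u(\mu_n^x)]=\tfrac mn\diff_\mu u(\mu_n^x,x)$ turns the prefactor $\tfrac nm$ into $\diff_\mu\phi^n(\mu,x)=\E{\diff_\mu u(\mu_n^x,x)}\to\diff_\mu u(\mu,x)$, with $\norm{\diff_\mu\phi^n}_\infty\leq\norm{\diff_\mu u}_\infty$.

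The main obstacle is the bookkeeping for the four second-order objects $\lf^2\phi^n$, $\diff_x\lf^2\phi^n$, $\diff_x\diff_\mu\phi^n$ and $\diff_\mu^2\phi^n$. Differentiating once more multiplies the combinatorial weights: the purely product part of $\lf^2$ carries a factor $\tfrac{n(n-1)}{m^2}$ coming from the off-diagonal slots, while diagonal contributions (from a common slot and from the $m^{-n}$ and $z$-scaling terms) carry an extra $\tfrac1n$. Each spatial gradient hitting the empirical argument produces a factor $\tfrac mn$ via $\partial_{x_k}\tilde u^n=\tfrac mn\diff_\mu u(\cdot,x_k)$, so the top $L$-derivative has its weights cancelled automatically and converges directly, e.g.
\begin{equation*}
	\diff_\mu^2\phi^n(\mu,x,y)=\frac{n(n-1)}{n^2}\,\E{\diff_\mu^2 u(\mu_n^{x,y},x,y)}+o(1)\longrightarrow\diff_\mu^2 u(\mu,x,y),
\end{equation*}
whereas the objects carrying fewer spatial gradients than their differentiation order ($\lf^2\phi^n$ and $\diff_x\lf^2\phi^n$) retain an uncancelled $\tfrac{n(n-1)}{m^2}$ or $\tfrac{n-1}{m}$, which must be absorbed exactly as in the first order by the integral representation \eqref{eqn: fder} and its second-order analogue for $\lf^2 u$, applied now to a double particle replacement. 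This simultaneously yields the pointwise limits and the bounds by $\norm{\lf^2 u}_\infty$, $\norm{\diff_x\lf^2 u}_\infty$ and $\norm{\diff_\mu^2 u}_\infty$ up to a universal multiplicative constant, uniform in $n$, $N$, $k$. Checking that every spurious term cancels and that no combinatorial factor survives uncancelled is the delicate part of the argument, but it is entirely parallel to the first-order computation carried out above.
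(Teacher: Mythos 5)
Your proposal is correct and follows essentially the same route as the paper's proof: the empirical representation $\phi^n(\mu)=\E{u(\mu_n)}$, the three-term formula for $\lf\phi^n$ obtained from the product/chain rules, the rewriting of the particle-replacement difference via the defining identity \eqref{eqn: fder} along the segment $\nu_t$ with the resulting exact cancellation and the bound $3\norm{\lf u}_\infty$, the identity $\diff_\mu\phi^n(\mu,x)=\E{\diff_\mu u(\mu_n^x,x)}$, and "similar computations" for the second-order objects are all precisely the paper's steps. The only (harmless) deviation is that you collapse $\frac1n\sum_j\E{\lf u(\mu_n,X_j)}$ to $\E{\lf u(\mu_n,X_1)}$ by exchangeability and conclude by dominated convergence, where the paper instead splits off the term $\frac1n\E{\sum_j\lf u(\mu,X_j)}$ and invokes the law of large numbers.
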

\begin{remark} \label{rmk: cilgen}
Thanks to Lemma \ref{lemma: approxcilgen}, we can approximate functions in $\mathrm {C_L^2}(\mathcal{H}_N^k)$ with functions $\phi^n\colon\mathcal{H}_N^k\to\R$ of the form $\phi^n(\mu) = \scalprod{\frac{\mu^{\times n}}{\mu(\R^d)^n}}{\varphi_n(\cdot,\dots,\cdot,\mu(\R^d))}$, where $\varphi_n\in \mathrm {C_b^2}(K_N^n\times \left[\frac{1}{k},k\right])$ is symmetric in the first $n$ arguments and $K_N = [-N,N]^d$.
\end{remark}
\begin{proof}
First, $u\colon\mathcal{H}_N^k\to\R$ is absolutely continuous with respect to the weak topology and so it is also for the mapping $\mathcal{P}_2(K_N)\ni\pi\mapsto u(\mu(\R^d)\pi)$, with $\mu\in\mathcal{H}_N^k$. Moreover, since we are considering measures over a compact subset of $\R^d$, the absolute continuity of this last mapping also holds in $W_2$. The same consideration is true also for $\lf u$, $\diff _\mu u$ and $\mathrm D_x\diff _\mu u$ (resp.  $\lf^2 u$, $\mathrm D_x\lf^2 u$ and $\mathrm D^2_\mu u$) when restricted to $\mathcal{H}_N^k\times K_N$ (resp. $\mathcal{H}_N^k\times K_N\times K_N$). \\\\
We already showed the convergence of $\phi^n(\mu)$ to $u(\mu)$ for every $\mu\in\mathcal{H}_N^k$. Let us discuss the convergence of the linear functional derivatives of $\phi^n$. First, we notice that $\phi^n(\mu) = f(\mu)g(\mu)$ with
\begin{equation*}
	g(\mu):=\frac{1}{\mu(\R^d)^n},\quad f(\mu):= \scalprod{\mu^{\times n}}{u\left(\frac{\mu(\R^d)}{n}\sum_{i=1}^n\delta_{\cdot_i}\right)},\quad \mu\in\mathcal{H}_N^k.
\end{equation*}
From Proposition \ref{prop: comp} it follows that $\lf g(\mu,x) = -\frac{n}{\mu(\R^d)^{n+1}}$,
whilst from Proposition \ref{prop: prodrulelf} combined with \ref{prop: complflfm} we get
\begin{equation*}
	\lf f(\mu,x) = n\scalprod{\mu^{\times(n-1)}}{u\left(\frac{\mu(\R^d)}{n}\sum_{i=1}^{n-1}\delta_{\cdot_i}+\frac{\mu(\R^d)}{n}\delta_x\right)} + \frac{1}{n}\sum_{j=1}^n\scalprod{\mu^{\times n}}{\lf u\left(\frac{\mu(\R^d)}{n}\sum_{i=1}^n\delta_{\cdot_i},\cdot_j\right)}.
\end{equation*}

Thus, from Proposition \ref{prop: prodrulelf},  we have that for $(\mu,x)\in\mathcal{H}_N^k\times K_N$, 
\begin{multline*}
	\lf \phi^n(\mu,x) \\= \frac{n}{\mu(\R^d)^n}\scalprod{\mu^{\times(n-1)}}{u\left(\frac{\mu(\R^d)}{n}\sum_{i=1}^{n-1}\delta_{\cdot_i}+\frac{\mu(\R^d)}{n}\delta_x\right)}
	-\frac{n}{\mu(\R^d)^{n+1}}\scalprod{\mu^{\times n}}{u\left(\frac{\mu(\R^d)}{n}\sum_{i=1}^n\delta_{\cdot_i}\right)}\\
	+ \frac{1}{n}\sum_{j=1}^n\scalprod{\left(\frac{\mu}{\mu(\R^d)}\right)^{\times n}}{\lf u\left(\frac{\mu(\R^d)}{n}\sum_{i=1}^n\delta_{\cdot_i},\cdot_j\right)},
\end{multline*}
which can be written by using the random variables $\{X_i\}_{i=1}^n$ as
\begin{multline}\label{eqn: lfapproxcilgen}
	\lf \phi^n(\mu,x) = \frac{n}{\mu(\R^d)}\left\{\E{u\left(\frac{\mu(\R^d)}{n}\sum_{i=1}^{n-1}\delta_{X_i}+\frac{\mu(\R^d)}{n}\delta_x\right)} - \E{u\left(\frac{\mu(\R^d)}{n}\sum_{i=1}^{n}\delta_{X_i}\right)}\right\}\\
	+ \frac{1}{n}\sum_{j=1}^n \E{\lf u \left(\frac{\mu(\R^d)}{n}\sum_{i=1}^{n}\delta_{X_i},X_j\right)}.
\end{multline}
Regarding the first two terms of \eqref{eqn: lfapproxcilgen}, since $u$ is differentiable in linear functional sense, we get that their difference is equal to
\begin{multline*}
		\E{\int_0^1\lf u \left(\frac{\mu(\R^d)}{n}\sum_{i=1}^{n-1}\delta_{X_i} + \theta\frac{\mu(\R^d)}{n}\delta_{x} + (1-\theta)\frac{\mu(\R^d)}{n}\delta_{X_n},x\right)\de\theta}\\
		 - \E{\int_0^1\lf u \left(\frac{\mu(\R^d)}{n}\sum_{i=1}^{n-1}\delta_{X_i} + \theta\frac{\mu(\R^d)}{n}\delta_{x} + (1-\theta)\frac{\mu(\R^d)}{n}\delta_{X_n},X_n\right)\de\theta},
\end{multline*}
which tends to $\lf u(\mu,x) - \scalprod{\frac{\mu}{\mu(\R^d)}}{\lf u (\mu,\cdot)}$, thanks to dominated convergence and the regularity of the linear functional derivative. On the other hand, the last therm in \eqref{eqn: lfapproxcilgen} converges to $\scalprod{\frac{\mu}{\mu(\R^d)}}{\lf u (\mu,\cdot)}$ (which coincide, for instance, with $\E{\lf u (\mu,X_1)}$). Indeed, 
\begin{multline*}
	\left\vert\frac{1}{n}\sum_{j=1}^n \E{\lf u \left(\frac{\mu(\R^d)}{n}\sum_{i=1}^{n}\delta_{X_i},X_j\right)} -  \E{\lf u (\mu,X_1)}\right\vert\\
	\leq\left\vert\frac{1}{n}\sum_{j=1}^n \E{\lf u \left(\frac{\mu(\R^d)}{n}\sum_{i=1}^{n}\delta_{X_i},X_j\right)} - \frac{1}{n} \E{\sum_{j=1}^n\lf u (\mu,X_j)}\right\vert\\
	+ \left\vert\frac{1}{n} \E{\sum_{j=1}^n\lf u (\mu,X_j)} - \E{\lf u (\mu,X_1)}\right\vert,
\end{multline*}
then the first term tends to zero thanks to the uniform continuity of $\lf u$   
and the second one tends to zero due to the law of large numbers. From \eqref{eqn: lfapproxcilgen} it also follows that $\norm{\lf\phi^n}_\infty\leq 3\norm{\lf u}_\infty$. Similarly we are able to show the pointwise convergence of $\lf^2\phi^n$ and the estimate for $\norm{\lf^2\phi^n}_\infty$.\\

Now we study the convergence of the $L$-derivatives of $\phi^n$, by differentiating \eqref{eqn: lfapproxcilgen} with respect to $x$. The second and the third term do not depend on $x$, whilst for the first one we can compute the increment:
\begin{equation*}
	\frac{1}{h}\left[\lf\phi^n(\mu,x+he_k) - \lf\phi^n(\mu,x)\right] = \E{\int_0^1\frac{\lf u \left(m^x_{\theta,h},x+he_k\right) - \lf u \left( m^x_{\theta,h},x\right)}{h}\de \theta},
\end{equation*}
where $h\in\R$, $k\in\{1,\dots,d\}$, $e_k$ is the $k$-th vector of the canonical basis of $\R^d$ and 
\begin{equation*}
	m^x_{\theta,h}\defeq  \frac{\mu(\R^d)}{n}\left(\sum_{i=1}^{n-1}\delta_{X_i} + \theta\delta_{x+he_k} +(1-\theta)\delta_{x}\right).
\end{equation*} Then we can pass to the limit as $h\to 0$ and thanks to the uniform continuity and the differentiability of $\lf u$ we get
\begin{equation*}
	\diff _\mu \phi^n(\mu,x) = \diff _x\lf \phi^n(\mu,x) = \E{\mathrm D_x\lf u\left(\frac{\mu(\R^d)}{n}\sum_{i=1}^{n-1}\delta_{X_i} + \frac{\mu(\R^d)}{n}\delta_{x} ,x\right)} ,
\end{equation*}
which tends to $\diff _\mu u(\mu,x)$ as $n\to +\infty$ and from which we can also deduce $\norm{D_mu\phi^n}_\infty\leq\norm{\diff _\mu u}_\infty$. Again, with similar computations one can show the pointwise convergence and the norms estimates for $\{\diff _x\lf^2\phi^n\}_{n\geq1}$, $\{\diff _x\diff _\mu\phi^n\}_{n\geq1}$, $\{\diff _\mu^2\phi^n\}_{n\geq1}$.\\
\end{proof}
A further approximation can be achieved by substituting $\varphi$ in Remark \ref{rmk: cilgen} with a certain family of symmetric polynomials. The obtained function has a more regular structure and is easier to study. We call functions of this type cylindrical and they represent the last step in our approximation.
\begin{definition}[Cylindrical functions]\label{def: cyl}
We say that $u: \Mp \to \R$ is a cylindrical function of order $k \in \N$ if there exist $n \in \N$, $g \in \mathrm {C^k_b}(\R^n)$ and $\{\psi_i\}_{i=1}^n\subset \mathrm{C}^k_\mathrm {b}(\R^d)$ such that
\begin{equation*}
u(\mu) = g\left(\scalprod{\mu}{\psi_1},\dots,\scalprod{\mu}{\psi_n}\right), \quad \forall \, \mu \in \Mp.
\end{equation*}
We denote with $\mathcal{C}_k(\Mp)$ the set of cylindrical functions of order $k$. 
\end{definition}
In order to study the convergence in the next Lemma, we consider the following norm: let $N\geq1$, $k>1$ and let $u\in \mathrm {C_L^2}(\mathcal{H}_N^k)$
, then
\begin{multline}\label{eqn: c2lnrom}
	\norm{u}_{\mathrm {C_L^2}(\mathcal{H}_N^k)}  \defeq \sup_{\mu\in\mathcal{H}_N^k, \atop x,y\in K_N}\big( \vert u(\mu)\vert + \vert\lf u(\mu,x)\vert + \vert\lf^2u(\mu,x,y) \vert+ \vert\mathrm D_x\lf^2 u(\mu,x,y)\vert \\+ \vert \diff _\mu u(\mu,x)\vert +  \vert \mathrm D_x\diff _\mu u(\mu,x)\vert + \vert \mathrm D^2_\mu u(\mu,x,y)\vert \big),
\end{multline}
where we recall that $K_N = [-N,N]^d$.
\begin{lemma}\label{lemma: approxcilgencil}
	Let $N\geq 1$ and $k>1$ be fixed. Let $f\colon\mathcal{H}_N^k\to\R$ be defined, for $r\in\N$, as $f(\mu)\defeq\scalprod{\frac{\mu^{\times r}}{\mu(\R^d)^r}}{\varphi(\cdot,\dots,\cdot,\mu(\R^d))}$, where $\varphi\in \mathrm {C_b^2}\left(K_N^r\times \left[\frac{1}{k},k\right]\right)$ is symmetric in the first $r$ arguments. Then, there exists a sequence $\{f^n\}_{n\geq1}\subset\mathcal{C}_2(\Mp)$ such that $\norm{f - f^n}_{\rm C_L^2(\mathcal{H}_N^k)}\to 0$ as $n\to+\infty$. 
\end{lemma}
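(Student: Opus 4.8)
The plan is to reduce the statement to a finite-dimensional $\mathrm{C^2}$ approximation of the symbol $\varphi$ by symmetric polynomials. The starting point is that the assignment $\varphi\mapsto f_\varphi$, with $f_\varphi(\mu)\defeq\scalprod{\frac{\mu^{\times r}}{\mu(\R^d)^r}}{\varphi(\cdot,\dots,\cdot,\mu(\R^d))}$, is linear in $\varphi$ and depends only on the restriction of $\varphi$ to the compact box $K_N^r\times[\frac1k,k]$, since every $\mu\in\mathcal{H}_N^k$ is supported in $K_N$ and has mass $\mu(\R^d)\in[\frac1k,k]$. I would first prove the continuity estimate $\norm{f_\varphi}_{\mathrm{C_L^2}(\mathcal{H}_N^k)}\le C(N,k,r)\,\norm{\varphi}_{\mathrm{C_b^2}}$, where $\norm{\cdot}_{\mathrm{C_b^2}}$ is the norm on the box $K_N^r\times[\frac1k,k]$. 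Granted this, if $\{P_n\}$ are symmetric polynomials with $\norm{P_n-\varphi}_{\mathrm{C_b^2}}\to0$, then $f^n\defeq f_{P_n}$ satisfies $\norm{f-f^n}_{\mathrm{C_L^2}(\mathcal{H}_N^k)}=\norm{f_{\varphi-P_n}}_{\mathrm{C_L^2}(\mathcal{H}_N^k)}\le C\norm{\varphi-P_n}_{\mathrm{C_b^2}}\to0$, which is the desired conclusion once we check $f^n\in\mathcal{C}_2(\Mp)$.

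For the continuity estimate I would write $f_\varphi=\mu(\R^d)^{-r}F_\varphi$ with $F_\varphi(\mu)=\scalprod{\mu^{\times r}}{\varphi(\cdot,\mu(\R^d))}$, and compute all seven quantities appearing in \eqref{eqn: c2lnrom} by repeated use of the chain and product rules of Propositions \ref{prop: comp}, \ref{prop: complflfm} and \ref{prop: prodrulelf}. Each linear functional differentiation of $F_\varphi$ produces, by symmetry, a term in which one integration variable is frozen (pulled out and evaluated at the new spatial variable) together with a term carrying the mass derivative $\partial_z\varphi$, which comes from the dependence of $\varphi$ on $\mu(\R^d)=\scalprod{\mu}{\mathbf 1}$; taking $L$-derivatives then differentiates $\varphi$ in the frozen spatial variables. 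Consequently each of $\lf f_\varphi$, $\lf^2 f_\varphi$, $\diff_x\lf^2 f_\varphi$, $\diff_\mu f_\varphi$, $\diff_x\diff_\mu f_\varphi$, $\diff_\mu^2 f_\varphi$ is a finite sum of integrals of $\varphi$ and its partial derivatives of total order at most two against products of $\mu$, multiplied by powers of $\mu(\R^d)$. Since $\supp\mu\subset K_N$, $\mu(\R^d)\in[\frac1k,k]$ and the spatial arguments $x,y$ range in $K_N$, every such integral is bounded by $C(N,k,r)\norm{\varphi}_{\mathrm{C_b^2}}$, giving the estimate.

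It remains to construct the symmetric polynomials. As $K_N^r\times[\frac1k,k]$ is a compact box and $\varphi\in\mathrm{C_b^2}$, a standard application of the Weierstrass theorem — for instance via multivariate Bernstein polynomials, which converge together with all partial derivatives up to order two — yields polynomials $Q_n$ with $\norm{Q_n-\varphi}_{\mathrm{C_b^2}}\to0$. I would then symmetrize by setting $P_n\defeq\frac{1}{r!}\sum_{\sigma\in S_r}Q_n\circ\sigma$, where $\sigma$ permutes the first $r$ ($\R^d$-valued) arguments. Since the box is invariant under such permutations and $\varphi$ is symmetric in these arguments, symmetrization is a contraction on $\mathrm{C_b^2}$ fixing $\varphi$, whence $\norm{P_n-\varphi}_{\mathrm{C_b^2}}=\norm{\frac{1}{r!}\sum_\sigma(Q_n-\varphi)\circ\sigma}_{\mathrm{C_b^2}}\le\norm{Q_n-\varphi}_{\mathrm{C_b^2}}\to0$, and each $P_n$ is a polynomial symmetric in the first $r$ arguments.

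Finally I would verify $f^n=f_{P_n}\in\mathcal{C}_2(\Mp)$. Integrating a symmetric monomial $\prod_{i}m_i(x_i)\,z^p$ against $\frac{\mu^{\times r}}{\mu(\R^d)^r}$ gives, by the product structure of $\mu^{\times r}$, a product of normalized moments $\prod_i\frac{\scalprod{\mu}{m_i}}{\mu(\R^d)}$ times $\mu(\R^d)^p$, so $f_{P_n}$ is a fixed rational function of the finitely many moments $\scalprod{\mu}{m_j}$ and of $\mu(\R^d)=\scalprod{\mu}{\mathbf 1}$. Multiplying each $m_j$ and $\mathbf 1$ by a smooth cutoff $\chi\equiv1$ on $K_N$ with compact support gives test functions $\psi_j\in\mathrm{C_b^2}(\R^d)$ with $\scalprod{\mu}{\psi_j}=\scalprod{\mu}{m_j}$ for $\mu\in\mathcal{H}_N^k$, and composing the rational expression with a smooth cutoff — equal to $1$ on the bounded range of these moments over $\mathcal{H}_N^k$ and keeping $\mu(\R^d)$ away from $0$ — produces $g_n\in\mathrm{C_b^2}$ with $f_{P_n}(\mu)=g_n(\scalprod{\mu}{\psi_1},\dots,\scalprod{\mu}{\psi_{m_n}})$, hence $f_{P_n}\in\mathcal{C}_2(\Mp)$. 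I expect the main obstacle to be the bookkeeping in the second paragraph: carefully tracking, through the factor $\mu(\R^d)^{-r}$ and the successive applications of the chain and product rules, that none of the seven derivative quantities requires a partial derivative of $\varphi$ of order higher than two, so that the whole construction is controlled by $\norm{\varphi}_{\mathrm{C_b^2}}$; the polynomial approximation and the symmetrization are routine by comparison.
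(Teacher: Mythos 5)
Your proposal is correct and follows essentially the same route as the paper: approximate the symbol $\varphi$ by symmetric polynomials in $\mathrm{C^2}$ norm on the compact box $K_N^r\times\left[\frac{1}{k},k\right]$ (the paper cites symmetric Bernstein approximants where you symmetrize ordinary Bernstein polynomials by averaging over $S_r$), realize the resulting functionals as cylindrical functions via moments $\scalprod{\mu}{\psi_j}$ and $\mu(\R^d)$ with a smooth cut-off exploiting $\mu(\R^d)\ge\frac{1}{k}$, and control all seven quantities in \eqref{eqn: c2lnrom} by the explicit derivative formulas, which involve at most two total derivatives of $\varphi$. Your packaging of the convergence step as linearity of $\varphi\mapsto f_\varphi$ plus the operator-norm bound $\norm{f_\varphi}_{\mathrm{C_L^2}(\mathcal{H}_N^k)}\le C(N,k,r)\norm{\varphi}_{\mathrm{C_b^2}}$ is a tidier formulation of what the paper checks derivative by derivative via dominated convergence, and your cut-off $\chi$ making the monomials genuinely $\mathrm{C_b^2}(\R^d)$ test functions is in fact slightly more careful than the paper's treatment of the $g_{i,j}$.
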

\begin{proof}
Since $K_N^r\times \left[\frac{1}{k},k\right]\subset\R^{dr + 1}$ is compact, we can find a family of symmetrical polynomials $\{\varphi_n\}_{n\geq1}$ which approximates $\varphi$ in $\rm C^2$ norm (we can choose a symmetric version of Bernstein approximants, see for instance \cite[Section 3.2]{bucurpaltineanu}). More precisely,
\begin{equation*}	
	\varphi_n(x_1,\dots,x_r,z) = \sum_{i=1}^{\ell(n)}h_i(z)\prod_{j=1}^rg_{i,j}(x_j),
\end{equation*}
where $\ell(n)\in\N$, $g_{i,j}\colon K_N\to\R$ and $h_i\colon \left[\frac{1}{k},k\right]\to\R$ are monomials, for every $i=1,\dots,\ell(n)$ and $j=1,\dots,r$. Let us introduce $f^n(\mu)\defeq \scalprod{\frac{\mu^{\times r}}{\mu(\R^d)^r}}{\varphi_n(\cdot,\dots,\cdot,\mu(\R^d)}$, for every $n\geq1$. This sequence is in $\mathcal{C}_2(\Mp)$, indeed, for every $n\geq1$, \begin{equation*}
	f^n(\mu) 
	= g^n(\scalprod{\mu}{g_{1,1}},\dots,\scalprod{\mu}{g_{\ell(n),r}},\mu(\R^d)),
\end{equation*}
where $g^n$ is defined over $\R^{dr\ell(n)}\times \R$ as
\begin{equation*}
	g^n(\xi_{1,1},\dots,\xi_{\ell(n),r},\zeta) = \sum_{i=1}^{\ell(n)}\zeta^{e(i)-1}\prod_{j=1}^r \xi_{i,j},
\end{equation*}
where we denoted with $e(i)$ the exponents in the monomials $h_i$. This function $g^n$ is symmetric, twice continuously differentiable but not bounded. However, we can consider the product with a smooth symmetric (in the first $r$ variables) cut-off function, equal to $1$ in the rectangle $[-R-1,R+1]^{dr\ell(n)}\times\left[\frac{1}{k},k\right]$, where $R\defeq\max\{|\scalprod{\mu}{g_{1,1}}|,\dots,|\scalprod{\mu}{g_{\ell(n),r}}|\}$, and vanishing smoothly outside. This function, that we denote again with $g^n$ for simplicity, is symmetric and in $\rm C_b^2(\R^{dr\ell(n)}\times\R)$. We can notice that at this point we needed the fact that $\mu(\R^d) \ge 1/k$, which was included in the definition of $\mathcal{H}^k_N$, and not only $\mu(\R^d) > 0$. Indeed, we need  $\varphi\in \mathrm {C_b^2}\left(K_N^r\times \left[\frac{1}{k},k\right]\right)$ in order to  introduce the sequence $\{\varphi_n\}_{n\geq1}$ and consequently $\{f^n\}_{n\geq 1}$ .\\\\ 
Let us study the convergence of $\{f^n\}_{n\geq1}$ in norm $\norm{\cdot}_{\mathrm {C^2_L}(\mathcal{H}_N^k)}$. First, $\sup_{\mu\in\mathcal{H}_N^k}\left\vert f^n(\mu) - f(\mu)\right\vert$ tends to $0$ as $n\to+\infty$, thanks to the uniform convergence of $\{\varphi^n\}_{n\geq1}$ and the bound on $\mu(\R^d)$. Regarding the first-order linear functional derivative, we have that for every $\mu\in\mathcal{H}_N^k$ and $x\in K_N$,
\begin{multline}\label{eqn: approlfcilgencil}
	\lf f^n(\mu,x) =r\scalprod{\frac{\mu^{\times(r-1)}}{\mu(\R^d)^r}}{\varphi_n(\cdot_1,\dots,\cdot_{r-1},x,\mu(\R^d))} + \scalprod{\frac{\mu^{\times r}}{\mu(\R^d)^r}}{\partial_z\varphi^n(\cdot,\dots,\cdot,\mu(\R^d))}\\-r\scalprod{\frac{\mu^{\times r}}{\mu(\R^d)^{r+1}}}{\varphi^n(\cdot,\dots,\cdot,\mu(\R^d))}.
\end{multline}
Due to dominated convergence, expression \eqref{eqn: approlfcilgencil} converges as $n\to+\infty$ to the same one with $\varphi$ instead of $\varphi^n$, which is $\lf f$. Analogously, $\lf^2 f^n(\mu,x,y)\to\lf f^n(\mu,x,y)$ as $n\to+\infty$ for every $\mu\in\mathcal{H}_N^k$ and $x,y\in K_N$.
Moreover, since $\varphi_n$ and its derivatives up to order two converge uniformly to $\varphi$ and its derivatives, we have that
\begin{equation*}
	\sup_{\mu\in\mathcal{H}_N^k, \atop x\in K_N}\left\vert \lf f^n(\mu,x) - \lf f(\mu,x)\right\vert\to0, \qquad \sup_{\mu\in\mathcal{H}^k_N, \atop x,y\in K_N}\left\vert \lf^2 f^n(\mu,x,y) - \lf^2 f(\mu,x,y)\right\vert\to0,
\end{equation*}
as $n\to+\infty$. To conclude the discussion on the convergence, we are able to bring the spatial derivative inside the integral in the first row of \eqref{eqn: approlfcilgencil} and so for every $\mu\in\mathcal{H}_N^k$ and $x\in\R^d$
\begin{multline*}
		\diff _\mu f^n(\mu,x) =r\scalprod{\frac{\mu^{\times(r-1)}}{\mu(\R^d)^r}}{\mathrm D_x\varphi_n(\cdot_1,\dots,\cdot_{r-1},x,\mu(\R^d))}\\\to r\scalprod{\frac{\mu^{\times(r-1)}}{\mu(\R^d)^r}}{\mathrm D_x\varphi(\cdot_1,\dots,\cdot_{r-1},x,\mu(\R^d))} = \diff _\mu f(\mu,x),
\end{multline*}
as $n\to+\infty$. As before, the convergence takes place also uniformly since the first-order derivative of $\varphi_n$ converges uniformly to the one of $\varphi$. In the same way we can show the uniform convergence over $\mathcal{H}_N^k\times K_N$ (or $\mathcal{H}_N^k\times K_N\times K_N$) of $\{\diff _\mu^2 f^n\}_{n\geq1}$, $\{\diff _x\diff _\mu f^n\}_{n\geq1}$, $\{\diff _x\lf^2 f^n\}_{n\geq1}$ to $\diff _\mu^2 f$, $\diff _x\diff _\mu f$, $\diff _x\lf^2f$ respectively. 
\end{proof}
\begin{remark}\label{rmk: lemmasprob}
	We can notice that both the results of Lemma \ref{lemma: approxcilgen} and Lemma \ref{lemma: approxcilgencil} work for functions over $\mathcal{P}_2\left(K_N\right)$, and the approximants remain well defined over $\mathcal{P}_2\left(K_N\right)$. Regarding Lemma \ref{lemma: approxsuppcomp}, we can not simply cut the measure support, since the mass must be kept normalized. In this case, we can approximate $\mu\in\ptwo$ with the family of probabilities obtained by concentrating all the mass outside the compact $K_N$ in the origin, namely $\{\lambda^N_1\mu + \lambda^N_2\delta_0\}_{N\geq1}$, where $\lambda^N_1$ and $\lambda^N_2$ are smooth positive cut-off functions, $\lambda^N_1 = 0$ in $K_{N+1}$,  $\lambda^N_2 = 0$ in $K_{N}^\mathsf{c}$ and $\lambda^N_1+\lambda^N_2 = 1$.  This sequence converges weakly to $\mu\in\ptwo$ and the approximation properties in Lemma \ref{lemma: approxsuppcomp} still hold for $\{\hat{u}^N\}_{N\geq1}\defeq \{u(\lambda^N_1\mu + \lambda^N_2\delta_0)\}_{N\geq1}$.  
\end{remark}
\begin{remark}
	From the proofs of Lemma \ref{lemma: approxsuppcomp}, we can see that if we ask for $u\in \rm C^2(\mathcal{M}^+_{2,k}(\R^d))$, then the approximation for $u$ and its first- and second-order linear functional derivatives still holds. The same applies also for Lemma \ref{lemma: approxcilgen}.
\end{remark}
\begin{remark}
	At the beginning of this section we used the fact that a function $u$ on $\Mp$ can be regarded as a function defined on $\R_+\times\prob(\R^d)$ through the formula
\begin{equation*}
	v(\lambda,\pi) = u(\lambda\pi),\quad\lambda>0, \ \pi\in\prob(\R^d),
\end{equation*}
and conversely
\begin{equation*}
	u(\mu) = v(\mu(\R^d),\frac{\mu}{\mu(\R^d)}),\quad \mu\in\Mp.
\end{equation*}
Probably this fact could be used to deduce some of the results we proved for functions over $\Mp$ (see for instance Section \ref{ssec: someprop}) directly from earlier results for functions over $\prob(\R^d)$. However, we decided to give the proofs directly in the context of $\Mp$ since in this case the computations are more straightforward and since this is the natural framework to study the Zakai equation.
\end{remark}

\section{Two equations related to nonlinear filtering}\label{sec: eqnfilter}
In this section we introduce the filtering equations. Our main results in later sections concern their associated Kolmorogov equations. Here we introduce notation and basic assumptions and we recall well-posedness results and first properties, as well as the connection with the filtering problem.
The nonlinear filtering equations, namely the Zakai equation and the Kushner-Stratonovich equation, arise naturally when the problem of nonlinear filtering is studied. We will use the approach adopted by Szpirglas in \cite{szpirglas} and later by Heunis and Lucic in \cite{heunislucic}, where existence, uniqueness and regularity properties of the equations are proved under appropriate assumptions, without direct reference to the filtering problem. \\

Let us consider a finite time interval $[0,T]$, a complete probability space $\spprob$ with a filtration $\{\mathcal{F}_t\}_{t\in[0,T]}$ which satisfies the usual conditions. Let $W = \{W_t,t\in[0,T]\}$ and $B = \{B_t,t\in[0,T]\}$ be two independent $d$-dimensional $\{\mathcal{F}_t\}$-Brownian motions (we take them with the same dimension for simplicity) and let us consider the $d$-dimensional process $X = \{X_t,t\in[0,T]\}$, called signal process, defined by
\begin{equation}\label{eqn: signal}
	\de X_t = f(X_t)\de s + \sigma(X_t)\de W_t + \bar{\sigma}(X_t)\de B_t,\quad X_0\in L^2(\Omega,\mathcal{F}_0), 
\end{equation}
where the Borel measurable mappings $f\colon\R^d\to\R^d$, $\sigma\colon\R^{d}\to\R^{d\times d}$ and $\bar{\sigma}\colon\R^{d}\to\R^{d\times d}$ will be chosen later in order to have existence of a strong solutions, uniqueness (up to indistinguishability), continuity and Markovianity of $X$. The idea behind the stochastic filtering is that we can not observe directly the signal, but we can only observe a process $Y=\{Y_t,t\in[0,T]\}$, called observation process, defined by
\begin{equation}\label{eqn: obs}
	Y_t = \int_0^t h(X_s)\de s + B_t,
\end{equation}
where $h\colon\R^d\to\R^d$ is a Borel measurable mapping such that $\E{\int_0^T |h(X_s)|^2\de s}<\infty$. If we introduce the observation filtration $\{\mathcal{F}_t^Y\}_{t\in[0,T]}$, where $\mathcal{F}_t^Y$ is the completion with respect to the $\mathbb{P}$-null sets of the $\sigma$-algebra generated by $Y$ up to the time $t$, we can say that the filtering problem is to find a $\prob(\R^d)$-valued process $\Pi = \{\Pi_t,t\in[0,T]\}$, called filter, such that 
\begin{equation}\label{eqn: filtrel}
	\Pi_t(\varphi)=\E{\varphi(X_t)\vert\mathcal{F}^Y_{t}}, 
\end{equation} almost surely, for every $t\in[0,T]$ and $\varphi\in\mathrm{B_b}(\R^d)$. Due to the presence of the observation noise also in the signal equation \eqref{eqn: signal}, we will refer to this problem as stochastic filtering with correlated noise, in contrast with the problem in which $\bar\sigma$ is null, called without correlated noise.
Now, let us state the assumptions on the coefficients necessary for our discussion. They are not always necessary together (see Remark \ref{rmk: lesshp}), but we group them for simplify the exposition.
\begin{hypotheses}\label{hp: generali}
 All the mappings $f,\sigma,\bar{\sigma},h$ are taken Borel-measurable. Moreover we assume:
	\begin{enumerate}
		\item[a.] the mappings $f\colon\R^d\to\R^d$, $\sigma\colon\R^{d}\to\R^{d\times d}$ and $\bar{\sigma}\colon\R^{d}\to\R^{d\times d}$ are Lipschitz continuous;
		\item[b.] the mapping $a:=\sigma\sigma^\top\colon\R^d\to\R^{d\times d}$ is uniformly elliptic, that is there exists $\lambda>0$ such that $\sum_{i,j=1}^d a_{ij}\xi_i\xi_j\geq\lambda\lvert\xi\rvert^2$ for every $x,\xi\in\R^d$;
		\item[c.] the mappings $f,\sigma,\bar\sigma,h$ are bounded.
	\end{enumerate}
\end{hypotheses}

\noindent Under these conditions, the signal is a uniquely characterized Markov process and moreover there exists a $\prob(\R^d)$-valued $\{\mathcal{F}^Y_{t}\}$-optional process which satisfies \eqref{eqn: filtrel}. 

Let us introduce two differential operator, that are defined for every $\psi\in\mathrm{C^2_b}(\R^d)$ by
\begin{equation}\label{eqn: opdiff}
\begin{aligned}
	A\psi(x) &\defeq \sum_{i=1}^d f_i(x)\partial_i \psi(x) + \frac{1}{2}\sum_{i,j=1}^d\left\{\left(\sigma\sigma^\top\right)_{ij}(x)\partial_{ij}\psi(x)+\left(\bar\sigma\bar\sigma^\top\right)_{ij}(x)\partial_{ij}\psi(x)\right\},\quad x\in\R^d,\\
	B_k\psi(x) &\defeq \sum_{i=1}^d \bar{\sigma}_{ik}(x)\partial_i\psi(x) ,\quad x\in\R^d,k=1,\dots d.
\end{aligned}
\end{equation}
It has been proved (see for instance \cite[Chapter 3]{baincrisan}) that the process $\Pi$ satisfies the following stochastic differential equation, called Kushner-Stratonovich equation (or Fujisaki-Kallianpur-Kunita):
\begin{equation}\label{eqn: myfirstks}
	\Pi_t(\psi) = \pi(\psi) + \int_0^t \Pi_s(A\psi)\de s + \int_0^t \left( \Pi_s(h \psi+ B\psi) - \Pi_s(\psi)\Pi_s(h)\right)\cdot\de I_s,\quad \pi =\mathcal{L}(X_0),
\end{equation}
for every $\psi\in\mathrm{C^2_b}(\R^d)$, where $I_t\defeq Y_t-\int_0^t\Pi_s(h)\de s$, $t\in [0,T]$ is called innovation process and it is a $d$-dimensional $\{\mathcal{F}_t^Y\}$-Brownian motion. Another essential result in this framework is that the equation \eqref{eqn: myfirstks} can be rephrased into a linear equation for the evolution of the unnormalized law of the filter, namely the $\mathcal{M}^+(\R^d)$-valued process $\rho = \{\rho_t,t\in[0,T]\}$. In particular, one can prove that the process $\rho$ satisfies the so-called Zakai equation:
\begin{equation}\label{eqn: mufirstzakai}
	\rho_t(\psi) = \pi(\psi) + \int_0^t \rho_s(A\psi)\de s + \int_0^t \rho_s(h \psi+ B\psi)\cdot\de Y_s,\quad \psi \in\mathrm{C^2_b}(\R^d),
\end{equation}
with $\rho_t(\R^d)>0$ for every $t\in[0,T]$ almost surely and $\Pi_t(\psi) = \rho_t(\psi) / \rho_t(\R^d)$ for every $t\in[0,T]$ and $\psi\in\mathrm{C^2_b}(\R^d)$.
Moreover, one can introduce the martingale
\begin{equation*}
	 Z_t = \exp\left\{ -\int_0^t h(X_s)\cdot\de B_s - \frac{1}{2}\int_0^t |h(X_s)|^2\de s\right\},\quad t\in[0,T],
\end{equation*}
and set $\de\mathbb{Q}=Z_T\de\mathbb{P}$. Then, it can be shown that the observation process $Y$, which drives the stochastic integral in  \eqref{eqn: mufirstzakai}, is a Brownian motion under $\mathbb{Q}$.

\begin{remark}
	These results hold with hypotheses less restrictive than Hypotheses \ref{hp: generali}. For more precise conditions and detailed proofs of the previous results, see for instance \cite[Chapters 2,3]{baincrisan} and the references therein.
\end{remark}
\subsection{Equations of nonlinear filtering in weak form}
We present now the framework for the filtering equations introduced in \cite{szpirglas} and extended more recently by \cite{heunislucic} to our setting.  The main idea is to study the measure-valued stochastic differential equations of nonlinear filtering without relying on the original filtering problem, introducing proper notions of weak solution, pathwise uniqueness and uniqueness in law. We will now present all these definitions, following the exposition in \cite{heunislucic}.
\begin{definition}\label{def: weaksolks}
	The pair $\{(\tilde{\Omega},\mathcal{\tilde{F}},\{\tilde{\mathcal{F}}_t\},\tilde{\mathbb{P}}),(\tilde{\Pi}_t,\tilde{I_t})\}$ is a weak solution to the Kushner-Stratonovich equation starting at $\pi\in\prob(\R^d)$ if:
	\begin{enumerate}
		\item[i.] $(\tilde{\Omega},\mathcal{\tilde{F}},\{\tilde{\mathcal{F}}_t\},\tilde{\mathbb{P}})$ is a complete filtered probability space.
		\item[\textit{ii}.] $\tilde{I} = \{\tilde{I}_t,t\in[0,T]\}$ is an $\R^d$-valued $\{\tilde{\mathcal{F}}_t\}$-Brownian motion on $(\tilde{\Omega},\mathcal{\tilde{F}},\tilde{\mathbb{P}})$.
		\item[\textit{iii}.] $\tilde{\Pi}=\{\tilde{\Pi}_t,t\in[0,T]\}$ is a $\prob(\R^d)$-valued continuous $\{\tilde{\mathcal{F}}_t\}$-adapted process such that
		\begin{equation*}
			\tilde{\mathbb{P}}\left(\int_0^T\sum_{i=1}^d\tilde\Pi_s(|h_i|)^2\de s<\infty\right) = 1,
		\end{equation*}
		and for every $\psi\in\mathrm{C^2_b}(\R^d)$ it holds
		\begin{equation}\label{eqn: weakks}
			\tilde\Pi_t(\psi) = \pi(\psi) + \int_0^t \tilde\Pi_s(A\psi)\de s + \int_0^t \left( \tilde\Pi_s(h \psi+ B\psi) - \tilde\Pi_s(\psi)\tilde\Pi_s(h)\right)\cdot\de \tilde I_s,
		\end{equation}
		for every $t\in[0,T]$, almost surely.
	\end{enumerate}
\end{definition}
Analogously, we can define the Zakai equation's weak solutions:
\begin{definition}\label{def: weaksolz}
	The pair $\{(\tilde{\Omega},\mathcal{\tilde{F}},\{\tilde{\mathcal{F}}_t\},\tilde{\mathbb{Q}}),(\tilde{\rho_t},\tilde{Y_t})\}$ is a weak solution to the Zakai equation starting at $\mu\in\mathcal{M}^+(\R^d)$ if:
	\begin{enumerate}
		\item[\textit i.] $(\tilde{\Omega},\mathcal{\tilde{F}},\{\tilde{\mathcal{F}}_t\},\tilde{\mathbb{Q}})$ is a complete filtered probability space.
		\item[\textit {ii}.] $\tilde{Y} = \{\tilde{Y}_t,t\in[0,T]\}$ is an $\R^d$-valued $\{\tilde{\mathcal{F}}_t\}$-Brownian motion on $(\tilde{\Omega},\mathcal{\tilde{F}},\tilde{\mathbb{Q}})$.
		\item[\textit {iii}.] $\tilde{\rho}=\{\tilde{\rho}_t,t\in[0,T]\}$ is a $\mathcal{M}^+(\R^d)$-valued continuous $\{\tilde{\mathcal{F}}_t\}$-adapted process such that
		\begin{equation*}
			\tilde{\mathbb{Q}}\left(\int_0^T\sum_{i=1}^d\tilde{\rho}_s(|h_i\psi + B_i\psi|)^2\de s<\infty\right) = 1,
		\end{equation*}
		and for every $\psi\in\mathrm{C^2_b}(\R^d)$ it holds
		\begin{equation} \label{eqn: weakzakai}
			\tilde\rho_t(\psi) = \mu(\psi) + \int_0^t \tilde\rho_s(A\psi)\de s + \int_0^t \tilde\rho_s(h \psi+ B\psi)\cdot\de \tilde Y_s,
		\end{equation}
		for every $t\in[0,T]$, almost surely.
	\end{enumerate}
\end{definition}
\begin{remark}\label{rmk: bddmass}
	A useful result, proved in \cite{heunislucic} (see Fact 3.2), is the fact that the trajectories of weak solutions to the Zakai equation have total mass that does not touch zero and with uniformly bounded moments. More precisely, if $\{(\tilde{\Omega},\mathcal{\tilde{F}},\{\tilde{\mathcal{F}}_t\},\tilde{\mathbb{Q}}),(\tilde{\rho_t},\tilde{Y_t})\}$ is a weak solution to the Zakai equation starting at $\mu\in\mathcal{M}^+(\R^d)$, then $\tilde{\rho}_t(\R^d)>0$ for every $t\in[0,T]$, $\tilde{\mathbb{Q}}$ almost surely. Moreover for every $\alpha\in(1,+\infty)$ there exists a positive constant $\gamma(\alpha,\mu)$ such that 
\begin{equation*}
	\mathbb{E}^{\tilde{\mathbb{Q}}}\left[\sup_{t\in[0,T]} |\tilde{\rho}_t(\R^d)|^\alpha\right]\leq\gamma(\alpha,\mu).
\end{equation*} 
\end{remark}
\begin{remark}\label{rmk: measinitcond}
	In \cite{szpirglas} and \cite{heunislucic}, the Zakai equation is always taken with initial condition in the space of probability measures. However, it easy to consider the case in which the initial condition is a positive measure, different from the null measure. Indeed one can always reconduct the problem to the one starting from a probability by performing a standardization, thanks to the linearity of the Zakai equation.
\end{remark}
In the following lemma we show that if a weak solution to a filtering equation starts from a measure with finite second moment, then its trajectories will take value in a space of measures with finite second moment. We postpone the proof of Lemma \ref{lemma: solinm2p2} to \hyperref[app: a]{Appendix} to avoid technicalities in this expository chapter on nonlinear filtering.
\begin{lemma}\label{lemma: solinm2p2}
	Let $\{(\tilde{\Omega},\mathcal{\tilde{F}},\{\tilde{\mathcal{F}}_t\},\tilde{\mathbb{Q}}),(\tilde{\rho_t},\tilde{Y_t})\}$ be a weak solution to the Zakai equation starting at $\mu\in\mathcal{M}_2^+(\R^d)$. Then $\tilde{\rho_t}\in\mathcal{M}_2^+(\R^d)$ for every $t\in[0,T]$, $\tilde{\mathbb{Q}}$-almost surely. Similarly, if $\{(\tilde{\Omega},\mathcal{\tilde{F}},\{\tilde{\mathcal{F}}_t\},\tilde{\mathbb{P}}),(\tilde{\Pi}_t,\tilde{I_t})\}$ is a weak solution to the Kushner-Stratonovich equation starting at $\pi\in\mathcal{P}_2(\R^d)$, then $\tilde{\Pi_t}\in\mathcal{P}_2(\R^d)$ for every $t\in[0,T]$, $\tilde{\mathbb{P}}$-almost surely.
\end{lemma}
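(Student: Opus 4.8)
The plan is to recover the second-moment bound by testing the Zakai (resp.\ Kushner--Stratonovich) equation against a sequence of $\mathrm{C^2_b}(\R^d)$ functions increasing to $x\mapsto|x|^2$, deriving an estimate uniform in the approximation parameter, and then passing to the limit by monotone convergence. Since $|x|^2\notin\mathrm{C^2_b}(\R^d)$ it cannot be plugged directly into \eqref{eqn: weakzakai}, so the first task is to build suitable truncations. I would fix a smooth nondecreasing profile $\zeta\colon[0,+\infty)\to[0,+\infty)$ with $\zeta(r)=r$ for $r\le1$, $\zeta$ constant for $r\ge4$, $0\le\zeta'\le1$ and $\|\zeta''\|_\infty<\infty$, and set $\psi_n(x)\defeq n^2\zeta(|x|^2/n^2)$. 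Then $\psi_n\in\mathrm{C^2_b}(\R^d)$, $0\le\psi_n\le|x|^2$ and $\psi_n\uparrow|x|^2$ pointwise; the crucial feature is that $\nabla\psi_n(x)=2\zeta'(|x|^2/n^2)x$ vanishes for $|x|\ge2n$ and obeys $|\nabla\psi_n|\le C\sqrt{1+\psi_n}$, while $|\mathrm{D}^2\psi_n|\le C$, with $C$ independent of $n$.

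Combining these bounds with the boundedness of $f,\sigma,\bar\sigma,h$ from Hypotheses \ref{hp: generali} and the form of the operators in \eqref{eqn: opdiff} yields the two key algebraic inequalities, both uniform in $n$: $|A\psi_n(x)|\le C(1+\psi_n(x))$ and $|h(x)\psi_n(x)+B\psi_n(x)|\le C(1+\psi_n(x))$. Inserting $\psi_n$ into \eqref{eqn: weakzakai} and taking expectations under $\tilde{\mathbb{Q}}$, I note that for each fixed $n$ the function $\psi_n$ is bounded, so the integrand $\tilde\rho_s(h\psi_n+B\psi_n)$ of the stochastic integral is dominated by $C_n\,\tilde\rho_s(\R^d)$, which is square-integrable on $[0,T]$ by Remark \ref{rmk: bddmass}; hence that integral is a genuine martingale and disappears under expectation.

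Writing $g_n(t)\defeq\mathbb{E}^{\tilde{\mathbb{Q}}}[\tilde\rho_t(\psi_n)]$ (finite for each $n$), using the drift inequality together with $\mu(\psi_n)\le\mu(|\cdot|^2)<\infty$ and the uniform mass bound $\mathbb{E}^{\tilde{\mathbb{Q}}}[\tilde\rho_s(\R^d)]\le\gamma$ from Remark \ref{rmk: bddmass}, I obtain
\[
g_n(t)\le \mu(|\cdot|^2)+C\gamma T+C\int_0^t g_n(s)\,\de s,
\]
so Gronwall's lemma gives $g_n(t)\le K$ for a constant $K$ independent of $n$ and of $t\in[0,T]$. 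Monotone convergence then yields $\mathbb{E}^{\tilde{\mathbb{Q}}}[\tilde\rho_t(|\cdot|^2)]=\lim_n g_n(t)\le K<\infty$, whence $\tilde\rho_t\in\mathcal{M}_2^+(\R^d)$ almost surely for each fixed $t$. To upgrade this to the simultaneous statement for all $t\in[0,T]$, I would repeat the computation with $\sup_{t\le T}$ inside the expectation, control the martingale term by the Burkholder--Davis--Gundy inequality, and absorb the resulting contribution into the left-hand side via Young's inequality before applying Gronwall; the lower semicontinuity of $\mu\mapsto\mu(|\cdot|^2)$ and the pathwise continuity of $\tilde\rho$ then close the argument.

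The Kushner--Stratonovich case is entirely analogous and in fact simpler: testing \eqref{eqn: weakks} against $\psi_n$, the total mass is identically $1$, so $\tilde\Pi_s(A\psi_n)\le C(1+\tilde\Pi_s(\psi_n))$ and the martingale integrand $\tilde\Pi_s(h\psi_n+B\psi_n)-\tilde\Pi_s(\psi_n)\tilde\Pi_s(h)$ is again bounded by $C(1+\tilde\Pi_s(\psi_n))$, so the same Gronwall-plus-monotone-convergence scheme applies without even invoking Remark \ref{rmk: bddmass}. The main obstacle I anticipate is the apparent circularity in estimating the stochastic integral, whose integrand involves precisely the second moment one is trying to bound; this is resolved by the two-layer structure above, in which the boundedness of each fixed truncation $\psi_n$ (with the mass-moment bound of Remark \ref{rmk: bddmass}) makes the stochastic integral a true martingale, while the separate, $n$-uniform algebraic inequalities feed a single Gronwall estimate that survives the passage $n\to\infty$.
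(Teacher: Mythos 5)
Your proof is correct, but it takes a genuinely different route from the paper's. The paper works in $L^2(\Omega)$: it squares the tested equation, applies the It\^o isometry (and later Burkholder), and is then forced into a two-stage bootstrap — first it propagates the \emph{first} moment, using a smoothed version of $x\mapsto|x|$ with bounded derivatives truncated by cutoffs $\phi^N$ whose $\mathrm{C}^2$ bounds are uniform in $N$, and only afterwards treats $\psi(x)=|x|^2$; the intermediate stage is unavoidable there because with $\psi=|x|^2$ the gradient terms $f\cdot\mathrm{D}_x\psi$ and $B\psi=2\bar\sigma^\top x$ inject the first moment into the $L^2$ estimates, so \eqref{eqn: bddfirstmoment} must be available beforehand. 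You instead run a single-stage $L^1(\Omega)$ argument: the scaled truncation $\psi_n(x)=n^2\zeta(|x|^2/n^2)$ with the self-improving bound $|\mathrm{D}_x\psi_n|\le C\sqrt{1+\psi_n}$ makes the drift inequality $|A\psi_n|\le C(1+\psi_n)$ close on itself, the martingale is killed by taking plain expectations (legitimately, since for fixed $n$ the integrand is dominated by $C_n\tilde\rho_s(\R^d)$ and Remark \ref{rmk: bddmass} applies), and one Gronwall uniform in $n$ plus a BDG--Young pass for the supremum finishes the proof; the first-moment bootstrap disappears entirely. What each buys: the paper's route yields the stronger $L^2(\Omega)$ bounds \eqref{eqn: boundmomentoprimotfixed}--\eqref{eqn: bddfirstmoment} on $\scalprod{\rho_t}{\psi}$, while yours is more economical and, for the Kushner--Stratonovich case, even bypasses Remark \ref{rmk: bddmass}. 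One small repair: the properties you impose on $\zeta$ do not by themselves guarantee that $n\mapsto\psi_n(x)$ is nondecreasing (that requires $r\zeta'(r)\le\zeta(r)$, e.g.\ $\zeta$ concave with $\zeta(0)=0$), so either choose $\zeta$ concave or replace monotone convergence by Fatou's lemma, which suffices since $0\le\psi_n\le|x|^2$ and $\psi_n\to|x|^2$ pointwise.
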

Regarding the uniqueness, we have the following two definitions, which follow the classical Yamada-Watanabe formalism.
\begin{definition} The Kushner-Stratonovich equation has the pathwise uniqueness property if: given two weak solutions $\{(\tilde{\Omega},\mathcal{\tilde{F}},\{\tilde{\mathcal{F}}_t\},\tilde{\mathbb{P}}),(\tilde{\Pi}^1_t,\tilde{I_t})\}$ and $\{(\tilde{\Omega},\mathcal{\tilde{F}},\{\tilde{\mathcal{F}}_t\},\tilde{\mathbb{P}}),(\tilde{\Pi}^2_t,\tilde{I_t})\}$ of the equation starting at $\pi\in\prob(\R^d)$, it holds that
\begin{equation*}
	\tilde{\mathbb{P}}\left(\tilde{\Pi}_t^1 =\tilde{\Pi}_t^2\quad, \forall t\in[0,T]\right) = 1.
\end{equation*}
In the same way we state the pathwise uniqueness property for the Zakai equation. 
\end{definition}
\begin{definition} The Kushner-Stratonovich equation has the uniqueness in joint law property if: given two weak solutions $\{(\tilde{\Omega},\mathcal{\tilde{F}},\{\tilde{\mathcal{F}}_t\},\tilde{\mathbb{P}}),(\tilde{\Pi}_t,\tilde{I_t})\}$ and $\{(\hat{\Omega},\mathcal{\hat{F}},\{\hat{\mathcal{F}}_t\},\hat{\mathbb{P}}),$ $(\hat{\Pi}_t,\hat{I_t})\}$ of the equation starting at $\pi\in\prob(\R^d)$, it holds that the processes $(\tilde{\Pi},\tilde I) = \{(\tilde{\Pi}_t,\tilde I_t),t\in[0,T]\}$ and $(\hat{\Pi}, \hat I) = \{(\hat{\Pi}_t, \hat I_t),t\in[0,T]\}$ have the same finite dimensional distributions, where we endowed $\mathcal{P}(\R^d)$ with the Borel $\sigma$-algebra induced by the weak convergence topology. In the same way we define the uniqueness in joint law property for the Zakai equation.
\end{definition}
The main result in \cite{heunislucic} is the following theorem regarding the uniqueness for the two equations of nonlinear filtering:
\begin{theorem}[\cite{heunislucic}]\label{thm: uniqhl} Assume that Hypotheses \ref{hp: generali} hold. Then:
	\begin{enumerate}
		\item[i.] the Zakai equation has the pathwise uniqueness and the uniqueness in joint law properties;
		\item[ii.] the Kushner-Stratonovich equation has the uniqueness in joint law property.
	\end{enumerate}
\end{theorem}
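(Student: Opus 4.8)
The plan is to treat the Zakai equation as the primary object and to deduce the Kushner--Stratonovich statement from it, exploiting that Zakai is linear in the measure argument whereas the normalization $\tilde\Pi_t = \tilde\rho_t/\tilde\rho_t(\R^d)$ linking the two is nonlinear. For the Zakai equation I would prove pathwise uniqueness as the core technical step; the uniqueness in joint law would then follow essentially for free by the Yamada--Watanabe principle, since pathwise uniqueness together with the existence of weak solutions available in this framework yields uniqueness in law. This is also why one cannot expect more than uniqueness in law for Kushner--Stratonovich: the nonlinearity of the normalization obstructs the transfer of the \emph{pathwise} statement.

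For pathwise uniqueness of the Zakai equation, let $\tilde\rho^1$ and $\tilde\rho^2$ be two weak solutions driven by the same Brownian motion $\tilde Y$ on the same space, starting from the same $\mu$. By linearity of \eqref{eqn: weakzakai}, the signed-measure-valued difference $\tilde\zeta_t \defeq \tilde\rho^1_t - \tilde\rho^2_t$ solves the same equation with null initial datum, so it suffices to show $\tilde\zeta \equiv 0$. I would argue by duality: for fixed terminal time $T$ and test function $\psi\in\mathrm{C^2_b}(\R^d)$, introduce the family $v_s$ solving the associated backward stochastic equation that propagates $\psi$ under $A$ together with the stochastic coupling $h + B$ against $\tilde Y$ (with a backward It\^o integral). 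Applying the It\^o formula to $s\mapsto\scalprod{\tilde\zeta_s}{v_s}$, the drift and the stochastic cross terms are designed to cancel, so this process is constant; evaluating at the endpoints gives $\scalprod{\tilde\zeta_T}{\psi} = \scalprod{\tilde\zeta_0}{v_0} = 0$. Since $\psi$ and $T$ are arbitrary and trajectories are weakly continuous, this forces $\tilde\zeta_t \equiv 0$.

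The main obstacle is the construction of a sufficiently regular dual $v_s$ in the density-free setting. Under Hypotheses \ref{hp: generali} the coefficients are only Lipschitz, bounded and elliptic, so the backward equation need not admit a $\mathrm{C^2_b}$ solution directly, and one cannot appeal to Hilbert-space regularity of densities as in the classical treatments. I would circumvent this by a mollification argument, regularizing the coefficients to obtain smooth dual solutions, using the uniform ellipticity of $\sigma\sigma^\top$ to secure the smoothing estimates needed for $v_s$, and then passing to the limit in the duality identity. An alternative route, which only needs the well-posedness of the martingale problem for $A$ (guaranteed by the Lipschitz and ellipticity assumptions), is to reconstruct an associated signal process $X$ from any weak solution and identify $\tilde\rho$ with the unnormalized conditional law through a Kallianpur--Striebel representation; this yields uniqueness in joint law directly, and is the more robust approach when coefficient regularity is limited.

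Finally, I would transfer the result to the Kushner--Stratonovich equation. Given a weak solution $(\tilde\Pi, \tilde I)$, set $\tilde Y_t \defeq \tilde I_t + \int_0^t \scalprod{\tilde\Pi_s}{h}\,\de s$ and let $N_t$ solve the scalar equation $\de N_t = N_t\,\scalprod{\tilde\Pi_t}{h}\cdot\de\tilde Y_t$ with $N_0 = 1$; then $\tilde\rho_t \defeq N_t\,\tilde\Pi_t$ is a weak solution to the Zakai equation, as one checks by applying the It\^o product rule to $N_t\,\scalprod{\tilde\Pi_t}{\psi}$ and comparing with \eqref{eqn: weakzakai}. Conversely $(\tilde\Pi, \tilde I)$ is recovered from $(\tilde\rho, \tilde Y)$ by normalization and by the definition of the innovation. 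Since these transformations are measurable and mutually inverse, the uniqueness in joint law already established for the Zakai equation transfers to the joint law of $(\tilde\Pi, \tilde I)$, giving the claim.
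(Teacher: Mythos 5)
You should first note that the paper itself offers no proof of Theorem \ref{thm: uniqhl}: it is imported verbatim from \cite{heunislucic}, so your sketch must be measured against that proof and against internal correctness. Measured so, your final transfer step has a concrete gap. The pair $(\tilde\rho,\tilde Y)$ you construct from a Kushner--Stratonovich solution is \emph{not} a weak solution to the Zakai equation in the sense of Definition \ref{def: weaksolz}: your product-rule computation is correct pathwise, but under $\tilde{\mathbb P}$ the process $\tilde Y_t = \tilde I_t + \int_0^t\scalprod{\tilde\Pi_s}{h}\,\de s$ is a semimartingale with drift, not a Brownian motion, and the definition of weak solution requires the driver to be a Brownian motion under the ambient measure. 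You must introduce the exponential martingale $\chi$ and pass to $\de\tilde{\mathbb Q} = \chi_T\,\de\tilde{\mathbb P}$ by Girsanov, exactly as at the end of Section \ref{sec: eqnfilter}; the transfer of joint-law uniqueness then needs the additional (standard but not automatic) observation that the density is a measurable functional of the pair, so $\tilde{\mathbb P}$-laws are determined by $\tilde{\mathbb Q}$-laws. Note also that it is this solution-dependent change of measure, not the nonlinearity of the normalization per se, that obstructs transferring \emph{pathwise} uniqueness: in the uncorrelated case Szpirglas does obtain pathwise uniqueness for Kushner--Stratonovich (cf.\ Remark \ref{rmk: strong}).

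Your core step is also underdeveloped at exactly its hardest point. Because of the correlated term $h+B$ integrated against $\de\tilde Y$, the dual object $v_s$ is a backward \emph{stochastic} PDE, and producing solutions with $\mathrm{C^2}$-type bounds uniform in the mollification parameter --- which you need to control error terms such as $\scalprod{\tilde\zeta_s}{(A-A^{\varepsilon})v^{\varepsilon}_s}$ when $h$ is merely bounded measurable --- is precisely the Hilbert-space regularity machinery this framework is built to avoid; you give no argument for it. Moreover, even granting the dual, the cancellation as you state it yields only $\E{\scalprod{\tilde\zeta_T}{\psi}}=0$, which kills the mean measure of the signed difference but not the random measure itself; one needs duality against random ($\mathcal F^Y_T$-measurable) terminal data or an $L^2$ argument to conclude $\tilde\zeta_T=0$ almost surely. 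The robust route, and the one in the spirit of \cite{heunislucic} (extending Szpirglas and Kurtz--Ocone), is the one you relegate to an alternative: prove uniqueness in joint law first by identifying any weak solution with the (unnormalized) filter via the filtered martingale problem and a Kallianpur--Striebel representation. Pathwise uniqueness for Zakai then follows from \emph{linearity} alone, with no Yamada--Watanabe theorem for measure-valued equations: if $\rho^1,\rho^2$ solve with the same $Y$ and datum $\mu$, so does $\frac12(\rho^1+\rho^2)$, and equality of the three joint laws gives, for bounded $\psi$ (second moments exist by Remark \ref{rmk: bddmass}),
\begin{equation*}
\E{\scalprod{\rho^1_t}{\psi}^2}=\E{\scalprod{\rho^2_t}{\psi}^2}=\E{\Bigl(\tfrac12\bigl(\scalprod{\rho^1_t}{\psi}+\scalprod{\rho^2_t}{\psi}\bigr)\Bigr)^{2}},
\end{equation*}
whence $\E{\bigl(\scalprod{\rho^1_t}{\psi}-\scalprod{\rho^2_t}{\psi}\bigr)^2}=0$ by the parallelogram identity. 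You should promote that argument to the main line; as written, your first route has an unfilled hole at its central step, and its Yamada--Watanabe invocation (legitimate here only via Szpirglas's Théorème V.6) becomes unnecessary.
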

\begin{remark}\label{rmk: strong}
In the case without correlated noise, studied in \cite{szpirglas}, it is possible to prove pathwise uniqueness also for the weak solutions of the Kushner-Stratonovich equation. Moreover, in \cite{szpirglas} (Théorème V.6)  it is shown how the classical Yamada-Watanabe result also apply to this situation, namely pathwise uniqueness and existence of a weak solution implies existence of a strong solution. 
The technique is not affected by the addition of a correlated noise, so at least for the Zakai equation we also have existence of a strong solution. 
This allows us to fix a probability space $(\tilde{\Omega},\mathcal{\tilde{F}},\{\tilde{\mathcal{F}}_t\},\tilde{\mathbb{Q}})$, equipped with a $\{\tilde{\mathcal{F}}_t\}$-Brownian motion $\tilde{Y}$, and to solve the Zakai equation with respect to different initial conditions on the same probabilistic setup.

\end{remark}
\begin{remark}
	Since the uniqueness in law property holds, the Markov property follows for both the weak solution of the Kushner-Stratonovich and the Zakai equation.
\end{remark}
\begin{remark}\label{rmk: lesshp}
	The fact that $a:=\sigma\sigma^\top$ must be uniformly elliptic (Hypotheses \ref{hp: generali} - \textit{b.}) is necessary only for the proof of Proposition \ref{prop: lderzakai} (in particular, it is required to ensure the existence of a smooth solution to \eqref{eqn: adjeq}). For all the previous results, one can just assume a non-degeneracy condition on $a$, that is $a(x)$ positive definite for every $x\in\R^d$. 
	Moreover, the non-degeneracy of $\sigma$ is required only for Theorem \ref{thm: uniqhl}. Thus, if one assumes the conclusions of Theorem \ref{thm: uniqhl} and the Markov property for the solutions, as well as the existence of a smooth solution to \eqref{eqn: adjeq}, then all the following results still hold without Hypotheses \ref{hp: generali} - \textit{b.} 
\end{remark}
We conclude this section by stating how to obtain a weak solution to the Kushner-Stratonovich equation from a weak solution to the Zakai equation and viceversa. First, let us assume that Hypotheses \ref{hp: generali} hold and let $\{(\tilde{\Omega},\mathcal{\tilde{F}},\{\tilde{\mathcal{F}}_t\},\tilde{\mathbb{P}}),(\tilde{\Pi}_t,\tilde{I_t})\}$ be a weak solution to the Kushner-Stratonovich equation. Then, we can define two processes $\tilde{Y}=\{\tilde{Y}_t,t\in[0,T]\}$ and $\chi \defeq\{\chi_t,t\in[0,T]\}$ by
\begin{equation}
	\tilde{Y}_t = \tilde{I}_t + \int_0^t\tilde{\Pi}_s(h)\de s,\quad \chi_t = \exp\left\{-\int_0^t \tilde{\Pi}_s(h)\de \tilde{I}_s + \frac{1}{2}\int_0^t|\tilde{\Pi}_s(h)|^2\de s\right\},
\end{equation}
where it easy to see that $\chi$ in a $\tilde{\mathbb {P}}$-martingale. Thus, if we introduce the probability measure $\de \tilde{\mathbb Q} = \chi_T\de \tilde{\mathbb P}$ and $\tilde {\rho} =\{\tilde{\rho}_t =  \mu(\R^d)\chi_t^{-1}\tilde{\Pi}_t,t\in[0,T]\} $, we have that $\{(\tilde{\Omega},\mathcal{\tilde{F}},\{\tilde{\mathcal{F}}_t\},\tilde{\mathbb{Q}}),(\tilde{\rho_t},\tilde{Y_t})\}$ is a weak solution to the Zakai equation starting at $\mu\in\mathcal{M}_2^+(\R^d)$. We remark that the presence of $\mu(\R^d)$ in the definition of $\tilde{\rho}$ is necessary to keep track that the initial condition $\mu$ is not a probability measure (see also Remark \ref{rmk:  measinitcond}). On the other hand, if $\{(\tilde{\Omega},\mathcal{\tilde{F}},\{\tilde{\mathcal{F}}_t\},\tilde{\mathbb{Q}}),(\tilde{\rho_t},\tilde{Y_t})\}$ is a weak solution to the Zakai equation, we can set
\begin{equation}
	\tilde{I}_t = \tilde{Y}_t - \int_0^t\frac{\tilde{\rho}_s(h)}{\tilde{\rho}_s(\R^d)}\de s,\quad \xi_t = \exp\left\{\int_0^t \frac{\tilde{\rho}_s(h)}{\tilde{\rho}_s(\R^d)}\de \tilde{Y}_s - \frac{1}{2}\int_0^t\left|\frac{\tilde{\rho}_s(h)}{\tilde{\rho}_s(\R^d)}\right|^2\de s\right\},
\end{equation}
and, since $\xi$ is a martingale, introduce $\de \tilde{\mathbb{P}} = \xi_T\de\tilde{\mathbb Q}$ and $\tilde{\Pi} = \{\tilde{\Pi}_t = \tilde{\rho}_s / \tilde{\rho}_s(\R^d),t\in[0,T]\}$. Thus, the couple $\{(\tilde{\Omega},\mathcal{\tilde{F}},\{\tilde{\mathcal{F}}_t\},\tilde{\mathbb{P}}),(\tilde{\Pi}_t,\tilde{I_t})\}$ is a weak solution to the Kushner-Stratonovich equation.

The proofs of these results in the context of weak solutions to the nonlinear filtering equations can be found in \cite{szpirglas} for the case without correlated noise. However, the case with correlated noise has no extra difficulties. The technique used to prove these relations is based on Girsanov's theorem and on the classical It\^o formula, and it follows the way to link the filter $\Pi$ and the unnormalized filter $\rho$. A discussion of this change of probability method in the context of the filtering problem can be found for instance in \cite{baincrisan} or in \cite{xiong}.

\section{It\^o formula and regularity with respect to the initial condition for the Zakai Equation}\label{sec: zakai}
In this section we discuss some properties of the solution to the Zakai equation. Our first aim is to show a chain rule of It\^o type for the composition with a function in $\mathrm{C^2_L}(\mathcal{M}_2^+(\R^d))$. Then, we investigate the differentiability of a solution with respect to the initial condition. In this section, we will consider a complete filtered probability space $(\Omega, \mathcal{F},\{\mathcal{F}_t\},\mathbb{Q})$ endowed with a $\{\mathcal{F}_t\}$-Brownian motion $Y$, and a solution to the Zakai equation $\rho$ on that probabilistic setup (see also Remark \ref{rmk: strong}). In particular
\begin{equation}\label{eqn: zakai}
	\rho_t(\psi) = \mu(\psi) + \int_0^t \rho_s(A\psi)\de s + \int_0^t \rho_s(h \psi+ B\psi)\cdot\de Y_s,\quad \psi\in \mathrm{C^2_b}(\R^d),
\end{equation}
where the coefficients are related to the filtering problem \eqref{eqn: signal}-\eqref{eqn: obs} and the operators $A$ and $B$ are defined by \eqref{eqn: opdiff}. We will also assume that Hypotheses \ref{hp: generali} hold, so from Theorem \ref{thm: uniqhl} we have the pathwise uniqueness property. Finally, we will consider only initial conditions $\mu$ in $\mathcal{M}_2^+(\R^d)$, so $\rho$ is a  $\mathcal{M}_2^+(\R^d)$-valued process.
\subsection{It\^o formula for the Zakai Equation}\label{ssec: itoforzakai}
The purpose of this section is to identify an It\^o formula for the composition of a regular function and a process that solves \eqref{eqn: zakai}. This is a key step for our final purpose, that is write and study the backward Kolmogorov equation associated to the Zakai equation.\\\\ 
Before stating the main result of this section, we need to introduce a notation for the integral with respect to the product measure $\mu\otimes\mu$, with $\mu\in\Mp$. Let $f,g\in\mathrm{C_b}(\R^d)$, $h\in\mathrm{C_b}(\R^d\times\R^d)$. We adopt the following notations:
\begin{itemize}
	\item Every time $\mu(\de x)\otimes\mu(\de y)$ integrates a product of functions over $\R^d$, it is meant that the first one is integrated with respect to $\mu(\de x)$ and the second one with respect to $\mu(\de y)$, that is
	\begin{equation*}
		\mu\otimes\mu(fg):=\int\int f(x)g(y)\mu(\de x)\mu(\de y).
	\end{equation*} 
	In particular, $\mu\otimes \mu (f \ f) = \int\int f(x)f(y)\mu(\de x)\mu(\de y)$;
	\item  Every time $\mu(\de x)\otimes\mu(\de y)$ integrates a product of functions over $\R^d$ and a function over $\R^d\times\R^d$, it is meant that the first one is integrated with respect to $\mu(\de x)$, the second one with respect to $\mu(\de y)$ and the third with respect to both, that is
	\begin{equation*}
		\mu\otimes\mu(fgh):=\int\int f(x)g(y)h(x,y)\mu(\de x)\mu(\de y).
	\end{equation*} 
\end{itemize}
The extension to vector-valued and matrix-valued functions is straightforward.

\begin{proposition}\label{prop: itoz}
	Let $\rho=\{\rho_t,t\in[0,T]\}$ be a solution to the Zakai equation starting at $\mu\in\mathcal{M}^+_2(\R^d)$ and let $u$ be in $\mathrm {C_L^2}(\mathcal{M}_2^+(\R^d))$. Let us also assume that Hypotheses \ref{hp: generali} are satisfied. Then the following It\^o formula holds:
	\begin{equation}\label{eqn: itozakai}
		\begin{aligned}
			u(\rho_t) &= u(\mu) + \int_0^t\rho_s\left(\diff _\mu u(\rho_s)\cdot f\right)\de s\\
			 &\ +\int_0^t \frac{1}{2}\rho_s\left(\tr\left\{\diff _x\diff _\mu u(\rho_s)\sigma\sigma^\top\right\}\right)\de s+ \int_0^t\frac{1}{2}\rho_t\left(\tr\left\{\diff _x\diff _\mu u(\rho_s)\bar{\sigma}\bar{\sigma}^\top\right\}\right)\de s \\
			&\ +\int_0^t\rho_s\left(h\lf u(\rho_s)\right)\cdot\de Y_s  +\int_0^t\rho_s\left(\bar{\sigma}^\top \diff _\mu u(\rho_s)\right)\cdot\de Y_s  \\
			& \ + \int_0^t\frac{1}{2}\rho_s\otimes\rho_s\left(\lf^2u(\rho_s)h\cdot h\right)\de s + \int_0^t\rho_s\otimes\rho_s\left(h\cdot\bar{\sigma}^\top\lf \diff _\mu u(\rho_s)\right)\de s\\
			& \ + \int_0^t\frac{1}{2}\rho_s\otimes\rho_s\left(\tr\left\{\diff _\mu^2u(\rho_s)\bar{\sigma}\bar{\sigma}^\top\right\}\right)\de s,\quad t\in[0,T],
		\end{aligned}
	\end{equation}
almost surely in $\Omega$.
\end{proposition}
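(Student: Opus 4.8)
The plan is to first prove \eqref{eqn: itozakai} for cylindrical functions (Example \ref{ex: cyl}, Definition \ref{def: cyl}) by reducing to the classical finite-dimensional It\^o formula, and then to recover the general case $u\in\mathrm{C^2_L}(\mathcal{M}^+_2(\R^d))$ via the approximation machinery of Section \ref{sec: approx}. For the base case, fix $u(\mu)=g(\rho(\psi_1),\dots)$, more precisely $u(\mu)=g(\mu(\psi_1),\dots,\mu(\psi_n))$ with $g\in\mathrm{C^2_b}(\R^n)$ and $\psi_i\in\mathrm{C^2_b}(\R^d)$. By the weak formulation \eqref{eqn: zakai}, each real process $X^i_t\defeq\rho_t(\psi_i)$ is an It\^o process with $\de X^i_t=\rho_t(A\psi_i)\de t+\rho_t(h\psi_i+B\psi_i)\cdot\de Y_t$, so the joint covariation is $\de\langle X^i,X^j\rangle_t=\sum_{k=1}^d\rho_t(h_k\psi_i+B_k\psi_i)\,\rho_t(h_k\psi_j+B_k\psi_j)\,\de t$. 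I would apply the classical It\^o formula to $g(X^1_t,\dots,X^n_t)$ and rewrite each resulting term through the cylindrical derivative formulas of Example \ref{ex: cyl}.

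This term-matching is purely computational. The drift $\sum_i\partial_ig\,\rho_t(A\psi_i)$ reproduces the $f$-, $\sigma\sigma^\top$- and $\bar\sigma\bar\sigma^\top$-terms once one recognises, via Example \ref{ex: cyl}, that $\sum_i\partial_ig\,\diff_x\psi_i$ and $\sum_i\partial_ig\,\diff_x^2\psi_i$ coincide with $\diff_\mu u(\rho_t)$ and $\diff_x\diff_\mu u(\rho_t)$; the martingale part splits, using $B_k\psi=(\bar\sigma^\top\diff_x\psi)_k$, into $\rho_t(h\lf u(\rho_t))\cdot\de Y_t$ and $\rho_t(\bar\sigma^\top\diff_\mu u(\rho_t))\cdot\de Y_t$. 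Expanding the covariation into the four products $h\cdot h$, $h\cdot B$, $B\cdot h$, $B\cdot B$ and symmetrising in $(i,j)$ yields exactly the three $\rho_t\otimes\rho_t$-terms: the first gives $\lf^2u(\rho_t)\,h\cdot h$, the two cross terms combine into the mixed term (identified through Proposition \ref{prop: sim2der}, which links $\lf\diff_\mu u$ with the spatial derivative $\diff_x\lf^2u$), and the last gives $\tr\{\diff_\mu^2u(\rho_t)\bar\sigma\bar\sigma^\top\}$.

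For the general case I would localise and approximate. By Remark \ref{rmk: bddmass} the total mass $\rho_t(\R^d)$ is a.s. continuous, strictly positive and bounded on $[0,T]$, so the stopping times $\tau_k\defeq\inf\{t:\rho_t(\R^d)\notin[1/k,k]\}\wedge T$ confine $\rho_{\cdot\wedge\tau_k}$ to $\mathcal{M}^+_{2,k}(\R^d)$ and satisfy $\tau_k\to T$ a.s. Chaining Lemma \ref{lemma: approxsuppcomp}, Lemma \ref{lemma: approxcilgen} and Lemma \ref{lemma: approxcilgencil} produces cylindrical $u_m$ for which $u_m$ together with the six derivatives $\lf u_m$, $\lf^2u_m$, $\diff_\mu u_m$, $\diff_\mu^2 u_m$, $\diff_x\diff_\mu u_m$, $\diff_x\lf^2 u_m$ converge to the corresponding derivatives of $u$, with uniform bounds. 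The base case gives \eqref{eqn: itozakai} for each $u_m$ on $[0,t\wedge\tau_k]$; I would then pass to the limit, using dominated convergence for the finite-variation integrals (the coefficients $f,\sigma,\bar\sigma,h$ are bounded by Hypotheses \ref{hp: generali}, and the mass being bounded by $k$ gives factors $\le k$ for $\rho_s$ and $\le k^2$ for $\rho_s\otimes\rho_s$) and the It\^o isometry for the two stochastic integrals. Finally, letting $k\to+\infty$ and invoking $\tau_k\to T$ together with continuity in $t$ of all terms removes the localisation and yields \eqref{eqn: itozakai} on $[0,T]$.

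The main obstacle is exactly this passage to the limit. Since the formula involves both linear functional and $L$-derivatives up to second order, one must approximate $u$ simultaneously in \emph{all} of them, which is the entire purpose of the layered construction of Section \ref{sec: approx}. The delicate point is that the approximants of Lemmas \ref{lemma: approxcilgen}--\ref{lemma: approxcilgencil} are built on the compact-support sets $\mathcal{H}_N^k$, whereas $\rho_s$ has non-compact support; this is reconciled by the cut-off step of Lemma \ref{lemma: approxsuppcomp}, whose approximants $\hat u^N(\mu)=u(\rho^N\mu)$ depend on $\mu$ only through its restriction to a compact $K_{N+1}$ and whose derivatives carry a spatial variable confined to $K_{N+1}$, so that the integrals against $\rho_s$ and $\rho_s\otimes\rho_s$ only see a compact window and stay controlled uniformly in the (non-compact) support of the process. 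Keeping the domination uniform through the three nested limits, for every one of the six derivatives at once, is the crux of the argument.
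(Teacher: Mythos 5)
Your proposal is correct and follows essentially the same route as the paper's proof: the classical It\^o formula applied to cylindrical functions via the derivative identities of Example \ref{ex: cyl} (the paper's first step), the chained approximations of Lemmas \ref{lemma: approxcilgencil}, \ref{lemma: approxcilgen} and \ref{lemma: approxsuppcomp} with dominated convergence for the Lebesgue integrals and It\^o-isometry $L^2$ convergence for the stochastic ones (steps two through four), and removal of the mass constraint $\rho_t(\R^d)\in\left[\frac{1}{k},k\right]$ via the stopping times $\tau_k$ justified by Remark \ref{rmk: bddmass} (step five). The only cosmetic difference is that you localise before approximating, whereas the paper assumes the mass confinement throughout its first four steps and stops the process only at the end; the estimates are the same.
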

\begin{proof}
The proof in divided into five steps: the idea is to show the formula for cylindrical function over a compact subset of $\mathcal{M}_2^+(\R^d)$, which is a direct consequence of the classical It\^o formula, and then achieve  the result by approximation and localization. In particular, for the first four steps we assume that, for a fixed $k>1$, $\rho_t\in\mathcal{M}_{2,k}^+(\R^d)$ for every $t\in[0,T]$, that is $\rho_t(\R^d)\in[\frac{1}{k},k]$ for every $t\in[0,T]$, almost surely. 
In the last step we get rid of this condition by a localization argument.\\\\
\textit{First step.} We prove the formula for $u\in\mathcal{C}_2(\Mp)$. More precisely, $u(\mu) = g(\scalprod{\mu}{\psi_1},\dots,\scalprod{\mu}{\psi_n})$, $n\in\N$, $g\in \rm C_b^2(\R^n)$, $\{\psi_i\}_{i=1}^n\subset \rm C_b^2(\R^d)$. Without loss of generality, we discuss the case $n=1$, that is $u(\mu) = g(\scalprod{\mu}{\psi})$. The result with $n\geq1$ is obtained with the same procedure. By the classical It\^o formula and \eqref{eqn: zakai}, we get that
\begin{multline*}
	\de u(\rho_t) =\de g(\scalprod{\rho_t}{\psi})  =g'(\scalprod{\rho_t}{\psi})\rho_t(A\psi)\de t + g'(\scalprod{\rho_t}{\psi})\rho_t(h\psi + B\psi)\cdot\de Y_t \\+ \frac{1}{2}g''(\scalprod{\rho_t}{\psi})\rho_t(h\psi + B\psi)\cdot\rho_t(h\psi + B\psi))\de t\\
	=g'(\scalprod{\rho_t}{\psi})\rho_t(\diff _x\psi\cdot f)\de t +  \frac{1}{2}g'(\scalprod{\rho_t}{\psi})\rho_t\left(\tr\left\{\diff _x^2\psi\sigma\sigma^\top\right\}\right)\de t \\+  \frac{1}{2}g'(\scalprod{\rho_t}{\psi})\rho_t\left(\tr\left\{\diff _x^2\psi\bar{\sigma}\bar{\sigma}^\top\right\}\right)\de t+ g'(\scalprod{\rho_t}{\psi})\rho_t(h\psi +\bar{\sigma}^\top \diff _x\psi)\cdot\de Y_t \\+ \frac{1}{2}g''(\scalprod{\rho_t}{\psi})\rho_t(h\psi + \bar{\sigma}^\top \diff _x\psi)\cdot\rho_t(h\psi + \bar{\sigma}^\top \diff _x\psi))\de t.
\end{multline*}
Then, recalling Example \ref{ex: cyl}, we get that
\begin{multline}\label{eqn: itozdiff}
	\de u (\rho_t) = \rho_t(\diff _\mu u(\rho_t)\cdot f)\de t + \frac{1}{2} \rho_t\left(\tr\left\{\diff _x\diff _\mu u (\rho_t)\sigma\sigma^\top\right\}\right)\de t+ \frac{1}{2} \rho_t\left(\tr\left\{\diff _x\diff _\mu u (\rho_t)\bar{\sigma}\bar{\sigma}^\top\right\}\right)\de t\\
	+ \rho_t(h\lf u(\rho_t))\cdot\de Y_t + \rho_t(\bar{\sigma}^\top \diff _\mu u (\rho_t))\cdot\de Y_t
	+ \frac{1}{2}\rho_t\otimes\rho_t(\lf^2u(\rho_t)h\cdot h)\de t \\+ \rho_t\otimes\rho_t(h\cdot\bar{\sigma}^\top\lf \diff _\mu u(\rho_t)) \de t+ \frac{1}{2}\rho_t\otimes\rho_t\left(\tr\left\{\diff _\mu^2 u(\rho_t)\bar{\sigma}\bar{\sigma}^\top\right\}\right)\de t,\quad
\end{multline}
for every $t\in[0,T]$, almost surely in $\Omega$. \\\\
\textit{Second step.} Let us fix $N\geq 1$ and $k>1$. Now we show the formula for functions of the form $u(\mu) = \scalprod{\frac{\mu^r}{\mu(\R^d)^r}}{\varphi(\cdot,\dots,\cdots,\mu(\R^d))}$, with $\mu\in\mathcal{H}_N^k$, $r\in\N$, $\varphi\in \mathrm {C_b^2}(K_N^r\times\left[\frac{1}{k},k\right])$ and $\varphi$ symmetrical in the first $r$ arguments. Thanks to Lemma \ref{lemma: approxcilgencil}, there exists $\{u^n\}_{n\geq 1}\subset\mathcal{C}_2(\Mp)$ such that $\norm{u - u^n}_{\mathrm{C^2_L}(\mathcal{H}_N^k)}\to 0$ as $n\to+\infty$, where the norm has been introduced in \eqref{eqn: c2lnrom}. Thus, thanks to the first step, we get the formula \eqref{eqn: itozdiff} with $u^n$ in place of $u$.
 
We study now the convergence of the terms in the expression we obtained. Since $u^n$ converges to $u$ for every $\mu\in\mathcal{H}_N^k$, we have that $u^n(\rho_t)\to u(\rho_t)$ and $u^n(\mu)\to u (\mu)$ almost surely in $\Omega$, as $n\to+\infty$. 

For the integrals in time, we can proceed by dominated convergence thanks to the convergence in $\norm{\cdot}_{\rm C_L^2,\mathcal{H}_N^k}$ norm of  $\{u^n\}_{n\geq1}$ and the boundedness of the coefficients $b,\sigma,\bar{\sigma},h$. Here we discuss the convergence of $\int_0^t\frac{1}{2}\rho_s\otimes\rho_s\left(\tr\left\{\diff _\mu^2u^n(\rho_s)\bar{\sigma}\bar{\sigma}^\top\right\}\right)\de s$, but the other terms can be studied analogously. Thanks to Lemma \ref{lemma: approxcilgencil}, we have
\begin{equation}
	\left\vert\tr\left\{D^2_\mu u^n(\rho_s)\bar{\sigma}\bar{\sigma}^\top\right\} - \tr\left\{D^2_\mu u(\rho_s)\bar{\sigma}\bar{\sigma}^\top\right\}\right\vert\leq 3\norm{\sigma}_\infty^2\norm{\diff _\mu^2 u}_\infty,
\end{equation}
so by dominated convergence 
\begin{equation*}
	\left\vert \rho_s\otimes\rho_s\left(\tr\left\{D^2_\mu u^n(\rho_s)\bar{\sigma}\bar{\sigma}^\top\right\} - \tr\left\{D^2_\mu u(\rho_s)\bar{\sigma}\bar{\sigma}^\top\right\}\right)\right\vert\to0,
\end{equation*}
as $n\to+\infty$. Moreover, we have that for every $t\in[0,T]$
\begin{equation*}
	\left\vert \rho_s\otimes\rho_s\left(\tr\left\{D^2_\mu u^n(\rho_s)\bar{\sigma}\bar{\sigma}^\top\right\} - \tr\left\{D^2_\mu u(\rho_s)\bar{\sigma}\bar{\sigma}^\top\right\}\right)\right\vert\leq3k\norm{\sigma}_\infty^2\norm{\diff _\mu^2 u}_\infty\in L^\infty([0,t]),
\end{equation*}
since $\rho_t(\R^d)\in\left[\frac{1}{k},k\right]$. Thus, again by dominated convergence we can conclude that for any $t\in[0,T]$
\begin{equation*}
\left\vert\int_0^t\frac{1}{2}\tr\left\{D^2_\mu u^n(\rho_s)\bar{\sigma}\bar{\sigma}^\top\right\}\de s - \frac{1}{2}\int_0^t\tr\left\{D^2_\mu u(\rho_s)\bar{\sigma}\bar{\sigma}^\top\right\}\de s\right\vert\to 0,
\end{equation*}
almost surely in $\Omega$, as $n\to+\infty$.

For the stochastic integrals, we prove the convergence in $L^2(\Omega)$. Since the technique is the same for both the terms, let us focus on $\int_0^t\rho_s\left(h\lf u^n(\rho_s)\right)\cdot\de Y_s$. By It\^o isometry, we have that for any $t\in[0,T]$ it holds
\begin{multline*}
\E{\left(\int_0^t \left\{\rho_s\left(h\lf u^n(\rho_s)\right) -\rho_s\left(h \lf u(\rho_s)\right)\right\}\cdot\de Y_s\right)^2} \\= \E{\int_0^t \left\vert\left\{\rho_s\left(h\lf u^n(\rho_s)\right) -\rho_s\left(h \lf u(\rho_s)\right)\right\}\right\vert^2\de s} \to 0,
\end{multline*}
where the convergence is obtained thanks to the uniform convergence over $\mathcal{H}_N^k\times K_N$ of $\{\lf u^n\}_{n\geq1}$ and the boundedness of $h$, combined with the dominated convergence argument we used for the deterministic integral.\\\\
Every convergence we proved implies the convergence in probability, so the relation \eqref{eqn: itozakai} holds almost everywhere in $\Omega$, for every $t\in[0,T]$. Since both the right hand side and the left hand side of \eqref{eqn: itozakai} are continuous 
, the relation holds for every $t\in[0,T]$ almost everywhere in $\Omega$.
\\\\\textit{Third step.} Let $u$ be in $\mathrm {C_L^2}(\mathcal{H}_N^k)$. Then by Lemma \ref{lemma: approxcilgen}, there exists a sequence $\{\phi^n\}_{m\geq1}$ which converges pointwise to $u$, and the same holds for the derivatives needed in the It\^o formula. Moreover, as we pointed out in Remark \ref{rmk: cilgen}, $\phi^n(\mu) = \scalprod{\frac{\mu^{\times n}}{\mu(\R^d)^n}}{\varphi_n(\cdot,\dots,\cdot,\mu(\R^d))}$, with $\varphi_n\in \mathrm {C_b^2}(K_N^n\times\left[\frac{1}{k},k\right]),n\geq1$. Thus, by step two, \eqref{eqn: itozakai} holds for every $\phi^n$. To conclude, we can pass to the limit with the same argument we used in step two, exploiting the bounds on the norms given by Lemma \ref{lemma: approxcilgen}, the boundedness of $b,\sigma,\bar{\sigma},h$, the fact that $\rho_t(\R^d)\in\left[\frac{1}{k},k\right]$ for every $t\in[0,T]$ and the dominated convergence theorem.
\\\\\textit{Fourth step.} Let $u$ be in $\rm C_L^2(\mathcal{M}_{2,k}^+(\R^d))$. Thanks to Lemma \ref{lemma: approxsuppcomp}, we can conclude that \eqref{eqn: itozakai} holds also for this class of functions, by the same argument we use in the previous steps.
\\\\\textit{Fifth step.} Let us introduce the sequence of random times $\tau_k\colon\Omega\to [0,+\infty]$,
\begin{equation*}
	\tau_k = \left\{t\geq0: \rho_t(\R^d)\in \left[1/k,k\right]^\mathsf{c}\right\},\quad k >1.
\end{equation*}
First, $\{\tau_k\}_{k>1}$ are stopping times since they are exit time from a Borel set and moreover, thanks to Remark \ref{rmk: bddmass}, $\tau_k\to+\infty$, $\mathbb{P}$ almost surely as $k\to+\infty$. Then, we can consider the stopped process $\rho_t^k \defeq\rho_{t\wedge\tau_k}$ for which, thanks to te previous steps, \eqref{eqn: itozakai} holds. Indeed it still satisfies the Zakai equation \eqref{eqn: zakai} and $\rho_t^k(\R^d)\in[1/k,k]$, for every $t\in[0,T]$ and $k>1$. To conclude, we can let $k\to+\infty$ in the It\^o formula for $\rho_t^k$, recovering \eqref{eqn: itozakai} for $\rho_t$ and $u\in \mathrm {C^2_L}(\mathcal{M}_{2}^+(\R^d))$, thanks to the continuity in time of all the terms involved in the equation.

\end{proof}
\begin{remark}
	We can rewrite the formula \eqref{eqn: itozakai} in the following way
	\begin{equation*}
		\de u(\rho_t) = \rho_t\left( A\lf u (\rho_t)\right)\de t + \rho_t\left( (h + B)\lf u (\rho_t)\right)\cdot\de Y_t + \frac{1}{2}\rho_t\otimes\rho_t\left((h + B)\cdot(h + B)\lf^2 u (\rho_t)\right)\de t,
	\end{equation*}
	where
	\begin{multline*}
		\rho_t\otimes\rho_t\left((h + B)\cdot(h + B)\lf^2u(\rho_t)\right) \\= \int \! \! \! \int (h(x) + \bar{\sigma}^\top(x)\diff _x)\cdot(h(y) + \bar{\sigma}^\top(y)\diff _y)\lf^2u(\rho_t,x,y)\rho_t(\de x)\rho_t(\de y).
	\end{multline*}
\end{remark}
\begin{corollary}\label{cor: itoztime}
	Assume that Hypotheses \ref{hp: generali} hold, let $\rho=\{\rho_t,t\in[0,T]\}$ be a solution to the Zakai equation and let $u\colon[0,T]\times\M\to\R$ be in $\mathrm {C^2_L}(\mathcal{M}_s^+(\R^d))$ for the measure argument, in $\mathrm {C^1}([0,T])$ for the time argument and let $u$ and all its derivatives be bounded in all their arguments. Then it holds
	\begin{multline*}
		u(t,\rho_t) = u(0,\mu) \\+ \int_0^t\partial_su(s,\rho_s)\de s +  \int_0^t \rho_s\left( A\lf u (s,\rho_s)\right)\de s  + \int_0^t \rho_s\left( (h + B)\lf u (s,\rho_s)\right)\cdot\de Y_s\\
		+ \frac{1}{2}\int_0^t\rho_s\otimes\rho_s\left((h + B)\cdot(h + B)\lf^2 u (s,\rho_s)\right)\de s,\quad t\in[0,T],
	\end{multline*}
	almost surely.
\end{corollary}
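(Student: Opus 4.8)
The plan is to deduce the time-dependent formula from the autonomous one in Proposition \ref{prop: itoz} by handling the time variable separately. The cleanest base case is that of a product $u(t,\mu)=\theta(t)v(\mu)$ with $\theta\in\mathrm{C^1}([0,T])$ and $v\in\mathrm{C^2_L}(\mathcal{M}_2^+(\R^d))$. For such a $u$ one has $\partial_s u(s,\mu)=\theta'(s)v(\mu)$, $\lf u(s,\mu,x)=\theta(s)\lf v(\mu,x)$ and $\lf^2 u(s,\mu,x,y)=\theta(s)\lf^2 v(\mu,x,y)$; applying the product rule to $t\mapsto\theta(t)v(\rho_t)$ together with the It\^o formula \eqref{eqn: itozakai} for $v(\rho_t)$ produces exactly the claimed identity, since $\theta$ is a deterministic continuous finite-variation function and hence the covariation $\de[\theta,v(\rho_\cdot)]$ vanishes. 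By linearity the formula then holds for every finite sum $\sum_i\theta_i(t)v_i(\mu)$, and one route to the general statement would be a density argument over such combinations; I would prefer, however, a more direct reduction.

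To reach a general $u$ I would reduce straight to Proposition \ref{prop: itoz} by a time discretization. Fix $t\in[0,T]$ and a partition $0=t_0<\dots<t_M=t$, and write the telescoping sum $u(t,\rho_t)-u(0,\mu)=\sum_j\big[u(t_{j+1},\rho_{t_{j+1}})-u(t_j,\rho_{t_j})\big]$. Each increment splits as a pure time increment at frozen measure, $u(t_{j+1},\rho_{t_{j+1}})-u(t_j,\rho_{t_{j+1}})=\int_{t_j}^{t_{j+1}}\partial_s u(s,\rho_{t_{j+1}})\de s$ by the $\mathrm{C^1}$ regularity in time, plus a pure space increment at frozen time, $u(t_j,\rho_{t_{j+1}})-u(t_j,\rho_{t_j})$, to which I apply the It\^o formula of Proposition \ref{prop: itoz} for the function $u(t_j,\cdot)\in\mathrm{C^2_L}(\mathcal{M}_2^+(\R^d))$ on the interval $[t_j,t_{j+1}]$. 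Summing over $j$ expresses $u(t,\rho_t)-u(0,\mu)$ as a sum of Riemann-type time integrals and stochastic integrals whose integrands are evaluated at the frozen time $t_j$.

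It then remains to let the mesh tend to zero. For the $\de s$ integrals (including the $\partial_s u$ term) I would pass to the limit by dominated convergence, using the boundedness of all derivatives of $u$ and of the coefficients $f,\sigma,\bar\sigma,h$, together with the continuity of the integrands; the frozen evaluations $\rho_{t_{j+1}}$ and $t_j$ converge to $\rho_s$ and $s$ by the path-continuity of $\rho$ and the joint continuity of the derivatives. For the stochastic integrals I would argue in $L^2(\Omega)$ via the It\^o isometry, exactly as in the second step of the proof of Proposition \ref{prop: itoz}, controlling $\rho_s\big((h+B)\lf u(t_j,\rho_s)\big)-\rho_s\big((h+B)\lf u(s,\rho_s)\big)$ uniformly over each subinterval. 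The main obstacle is precisely this last point: one must invoke the joint continuity in the time argument of $\lf u$ and $\lf^2 u$, implicit in the regularity hypotheses, to guarantee that freezing time at $t_j$ produces an error that is uniformly small as the mesh shrinks, so that the frozen-time integrands converge to the correct ones. Since every term on the right-hand side is continuous in $t$, the resulting identity holds for all $t\in[0,T]$ simultaneously, almost surely.
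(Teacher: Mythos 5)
Your argument is correct, but it takes a genuinely different route from the paper. The paper's proof of Corollary \ref{cor: itoztime} consists of rerunning the whole approximation scheme of Proposition \ref{prop: itoz} with the time variable carried along: one starts from time-dependent cylindrical functions, applies the classical time-dependent It\^o formula (which produces the $\partial_s u$ term at the cylindrical level), and then repeats the approximation and localization steps, checking that the limits of Lemmas \ref{lemma: approxsuppcomp}--\ref{lemma: approxcilgencil} go through uniformly in the extra parameter $s$. You instead use Proposition \ref{prop: itoz} as a black box: the telescoping decomposition into a pure time increment at frozen measure (handled by the fundamental theorem of calculus) and a pure space increment at frozen time (handled by applying \eqref{eqn: itozakai} to the fixed function $u(t_j,\cdot)$ and subtracting its values at the two deterministic times $t_j,t_{j+1}$, which is legitimate since the formula holds as a process identity for each fixed function) reduces everything to a mesh limit. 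This buys modularity --- no repetition of the five-step machinery --- at the price of the extra limiting argument; note that for the $\de s$ integrals and the $L^2$ convergence of the stochastic integrals you do not actually need the freezing error to be uniformly small over subintervals, as you suggest at the end: pointwise convergence $\lf u(t_j(s),\rho_s,\cdot)\to\lf u(s,\rho_s,\cdot)$ plus the domination $C\rho_s(\R^d)^2$ (integrable uniformly in time by Remark \ref{rmk: bddmass}) suffices for dominated convergence under the It\^o isometry. Both routes tacitly require joint continuity of $\partial_s u$, $\lf u$, $\lf^2 u$ and the spatial derivatives in $(s,\mu)$, which the corollary's hypotheses state only loosely; you flag this honestly, and the paper's ``standard modification'' needs exactly the same reading. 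Your warm-up via products $\theta(t)v(\mu)$ is correct but superfluous once the discretization argument is in place.
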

\begin{proof}
	The proof is basically the same of Proposition \ref{prop: itoz}, with a standard modification in order to deal with the time dependence.
\end{proof}

\subsection{Differentiability properties}\label{ssec: diffmeas}
Given a complete filtered probability space $(\Omega, \mathcal{F},\{\mathcal{F}_t\},\mathbb{Q})$ endowed with a $\{\mathcal{F}_t\}$-Brownian motion $Y$, let $\rho^{s,\mu}$ be a solution to the Zakai equation. We use the superscript ${s,\mu}$ to highlight the initial value $\mu\in\mathcal{M}^+_2(\R^d)$ and the initial time $s\in[0,T]$. The aim of this subsection is to investigate its differentiability with respect to the initial condition.

By computing formally the linear functional derivative of the equation \eqref{eqn: zakai},  
for every $x\in\R^d$ we get the following equation, defined over $(\Omega, \mathcal{F},\{\mathcal{F}_t\},\mathbb{Q},Y)$, for an $\M$-valued process $Z^s(x) = \{Z_t^s(x),t\in[s,T]\}$:
\begin{equation}\label{eqn: zakaider}
	\begin{aligned}
	\scalprod{Z^s_t(x)}{\psi}=\scalprod{\delta_x}{\psi} + \int_s^t \scalprod{Z^s_\tau(x)}{A\psi} \de\tau 
	+\int_s^t \scalprod{Z^s_\tau(x)}{B\psi + h\psi}\cdot\de Y_\tau.
	\end{aligned}
\end{equation}
We can look for solutions to \eqref{eqn: zakaider} that are in $\mathcal{M}_2^+(\R^d)$ for every fixed $x\in\R^d$ and since it is a Zakai equation with initial condition in $\ptwo$ and with the same coefficients of \eqref{eqn: zakai}, we have that there exists a unique $\mathcal{M}_2^+(\R^d)$-valued solutions $Z_t(x)$ for every $x$.  
\begin{remark}
	The solution of \eqref{eqn: zakaider} will play the role of linear functional derivative of the mapping $\mathcal{M}_2^+(\R^d)\ni\mu\mapsto\rho^{s,\mu}_t$, $t\in[s,T]$, see also Remark \ref{rmk: abuseofnot}. We can notice that $Z^s(x)$ does not depend on $\mu$, as expected since the Zakai equation is linear.
\end{remark}

Before presenting the main results regarding the properties of the process $Z^s(x)$ introduced above, we provide an explicit estimate for the mass of a solution of the Zakai equation. We report the proof for completeness, even if the result is well known (see for instance Fact 3.2 in \cite{heunislucic}).
	\begin{lemma}\label{lemma: est1}
	Let $\rho^{s,\mu}$ be a  solution of the Zakai equation \eqref{eqn: zakai} and let Hypotheses \ref{hp: generali} be satisfied. Then it holds
	\begin{equation*}
		\E{\left\vert\scalprod{\rho^{s,\mu}_t}{\mathbf{1}}\right\vert^2}\leq2\scalprod{\mu}{\mathbf{1}}^2e^{2T\norm{h}^2_\infty},
	\end{equation*}
	where $\mathbf{1}$ is the function equal to $1$ for every $x\in\R^d$ and $\mu\in\mathcal{M}_2^+(\R^d)$. 
\end{lemma}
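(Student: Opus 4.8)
The plan is to derive a closed scalar equation for the total mass by testing the Zakai equation against the constant function $\mathbf{1}$, and then to close an estimate by It\^o isometry and Gr\"onwall's lemma. Since $\mathbf{1}\in\mathrm{C^2_b}(\R^d)$ is an admissible test function and its spatial derivatives vanish, the operators defined in \eqref{eqn: opdiff} give $A\mathbf{1}=0$ and $B_k\mathbf{1}=0$, so that $h\mathbf{1}+B\mathbf{1}=h$. Evaluating \eqref{eqn: zakai} at $\psi=\mathbf{1}$ therefore yields, for $t\in[s,T]$,
\begin{equation*}
	\scalprod{\rho^{s,\mu}_t}{\mathbf{1}} = \scalprod{\mu}{\mathbf{1}} + \int_s^t\scalprod{\rho^{s,\mu}_\tau}{h}\cdot\de Y_\tau,
\end{equation*}
a one-dimensional identity driven by the $d$-dimensional Brownian motion $Y$.

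Squaring and using $(a+b)^2\leq 2a^2+2b^2$, I would bound $\E{|\scalprod{\rho^{s,\mu}_t}{\mathbf{1}}|^2}$ by $2\scalprod{\mu}{\mathbf{1}}^2$ plus twice the second moment of the stochastic integral. By It\^o isometry the latter equals $\E{\int_s^t|\scalprod{\rho^{s,\mu}_\tau}{h}|^2\de\tau}$, and here the key pointwise estimate is that, for a \emph{positive} measure, Cauchy--Schwarz gives $\scalprod{\rho_\tau}{h_k}^2\leq\scalprod{\rho_\tau}{\mathbf{1}}\scalprod{\rho_\tau}{h_k^2}$; summing over $k$ and using boundedness of $h$ from Hypotheses \ref{hp: generali} one obtains $|\scalprod{\rho_\tau}{h}|^2\leq\norm{h}_\infty^2\scalprod{\rho_\tau}{\mathbf{1}}^2$. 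Setting $\phi(t)\defeq\E{\scalprod{\rho^{s,\mu}_t}{\mathbf{1}}^2}$ this leads to
\begin{equation*}
	\phi(t)\leq 2\scalprod{\mu}{\mathbf{1}}^2 + 2\norm{h}_\infty^2\int_s^t\phi(\tau)\de\tau,
\end{equation*}
and Gr\"onwall's inequality gives $\phi(t)\leq 2\scalprod{\mu}{\mathbf{1}}^2 e^{2\norm{h}_\infty^2(t-s)}\leq 2\scalprod{\mu}{\mathbf{1}}^2 e^{2T\norm{h}_\infty^2}$, which is the claim.

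The main obstacle is that the stochastic integral above is a priori only a \emph{local} martingale: the weak-solution definition only guarantees the almost-sure finiteness $\tilde{\mathbb Q}(\int_0^T\sum_i\tilde\rho_\tau(|h_i|)^2\de\tau<\infty)=1$, so neither It\^o isometry nor the finiteness of $\phi(t)$ may be invoked directly. To make the argument rigorous I would introduce the localizing stopping times $\tau_n\defeq\inf\{t\geq s:\scalprod{\rho^{s,\mu}_t}{\mathbf{1}}>n\}$ and run the above computation for the stopped process $\scalprod{\rho^{s,\mu}_{t\wedge\tau_n}}{\mathbf{1}}$; on $[s,\tau_n]$ the mass is bounded by $n$, so $\phi_n(t)\defeq\E{\scalprod{\rho^{s,\mu}_{t\wedge\tau_n}}{\mathbf{1}}^2}\leq n^2<\infty$ and the stopped stochastic integral is a genuine martingale, validating the isometry. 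The resulting bound $\phi_n(t)\leq 2\scalprod{\mu}{\mathbf{1}}^2 e^{2T\norm{h}_\infty^2}$ is uniform in $n$. Since $\rho^{s,\mu}$ has continuous trajectories, $t\mapsto\scalprod{\rho^{s,\mu}_t}{\mathbf{1}}$ is a.s. bounded on $[0,T]$, so $\tau_n\to+\infty$ almost surely (compare Remark \ref{rmk: bddmass}) and $\scalprod{\rho^{s,\mu}_{t\wedge\tau_n}}{\mathbf{1}}\to\scalprod{\rho^{s,\mu}_t}{\mathbf{1}}$ a.s.; Fatou's lemma then transfers the uniform bound to $\E{\scalprod{\rho^{s,\mu}_t}{\mathbf{1}}^2}$, completing the proof.
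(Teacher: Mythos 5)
Your proof is correct and follows essentially the same route as the paper's: testing the Zakai equation with $\psi=\mathbf{1}$ (so that $A\mathbf{1}=0$ and $B\mathbf{1}=0$), squaring, applying the It\^o isometry together with the bound $\left\vert\scalprod{\rho^{s,\mu}_\tau}{h}\right\vert^2\leq\norm{h}_\infty^2\scalprod{\rho^{s,\mu}_\tau}{\mathbf{1}}^2$, and concluding by Gronwall's lemma. The only difference is your explicit localization-and-Fatou argument to justify the isometry and the a priori finiteness of the second moment, a point the paper's proof passes over silently (it can alternatively be justified by the uniform moment bounds recalled in Remark \ref{rmk: bddmass}); this is added rigor, not a change of method.
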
  
\begin{proof}
	Since $\rho^\mu$ solves \eqref{eqn: zakai}, we can write the Zakai equation for $\psi \equiv \mathbf{1}$
	and then take the expected value. Thus, for every $t\in[0,T]$, we get
	\begin{equation*}
		\E{\left\vert\scalprod{\rho^{s,\mu}_t}{\mathbf{1}}\right\vert^2}\leq 2\scalprod{\mu}{\mathbf{1}} ^2+ 2\norm{h}_\infty^2\int_s^t\E{\left\vert\scalprod{\rho^{s,\mu}_\tau}{\mathbf{1}}\right\vert^2}\de \tau,
	\end{equation*}
	and the thesis follows by Gronwall's lemma. 
\end{proof}

\begin{proposition}\label{prop: lfzakai1}
	Let $s\in[0,T)$. Let $\rho^{s,\mu}$ be the solution of \eqref{eqn: zakai} starting at $\mu\in\mathcal{M}_2^+(\R^d)$ and let $Z^s_t(x)$, $x\in\R^d$, be the solution of \eqref{eqn: zakaider}. Then
	\begin{enumerate}
		\item[i.]  for every $m,m'\in \mathcal{M}_2^+(\R^d)$ it holds
		\begin{equation*}
			\scalprod{\rho_t^{s,m'}}{\psi} -\scalprod{\rho_t^{s,m}} {\psi} =\int_0^1\int_{\R^d}\scalprod{Z^s_t(x)}{\psi}\left[m'-m\right](\de x)\de\theta,\quad\forall t\in [s,T],
		\end{equation*} almost surely;
		\item[ii.] for every $t\in[s,T]$ and $x\in\R^d$, there exists a constant $C = C(T,h)>0$ such that
		\begin{equation*}
			\E{|\scalprod{Z^s_t(x)}{\mathbf{1}}|^2}\leq C(T,h).
		\end{equation*}
	\end{enumerate}
\end{proposition}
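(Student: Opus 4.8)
The plan is to establish the two assertions in turn, proving (ii) first since it furnishes exactly the integrability needed in (i). For (ii), the observation is that for each fixed $x\in\R^d$ the process $Z^s(x)$ is, by its defining equation \eqref{eqn: zakaider}, a solution of the Zakai equation on $[s,T]$ started at the initial datum $\delta_x\in\mathcal{M}_2^+(\R^d)$. Since $\scalprod{\delta_x}{\mathbf 1}=\delta_x(\R^d)=1$, Lemma \ref{lemma: est1} applied with $\mu=\delta_x$ yields directly
\begin{equation*}
\E{|\scalprod{Z^s_t(x)}{\mathbf 1}|^2}\le 2\,e^{2T\norm{h}_\infty^2}=:C(T,h),
\end{equation*}
a bound uniform in both $x$ and $t\in[s,T]$, which is the claim.

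For (i), the first point is that the integrand on the right-hand side is independent of $\theta$ (a reflection of the linearity of the Zakai equation, for which the derivative process $Z^s(x)$ does not depend on the base point $\mu$), so $\int_0^1\de\theta=1$ and it suffices to identify
\begin{equation*}
\scalprod{\nu_t}{\psi}:=\int_{\R^d}\scalprod{Z^s_t(x)}{\psi}\,[m'-m](\de x)
\end{equation*}
with $\scalprod{\rho^{s,m'}_t}{\psi}-\scalprod{\rho^{s,m}_t}{\psi}$. I would first show that $\nu=\{\nu_t\}$ solves the weak Zakai equation started at the signed measure $m'-m$: integrating \eqref{eqn: zakaider} against $[m'-m](\de x)$ and interchanging the order of integration gives
\begin{equation*}
\scalprod{\nu_t}{\psi}=\scalprod{m'-m}{\psi}+\int_s^t\scalprod{\nu_\tau}{A\psi}\de\tau+\int_s^t\scalprod{\nu_\tau}{B\psi+h\psi}\cdot\de Y_\tau.
\end{equation*}
The interchange in the drift term is ordinary Fubini, while in the stochastic term it is a stochastic Fubini theorem whose hypotheses hold because, using $|\scalprod{Z^s_\tau(x)}{B\psi+h\psi}|\le\norm{B\psi+h\psi}_\infty\scalprod{Z^s_\tau(x)}{\mathbf 1}$ together with the bound from (ii), one has $\int_{\R^d}\bigl(\E{\int_s^t|\scalprod{Z^s_\tau(x)}{B\psi+h\psi}|^2\de\tau}\bigr)^{1/2}|m'-m|(\de x)<\infty$, the total variation $|m'-m|(\R^d)$ being finite.

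To turn this into the desired identification I would invoke the pathwise uniqueness for the Zakai equation (Theorem \ref{thm: uniqhl}), which is stated for positive measure-valued solutions; the sign is handled through the Jordan decomposition $m'-m=(m'-m)^+-(m'-m)^-$, both parts lying in $\mathcal{M}_2^+(\R^d)$ since $(m'-m)^+\le m'$ and $(m'-m)^-\le m$. Writing $\nu^{\pm}_t:=\int_{\R^d}Z^s_t(x)\,(m'-m)^{\pm}(\de x)$, each $\nu^{\pm}$ is a \emph{positive} measure-valued solution started at $(m'-m)^{\pm}$, so $\nu^{\pm}=\rho^{s,(m'-m)^{\pm}}$ by uniqueness, whence $\nu=\nu^+-\nu^-$. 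The same linearity-plus-uniqueness reasoning gives additivity of the solution map, $\rho^{s,\mu_1+\mu_2}=\rho^{s,\mu_1}+\rho^{s,\mu_2}$; combined with the elementary identity $m'+(m'-m)^-=m+(m'-m)^+$ this produces
\begin{equation*}
\rho^{s,m'}_t-\rho^{s,m}_t=\rho^{s,(m'-m)^+}_t-\rho^{s,(m'-m)^-}_t=\nu_t,
\end{equation*}
which is the claim. Since for fixed $\psi$ both sides are continuous in $t$, the identity upgrades from holding for each $t$ almost surely to holding almost surely for all $t\in[s,T]$.

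I expect the stochastic Fubini interchange to be the main obstacle: setting it up rigorously requires first establishing the joint measurability of $(x,\tau,\omega)\mapsto\scalprod{Z^s_\tau(x)}{\psi}$, which I would obtain by solving \eqref{eqn: zakaider} for all $x$ on the common probabilistic setup (cf. Remark \ref{rmk: strong}) and exploiting pathwise continuity in time; once measurability and the uniform moment bound of part (ii) are in hand, the remaining estimates are routine.
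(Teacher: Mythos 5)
Your proof is correct, and its skeleton is the paper's: part (ii) is obtained exactly as in the paper, by viewing $Z^s(x)$ as a Zakai solution started at $\delta_x$ and applying Lemma \ref{lemma: est1} with $\scalprod{\delta_x}{\mathbf 1}=1$, and part (i) rests on the same two pillars, linearity of the Zakai equation plus pathwise uniqueness (Theorem \ref{thm: uniqhl}). Where you diverge is in how uniqueness is invoked. The paper defines the \emph{signed}-measure-valued process $\Delta^{m,m'}_t=\rho^{m'}_t-\rho^m_t-\tilde Z^{m,m'}_t$, observes it solves a Zakai equation with null initial condition, and simply asserts it vanishes; this tacitly applies a uniqueness statement to a signed-measure-valued solution, whereas Theorem \ref{thm: uniqhl} (following Heunis--Lucic) is formulated for $\mathcal{M}^+(\R^d)$-valued weak solutions. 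Your Jordan decomposition, with $(m'-m)^+\le m'$ and $(m'-m)^-\le m$ guaranteeing finite second moments, together with additivity of the solution map and the identity $m'+(m'-m)^-=m+(m'-m)^+$, reduces everything to uniqueness among \emph{positive} solutions, which is what the cited theorem actually gives. Likewise, your explicit stochastic Fubini verification (whose square-integrability hypothesis you correctly derive from the uniform bound in (ii)) is the substance behind the paper's unproved claim that ``it is easy to check that $\tilde Z^{m,m'}_t\in\mathcal{M}(\R^d)$'' and that $\Delta^{m,m'}$ solves the equation. In short, the paper's version is shorter; yours closes a genuine positivity gap and makes the interchange of integrals precise. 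The one step you should still record to make the uniqueness theorem formally applicable is that each $\nu^\pm$ is a continuous adapted $\mathcal{M}^+(\R^d)$-valued process in the sense of Definition \ref{def: weaksolz} (continuity can be tested against $\psi\in\mathrm{C^2_b}(\R^d)$, as the paper notes suffices for weak convergence, and requires the joint measurability of $(x,\tau,\omega)\mapsto\scalprod{Z^s_\tau(x)}{\psi}$ that you already flag); this is at the same level of rigor the paper itself leaves implicit.
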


\begin{proof}
	In this proof we hide the dependence on $s$ in $\rho_t^{s,\mu}$ and $Z^s_t(x)$. Let us define, for every $t\in[s,T]$ and every $m,m'\in\mathcal{M}^+_2(\R^d)$, the mapping 
	\begin{equation*}
		\tilde{Z}^{m,m'}_t\colon\mathrm{C^2_b}(\R^d)\ni\psi\mapsto\scalprod{\tilde{Z}^{m,m'}_t}{\psi}\defeq\int_{\R^d}\scalprod{Z_t(x)}{\psi}[m'-m](\de x).
	\end{equation*}
	It is easy to check that $\tilde{Z}^{m,m'}_t\in\M$ and then we can define the measure-valued process $\Delta^{m,m'} = \{\Delta^{m,m'}_t\defeq \rho_t^{m'}-\rho_t^{m}-\tilde{Z}^{m,m'}_t,t\in[s,T]\}$. Recalling that  $\rho^{m'}$ and $\rho^{m}$ solve \eqref{eqn: zakai} and $Z(x)$ solves \eqref{eqn: zakaider}, by linearity we obtain that, for every $m,m'\in\mathcal{M}_2^+(\R^d)$, $\Delta^{m,m'}$ solves a Zakai equation with null initial condition and same coefficients as \eqref{eqn: zakai}. Then, it holds that $\Delta^{m,m'}$ is the process equal to the null measure for every $t\in[s,T]$.
	Thus, since $Z_t(x)$ does not depend on $\mu$, we can say that for every $t\in[s,T]$
	\begin{equation*}
			\scalprod{\rho_t^{m'}}{\psi} -\scalprod{\rho_t^m} {\psi} =\int_0^1\int_{\R^d}\scalprod{Z_t(x)}{\psi}\left[m'-m\right](\de x)\de\theta,
	\end{equation*}almost surely. 
	Regarding \textit{ii}, it follows directly from Lemma \ref{lemma: est1}
\end{proof}

We also need to study the differentiability of the mapping $\R^d\ni x\mapsto\E{\scalprod{Z^{s}_t(x)}{\psi}}\in\R$, for every fixed $\psi\in \mathrm {C_b^2}(\R^d)$ and $t\in[s,T]$. 
\begin{proposition}\label{prop: lderzakai}
	Let $Z^{s}(x)$ be the solution of the equation \eqref{eqn: zakaider} and let Hypotheses \ref{hp: generali} hold. Then, for every $\psi\in \mathrm {C_b^2}(\R^d)$ and $t\in[s,T]$, the mapping $\R^d\ni x\mapsto\E{\scalprod{Z^{s}_t(x)}{\psi}}\in\R$  is twice continuously differentiable, with bounded derivatives.
\end{proposition}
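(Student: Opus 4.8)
The plan is to eliminate the randomness by taking $\mathbb{Q}$-expectations in \eqref{eqn: zakaider}, to recognise the resulting deterministic quantity as the transition semigroup of the signal, and then to read off the spatial regularity from parabolic smoothing. First I would fix $\psi\in\mathrm{C^2_b}(\R^d)$ and $t\in[s,T]$ and take expectations in \eqref{eqn: zakaider}. The stochastic integral is a true martingale: its integrand is dominated by $\norm{B\psi+h\psi}_\infty\scalprod{Z^s_\tau(x)}{\mathbf{1}}$ (since $\bar\sigma,h,\psi,\diff_x\psi$ are bounded), and $\E{\scalprod{Z^s_\tau(x)}{\mathbf{1}}^2}$ is bounded by Proposition \ref{prop: lfzakai1}(ii) (equivalently Lemma \ref{lemma: est1}); hence the integrand lies in $L^2([s,t]\times\Omega)$ and the integral has zero mean. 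The same domination justifies exchanging expectation and time integral by Fubini. Setting $\scalprod{\nu^x_\tau}{\varphi}\defeq\E{\scalprod{Z^s_\tau(x)}{\varphi}}$, which defines a finite positive measure, I obtain
\begin{equation*}
	\scalprod{\nu^x_t}{\psi} = \psi(x) + \int_s^t \scalprod{\nu^x_\tau}{A\psi}\,\de\tau,\qquad\psi\in\mathrm{C^2_b}(\R^d).
\end{equation*}

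This is precisely the weak (martingale-problem) formulation of the forward Kolmogorov equation for $A$ with initial datum $\delta_x$ at time $s$. Since $A$ is the generator of the signal process \eqref{eqn: signal}, which is strongly well posed under Hypotheses \ref{hp: generali}(a), the martingale problem for $A$ is well posed, and its unique solution is the law of the signal $\bar{X}^x$ started at $x$. Therefore
\begin{equation*}
	u(t,x)\defeq\E{\scalprod{Z^s_t(x)}{\psi}} = \E{\psi(\bar{X}^x_{t-s})} = \int_{\R^d}\Gamma(t-s,x,y)\,\psi(y)\,\de y,
\end{equation*}
where $\Gamma$ denotes the fundamental solution of $\partial_\tau-A$; equivalently $u(t,\cdot)=T_{t-s}\psi$ for the transition semigroup $(T_r)_{r\ge0}$ of the signal.

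It remains to show that $x\mapsto u(t,x)$ belongs to $\mathrm{C^2}$ with bounded derivatives. For $t=s$ this is immediate, since $u(s,\cdot)=\psi\in\mathrm{C^2_b}(\R^d)$. For $t>s$ I would use the smoothing provided by uniform ellipticity: under Hypotheses \ref{hp: generali} the coefficients are bounded and Lipschitz (hence H\"older), and $\sigma\sigma^\top$ is uniformly elliptic, so that $\sigma\sigma^\top+\bar\sigma\bar\sigma^\top$ is uniformly elliptic as well and $\partial_\tau-A$ is uniformly parabolic. Classical parabolic theory then yields a fundamental solution $\Gamma(r,x,y)$ that, for $r>0$, is twice continuously differentiable in $x$, with Gaussian-type bounds on $\Gamma$, $\diff_x\Gamma$ and $\diff_x^2\Gamma$. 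Differentiating $u(t,x)=\int_{\R^d}\Gamma(t-s,x,y)\psi(y)\,\de y$ under the integral sign, and noting that these Gaussian bounds integrate in $y$ to a finite constant independent of $x$, we conclude that $u(t,\cdot)\in\mathrm{C^2}$ with $\norm{\diff_x u(t,\cdot)}_\infty+\norm{\diff_x^2 u(t,\cdot)}_\infty<\infty$. This is the step, anticipated in Remark \ref{rmk: lesshp}, for which the non-degeneracy of $\sigma$ is essential.

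The martingale and Fubini justifications are routine once the mass estimate of Proposition \ref{prop: lfzakai1}(ii) is available; the main obstacle is the regularity step. Because the coefficients are merely Lipschitz, the stochastic flow $x\mapsto\bar{X}^x$ cannot be differentiated twice, so a purely probabilistic computation would not produce the second spatial derivative; the $\mathrm{C^2}$ bound must instead be extracted from the elliptic smoothing of $\Gamma$, which is exactly where uniform ellipticity enters. One should also keep in mind that these bounds degenerate as $t\downarrow s$, but since the statement requires only boundedness in $x$ for each fixed $t$, combining the case $t>s$ with the trivial case $t=s$ suffices.
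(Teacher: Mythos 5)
Your proposal is correct, and its first half coincides with the paper's: you take expectations in \eqref{eqn: zakaider}, justify the vanishing of the stochastic integral via the mass bound of Lemma \ref{lemma: est1} (the paper does this silently, so your martingale/Fubini care is a plus), and arrive at the Fokker--Planck equation \eqref{eqn: fokkerplanck} for the intensity measure. The two arguments then diverge. The paper never identifies the intensity with anything: it upgrades \eqref{eqn: fokkerplanck} to time-dependent test functions \eqref{eqn: fokkerplanckwithtime}, solves the \emph{backward} terminal-value problem \eqref{eqn: adjeq} classically (Friedman, using Hypotheses \ref{hp: generali}-\textit{b.}), and tests the Fokker--Planck identity against that solution to get $\E{\scalprod{Z^s_t(x)}{\psi}} = v(x,s)$ by pure duality, so the regularity of $v$ finishes the proof with no uniqueness statement needed. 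You instead identify the intensity flow as the law of the signal via well-posedness of the martingale problem, and then differentiate the fundamental-solution representation $u(t,x)=\int\Gamma(t-s,x,y)\psi(y)\,\de y$ using Gaussian bounds on $\diff_x\Gamma$ and $\diff_x^2\Gamma$. Both routes consume the same classical parabolic input (bounded Lipschitz, hence H\"older, coefficients and uniform ellipticity of $\sigma\sigma^\top+\bar\sigma\bar\sigma^\top$), and your handling of the degeneration of the bounds as $t\downarrow s$, with the trivial case $t=s$ treated separately, is sound since the statement fixes $t$. The one step you should firm up is the identification: passing from uniqueness of the martingale problem to uniqueness of \emph{measure-valued} solutions of the Fokker--Planck equation is not automatic and requires a superposition principle or an FPE-uniqueness theorem (e.g.\ of the type in \cite{bogachevkrylovrocknershaposhnikov}, the reference the paper itself invokes); under bounded coefficients this is standard once cited, and you should also record that $\nu^x_\tau$ is a probability flow (take $\psi=\mathbf{1}$, $A\mathbf{1}=0$) so that such results apply. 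It is worth noticing that the paper's duality trick sidesteps exactly this point, which is what makes it the leaner argument, while your representation via $\Gamma$ has the advantage of making the blow-up rate of the derivative bounds near $t=s$ explicit.
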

Before proving Proposition \ref{prop: lderzakai}, let us introduce some auxiliary tools. Let us denote by $I^s(x)$ the intensity measure associated to $Z^s(x)$, that is the measure such that $\E{\scalprod{Z^s_t(x)}{\psi}} = \scalprod{I^s_t(x)}{\psi}$. From \eqref{eqn: zakaider} we have that
\begin{equation}\label{eqn: meanzakai}
	\E{\scalprod{Z^s_t(x)}{\psi}} = \scalprod{\delta_x}{\psi} + \int_s^t \E{\scalprod{Z^s_\tau(x)}{A\psi}}\de \tau,\quad t\in[s,T],\quad \psi\in\mathrm{C^2_b(\R^d)},
\end{equation}
and so $I^s(x)$ solves the following Fokker-Planck equation:
\begin{equation}\label{eqn: fokkerplanck}
	{\scalprod{I^s_t(x)}{\psi}} = \scalprod{\delta_x}{\psi} + \int_s^t \scalprod{I^s_\tau(x)}{A\psi}\de \tau,\quad t\in[s,T],\quad \psi\in\mathrm{C^2_b(\R^d)}.
\end{equation}
Following the argument used for instance to prove Proposition 6.1.2 in \cite{bogachevkrylovrocknershaposhnikov} or Lemma 4.8 in  \cite{baincrisan}, from \eqref{eqn: fokkerplanck} one can deduce that for every $\varphi\in\mathrm{C^{1,2}_b}(\R^d\times[s,t])$ it holds that
\begin{equation}\label{eqn: fokkerplanckwithtime}
	{\scalprod{I^s_t(x)}{\varphi_t}} = \scalprod{\delta_x}{\varphi_s} + \int_s^t \scalprod{I^s_\tau(x)}{(\partial_\tau + A)\varphi_\tau}\de \tau.
\end{equation}
Starting from \eqref{eqn: fokkerplanckwithtime}, we can prove Proposition \ref{prop: lderzakai}. The argument exploits the regularity of the solution of a suitable auxiliary backward partial differential equation.
\begin{proof}[Proof of Proposition \ref{prop: lderzakai}]
Let us fix $t\in [s,T]$, $\psi\in\mathrm{C^2_b}(\R^d)$ and let us introduce the backward equation
\begin{equation}\label{eqn: adjeq}
\begin{cases}
	\partial_\tau v(y,\tau) + A v (y,\tau) = 0, &\quad (y,\tau)\in \R^d\times[s,t],\\
	v(y,t) = \psi(y), &\quad y\in\R^d.
\end{cases}
\end{equation}
Thanks to Hypotheses \ref{hp: generali} (see for instance Theorem 4.6 in \cite{friedman1} and more precisely the discussion after Theorem 5.1, page 147), we have that there exists a unique classical solution $v\in\mathrm{C^{2,1}_b}(\R^d\times[s,t])$ to \eqref{eqn: adjeq}. Then, if we choose $v$ as a test function in \eqref{eqn: fokkerplanckwithtime}, we obtain
\begin{equation}
	{\scalprod{I^s_t(x)}{v_t}} = \scalprod{\delta_x}{v_s} + \int_s^t \scalprod{I^s_\tau(x)}{(\partial_\tau + A)v_\tau}\de \tau = v(x,s),
\end{equation}
where the last equality follows from the fact that $v$ solves \eqref{eqn: adjeq}. Thus, recalling that $v(x,t) = \psi(x)$ and  the definition of $I^s(x)$, we obtain that
\begin{equation*}
	\E{\scalprod{Z^s_t(x)}{\psi}} = \scalprod{I^s_t(x)}{\psi} = v(x,s),\quad x\in\R^d,
\end{equation*}
and so the mapping $\R^d\ni x\mapsto\E{\scalprod{Z^{s}_t(x)}{\psi}}$ is in $\mathrm{C^2_b}(\R^d)$ thanks to the regularity of $v$.
\end{proof}

\begin{remark}\label{rmk: abuseofnot}
In Proposition \ref{prop: lfzakai1}, we showed that for $t\in[s,T]$ and $\psi\in\mathrm{C^2_b}(\R^d)$ fixed, the mapping $\mu\mapsto\scalprod{\rho^{s,\mu}_t}{\psi}\in L^1(\Omega)$ is in $\mathrm{C}^1(\mathcal{M}_2^+(\R^d);L^1(\Omega))$, with derivative given by $\scalprod{Z^s_t(x)}{\psi}$ and independent of $\mu$. Note that this does not imply that, almost surely, $Z^s_t$ is the linear functional derivative of $\mu\mapsto\rho_t^{s,\mu}$, since the continuity of $x\mapsto \scalprod{Z_t^s(x)}{\psi}$ holds only under expectation. 

With the same procedure used for Proposition \ref{prop: lfzakai1} we can find a process, that we denote with $U^s(x,y)$, $x,y\in\R^d$, which is symmetrical with respect to $x$ and $y$ and which satisfies properties analogue to \textit{i, ii,} in Proposition \ref{prop: lfzakai1} where $\rho^{s,\mu}$ is substituted with $Z^s (x)$ and $Z^s (x)$ with $U^s(x,y)$. Moreover, it turns out that $U^s(x,y)$ coincides with the null measure for every $t\in[s,T]$ and $x,y\in\R^d$.
\end{remark}
To conclude, we summarize in a proposition all the properties we showed in this section and which will be useful in the following discussions.
\begin{proposition}\label{prop: summarize}
	Let $s\in[0,T)$, let $(\Omega, \mathcal{F},\{\mathcal{F}_t\},\mathbb{Q}, \{Y_t\})$ be fixed and let Hypotheses \ref{hp: generali} holds. If $\rho^{s,\mu}$ is the solution of the Zakai equation \eqref{eqn: zakai}, then, for every $\psi\in\mathrm{C^2_b}(\R^d)$ and $t\in[s,T]$, the mapping  $\mu\mapsto\scalprod{\rho_t^{s,\mu}}{\psi}$ is in $\mathrm{C^2}(\mathcal{M}_2^+(\R^d);L^1(\Omega))$. Moreover, the mapping $\mu\mapsto\E{\scalprod{\rho_t^{s,\mu}}{\psi}}$ is in $\mathrm{C^2_L}(\mathcal{M}_2^+(\R^d))$.
\end{proposition}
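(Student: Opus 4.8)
The statement is a synthesis of Propositions \ref{prop: lfzakai1} and \ref{prop: lderzakai} together with Remark \ref{rmk: abuseofnot}, so the plan is to check that these ingredients supply exactly the objects and the regularity demanded by Definitions \ref{def: lfder}, \ref{def: ldiff} and \ref{def: c2lderivatives}, in the $L^1(\Omega)$-valued form for the first claim and in the scalar form for the second. Throughout I fix $\psi\in\mathrm{C^2_b}(\R^d)$ and $t\in[s,T]$ and write $u_\psi(\mu)\defeq\scalprod{\rho^{s,\mu}_t}{\psi}$ and $\bar u_\psi(\mu)\defeq\E{\scalprod{\rho^{s,\mu}_t}{\psi}}$.

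For the first claim I would read Proposition \ref{prop: lfzakai1}(i) as the Banach-valued defining identity \eqref{eqn: fder} for $u_\psi$, with first linear functional derivative $\lf u_\psi(\mu,x)=\scalprod{Z^s_t(x)}{\psi}$, independent of $\mu$. Its uniform boundedness in $L^1(\Omega)$ follows from Proposition \ref{prop: lfzakai1}(ii) and positivity of $Z^s_t(x)$, since $\E{|\scalprod{Z^s_t(x)}{\psi}|}\le\norm{\psi}_\infty\E{\scalprod{Z^s_t(x)}{\mathbf 1}}\le\norm{\psi}_\infty\sqrt{C(T,h)}$; its continuity in $x$ as an $L^1(\Omega)$-valued map is the content of Remark \ref{rmk: abuseofnot}, giving $u_\psi\in\mathrm{C^1}(\mathcal{M}_2^+(\R^d);L^1(\Omega))$. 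Since $\lf u_\psi$ does not depend on $\mu$, its own linear functional derivative vanishes; equivalently, the second-order process $U^s(x,y)$ of Remark \ref{rmk: abuseofnot} is the null measure, so that $\lf^2 u_\psi(\mu,x,y)=\scalprod{U^s_t(x,y)}{\psi}=0$, which is trivially bounded and continuous. Hence $u_\psi\in\mathrm{C^2}(\mathcal{M}_2^+(\R^d);L^1(\Omega))$.

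For the second claim I would apply $\E{\cdot}$ to the identity of Proposition \ref{prop: lfzakai1}(i); the uniform bound above licenses Fubini's theorem and yields $\lf\bar u_\psi(\mu,x)=\E{\scalprod{Z^s_t(x)}{\psi}}$, again independent of $\mu$ and bounded. By Proposition \ref{prop: lderzakai} this function is of class $\mathrm{C^2_b}(\R^d)$ in $x$; in particular it is continuous, so $\bar u_\psi\in\mathrm{C^1}(\mathcal{M}_2^+(\R^d))$, and its spatial derivatives furnish the first-order $L$-derivative $\diff_\mu\bar u_\psi(\mu,x)=\diff_x\E{\scalprod{Z^s_t(x)}{\psi}}$, continuous and bounded, together with the spatial differentiability of $\lf\bar u_\psi$ demanded in Definition \ref{def: c2lderivatives}. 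Taking expectations in the second-order identity gives $\lf^2\bar u_\psi(\mu,x,y)=\E{\scalprod{U^s_t(x,y)}{\psi}}=0$, which is smooth in $(x,y)$ with identically vanishing, hence bounded, derivatives. All three conditions of Definition \ref{def: c2lderivatives} being verified, $\bar u_\psi\in\mathrm{C^2_L}(\mathcal{M}_2^+(\R^d))$.

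Since every analytic input is already in place, the points needing care in the write-up are minor: the interchange of expectation and the linear-functional-derivative identity (controlled by Fubini and the uniform $L^1(\Omega)$ bound of Proposition \ref{prop: lfzakai1}(ii)), the matching of the continuity and boundedness clauses of the abstract definitions — where it is the derivatives, rather than $u_\psi$ and $\bar u_\psi$ themselves (whose size grows with the total mass), that are uniformly bounded — and the vanishing of the second-order process. The one genuinely nontrivial ingredient, namely that $x\mapsto\E{\scalprod{Z^s_t(x)}{\psi}}=v(x,s)$ is twice continuously differentiable with bounded derivatives, is established in Proposition \ref{prop: lderzakai} from the regularity of the backward equation \eqref{eqn: adjeq} under the uniform ellipticity in Hypotheses \ref{hp: generali}, and is used here as a \emph{black box}.
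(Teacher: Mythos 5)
Your proposal is correct and matches the paper's intent exactly: the paper states Proposition \ref{prop: summarize} without a separate proof, presenting it as a summary of Proposition \ref{prop: lfzakai1}, Proposition \ref{prop: lderzakai} and Remark \ref{rmk: abuseofnot}, which is precisely the synthesis you carry out (linear functional derivative $\scalprod{Z^s_t(x)}{\psi}$ independent of $\mu$, vanishing second-order process $U^s(x,y)$, Fubini to pass under the expectation, and Proposition \ref{prop: lderzakai} as the black box giving $\mathrm{C^2_b}$ regularity in $x$). Your remark that it is the derivatives, not $u_\psi$ and $\bar u_\psi$ themselves, that are uniformly bounded correctly identifies a reading of Definition \ref{def: lfder} that the paper itself leaves implicit.
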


\section{The backward Kolmogorov equation associated to the Zakai equation}\label{sec: zakaikolmogorov}
In this section we write and study the backward Kolmogorov equation associated to the Zakai equation, that is a parabolic partial differential equation on a space of positive measures. Let us denote with $\mathcal{L}$ the infinitesimal generator of the Zakai process, namely the operator $\mathcal{L}\colon\mathrm{C^2_L}(\mathcal{M}_2^+(\R^d))\to \mathrm{C_b}(\mathcal{M}_2^+(\R^d))$ defined by
\begin{equation*}
		\mathcal{L} u(\mu) = \mu\left(A\lf u(\mu)\right) + \frac{1}{2}\mu\otimes\mu\left((h + B)^\top(h + B)\lf^2 u(\mu)\right), \\ 
\end{equation*}
where $A$ and $B$ are defined by \eqref{eqn: opdiff} and $h\colon\R^d\to\R^d$ is Borel measurable and bounded.
The backward Kolmogorov equation we want to study is:
\begin{equation}\label{eqn: kolmogorov}
	\begin{cases}
		\partial_su(\mu,s) + \mathcal{L}u(\mu,s) = 0\quad&(\mu,s)\in\mathcal{M}_2^+(\R^d)\times[0,T],\\
		u(\mu,T) = \Phi(\mu)\quad&\mu\in\mathcal{M}_2^+(\R^d),
	\end{cases}
\end{equation}
where $\Phi$ is in $\mathrm {C^2_L}(\mathcal{M}_2^+(\R^d))$. Our aim is to study existence and uniqueness of classical solutions, in the sense given by the following definition:
\begin{definition}
	We say that $u\colon\mathcal{M}_2^+(\R^d)\times[0,T]\to\R$ is a classical solution to the backward Kolmogorov equation associated to the Zakai equation if it is of class $\mathrm {C^2_L}(\mathcal{M}_2^+(\R^d))$ in the measure argument and $\mathrm {C}^1([0,T])$ in the time argument (where in $t=0$ and $t=T$ the derivatives are understood in unilateral sense), if it and all its derivatives are bounded in all their arguments and if it satisfies the backward equation \eqref{eqn: kolmogorov}. 
\end{definition}

\subsection{Existence and uniqueness of a classical solution}\label{sec: exuniqzakai}
In order to show existence and uniqueness, we follow the classical approach to these kind of problems. First, we assume that a solution exists and we prove a representation formula which guarantees the uniqueness. In the following, when we refer to a solution $\rho^{s,\mu}$ to the Zakai equation, it is understood as the solution defined over a fixed complete filtered probability space $(\Omega, \mathcal{F},\{\mathcal{F}_t\},\mathbb{Q})$ endowed with a $\{\mathcal{F}_t\}$-Brownian motion $Y$ (see Remark \ref{rmk: strong}), which solves the equation starting from $\mu\in\mathcal{M}_2^+(\R^d)$ at $s\in[0,T)$.
\begin{proposition}
	Let $u=u(\mu,s)$ be a classical solution to \eqref{eqn: kolmogorov} and let $\rho^{s,\mu}$ be a solution to the Zakai equation \eqref{eqn: zakai}. Then the following representation formula holds
	\begin{equation}\label{eqn: reprformula}
		u(\mu,s) = \EQ{\Phi\left(\rho_T^{s,\mu}\right)},\quad (\mu,s)\in\mathcal{M}_2^+(\R^d)\times[0,T],
	\end{equation}
	and so the solution $u$ is uniquely characterized.
\end{proposition}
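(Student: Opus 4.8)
The plan is to apply the time-dependent It\^o formula of Corollary \ref{cor: itoztime} to the process $\tau\mapsto u(\rho_\tau^{s,\mu},\tau)$ on the interval $[s,T]$, to cancel the finite-variation part using the equation solved by $u$, and then to take $\mathbb{Q}$-expectation of the remaining stochastic integral, which will turn out to be a martingale. Since $u$ is a classical solution, it is of class $\mathrm{C^2_L}(\mathcal{M}_2^+(\R^d))$ in the measure variable, of class $\mathrm{C}^1([0,T])$ in time, and it together with all its derivatives is bounded; hence it satisfies exactly the hypotheses of Corollary \ref{cor: itoztime}. Applying that formula between $s$ and $T$ gives
\begin{align*}
	u(\rho_T^{s,\mu},T) &= u(\mu,s) + \int_s^T\Big[\partial_\tau u(\rho_\tau^{s,\mu},\tau) + \rho_\tau^{s,\mu}\big(A\lf u(\rho_\tau^{s,\mu},\tau)\big) \\
	&\qquad + \tfrac{1}{2}\rho_\tau^{s,\mu}\otimes\rho_\tau^{s,\mu}\big((h+B)\cdot(h+B)\lf^2 u(\rho_\tau^{s,\mu},\tau)\big)\Big]\de\tau \\
	&\qquad + \int_s^T \rho_\tau^{s,\mu}\big((h+B)\lf u(\rho_\tau^{s,\mu},\tau)\big)\cdot\de Y_\tau.
\end{align*}

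The integrand of the $\de\tau$ integral is precisely $\partial_\tau u + \mathcal{L}u$ evaluated along $(\rho_\tau^{s,\mu},\tau)$, which vanishes identically because $u$ solves \eqref{eqn: kolmogorov}. Using the terminal condition $u(\cdot,T)=\Phi$, what remains is
\begin{equation*}
	\Phi(\rho_T^{s,\mu}) = u(\mu,s) + \int_s^T \rho_\tau^{s,\mu}\big((h+B)\lf u(\rho_\tau^{s,\mu},\tau)\big)\cdot\de Y_\tau.
\end{equation*}
Taking expectation under $\mathbb{Q}$, the representation formula \eqref{eqn: reprformula} follows provided the stochastic integral is a genuine martingale, i.e.\ has zero expectation. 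This is the only delicate point; the rest is a direct consequence of the already-established It\^o formula.

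To verify the martingale property I would argue as follows. Since $u$ is a classical solution, both $\lf u$ and $\diff _\mu u$ are bounded, and by Hypotheses \ref{hp: generali} the coefficients $h$ and $\bar\sigma$ are bounded; recalling that $B_k\lf u = \sum_{i} \bar\sigma_{ik}\,\partial_i\lf u = (\bar\sigma^\top\diff _\mu u)_k$, the integrand is controlled in Euclidean norm by $C\,\rho_\tau^{s,\mu}(\mathbf{1}) = C\,\rho_\tau^{s,\mu}(\R^d)$ for a constant $C>0$ depending only on $\norm{h}_\infty$, $\norm{\bar\sigma}_\infty$ and the sup-norms of $\lf u,\diff _\mu u$. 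Hence, by Fubini's theorem and the mass estimate of Lemma \ref{lemma: est1},
\begin{equation*}
	\EQ{\int_s^T \Big|\rho_\tau^{s,\mu}\big((h+B)\lf u(\rho_\tau^{s,\mu},\tau)\big)\Big|^2\de\tau} \le C^2\int_s^T \EQ{|\rho_\tau^{s,\mu}(\R^d)|^2}\de\tau \le 2C^2(T-s)\,\mu(\R^d)^2 e^{2T\norm{h}_\infty^2} < \infty.
\end{equation*}

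Therefore the stochastic integral is a square-integrable martingale started at time $s$, so its expectation is zero, and taking $\mathbb{Q}$-expectation of the reduced identity yields $u(\mu,s) = \EQ{\Phi(\rho_T^{s,\mu})}$. As the right-hand side is independent of the particular classical solution, this characterizes $u$ uniquely. I expect the computation to be entirely routine once Corollary \ref{cor: itoztime} is available; the substantive content has already been spent in proving the It\^o formula, and here the main (and essentially only) thing to check carefully is the integrability bound ensuring the vanishing of the martingale term.
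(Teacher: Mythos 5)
Your proposal is correct and takes essentially the same route as the paper: apply the time-dependent It\^o formula of Corollary \ref{cor: itoztime} to $u(\rho_\tau^{s,\mu},\tau)$ on $[s,T]$, cancel the drift using the equation, use the terminal condition, and take $\mathbb{Q}$-expectation so that the stochastic integral vanishes. Your explicit square-integrability bound via Lemma \ref{lemma: est1} justifying the true-martingale property is in fact slightly more careful than the paper, which simply asserts that the stochastic integral is a martingale.
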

\begin{proof}
	Let us consider the composition $u(\rho^{s,\mu}_T,T)$, where $\rho_T^{s,\mu}$ is the solution to \eqref{eqn: zakai} starting at time $s$ with value $\mu$. Then, by the It\^o formula we get
	\begin{equation*}
		u(\rho^{s,\mu}_T,T) - u(\rho^{s,\mu}_s,s)  = \int_s^T\{\partial_su(\rho^{s,\mu}_\tau,\tau) + \mathcal{L} u(\rho^{s,\mu}_\tau,\tau)\}\de \tau + \int_s^T \mathcal {G}  u(\rho^{s,\mu}_\tau,\tau)\cdot\de Y_\tau,
	\end{equation*}
	where $\mathcal {G}  u (\mu,t) = \mu(h\lf u(\mu,t) + \bar{\sigma}^\top \diff _\mu u(\mu,t))$.
	First, we notice that since $ u$ is a solution of \eqref{eqn: kolmogorov}, the time integral is zero, and by taking the expectation we get
	\begin{equation*}
		\EQ{\Phi\left(\rho^{s,\mu}_T\right)} -  u(\mu,s)  = \EQ{\int_s^T \mathcal{G}  u(\rho^{s,\mu}_\tau,\tau)\cdot\de Y_\tau}.
	\end{equation*}
	The right hand side is equal to zero since the stochastic integral in the expected value is a martingale. Then the thesis follows, since for every $(\mu,s)\in\mathcal{M}_2^+(\R^d)\times [0,T]$,
	\begin{equation*}
		\EQ{\Phi\left(\rho^{s,\mu}_T\right)} =  u(\mu,s).
	\end{equation*}
\end{proof}
In order to prove the existence of a solution, we will show that a function $u$ defined by \eqref{eqn: reprformula} is regular enough and satisfies \eqref{eqn: kolmogorov}. In order to do that, we need some auxiliary results on the differentiability of $u$ with respect to the measure argument that we collect in the following proposition:
\begin{proposition}\label{prop: uinc2}
	Let $u(\mu,s) = \EQ{\Phi(\rho_T^{s,\mu})}$, where $\rho_T^{s,\mu}$ is the solution to the Zakai equation starting at time $s$ from $\mu\in\mathcal{M}_2^+(\R^d)$ and $\Phi\in \mathrm{C^2_L}(\mathcal{M}^+_2(\R^d))$. Then, for every $s\in[0,T]$, $u$ is in $\mathrm {C^2_L}(\mathcal{M}_2^+(\R^d))$.
\end{proposition}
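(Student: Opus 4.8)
The plan is to produce $u$ and its derivatives by composing the (affine, hence very rigid) solution map $\mu\mapsto\rho_T^{s,\mu}$ with $\Phi$ and then taking the expectation, reading off the candidate derivatives from the chain rule of Proposition \ref{prop: complflfm}. The crucial structural simplification is that the Zakai equation is \emph{linear} in its initial datum: by Proposition \ref{prop: lfzakai1} and $\rho_T^{s,0}=0$ one has, weakly and almost surely, $\scalprod{\rho_T^{s,\mu}}{\psi}=\int_{\R^d}\scalprod{Z_T^s(x)}{\psi}\mu(\de x)$, the measure-valued linear functional derivative of $\mu\mapsto\rho_T^{s,\mu}$ equals $Z_T^s(x)$ and is independent of $\mu$, and its second-order derivative $U^s(x,y)$ vanishes identically (Remark \ref{rmk: abuseofnot}). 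Accordingly I would take as candidates
\[
\lf u(\mu,x)=\EQ{\scalprod{Z_T^s(x)}{\lf\Phi(\rho_T^{s,\mu},\cdot)}},\qquad
\lf^2 u(\mu,x,y)=\EQ{\scalprod{Z_T^s(x)\otimes Z_T^s(y)}{\lf^2\Phi(\rho_T^{s,\mu},\cdot,\cdot)}},
\]
the term carrying $U^s$ being absent precisely because $U^s\equiv0$.

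First I would establish the identity \eqref{eqn: fder} for $u$. Using the linearity just recalled, $\rho_T^{s,\mu_\theta}=(1-\theta)\rho_T^{s,\mu}+\theta\rho_T^{s,\mu'}$ weakly and almost surely for $\mu_\theta=(1-\theta)\mu+\theta\mu'$, so applying the linear functional derivative of $\Phi$ pathwise gives $\Phi(\rho_T^{s,\mu'})-\Phi(\rho_T^{s,\mu})=\int_0^1\scalprod{\rho_T^{s,\mu'}-\rho_T^{s,\mu}}{\lf\Phi(\rho_T^{s,\mu_\theta},\cdot)}\de\theta$. The delicate point is that the identity of Proposition \ref{prop: lfzakai1}(i) holds, for each \emph{fixed} $\psi$, only almost surely, whereas here the test function $\lf\Phi(\rho_T^{s,\mu_\theta},\cdot)$ is random and coupled to $\rho$. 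I would remove this difficulty by upgrading the scalar identity to an identity between random finite signed measures: testing against a countable measure-determining family in $\mathrm{C_b}(\R^d)$ and using continuity shows that, almost surely, $\rho_T^{s,\mu'}-\rho_T^{s,\mu}$ and $\int_{\R^d}Z_T^s(x)[\mu'-\mu](\de x)$ coincide as measures; one may then integrate the bounded continuous random function $\lf\Phi(\rho_T^{s,\mu_\theta},\cdot)$ against both, apply Fubini (legitimate thanks to the uniform mass bound $\E{\scalprod{Z_T^s(x)}{\mathbf 1}^2}\le C$ of Proposition \ref{prop: lfzakai1}(ii)), and finally take $\EQ{\cdot}$ and interchange it with $\int_0^1\int_{\R^d}$. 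This yields \eqref{eqn: fder} with the candidate $\lf u$ above, so $u(\cdot,s)\in\mathrm{C^1}(\mathcal{M}_2^+(\R^d))$.

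Boundedness and continuity of $\lf u$ and $\lf^2 u$ then follow from $\Phi\in\mathrm{C^2_L}$: positivity of $Z_T^s(x)$ gives $\vert\lf u(\mu,x)\vert\le\norm{\lf\Phi}_\infty\EQ{\scalprod{Z_T^s(x)}{\mathbf 1}}\le\norm{\lf\Phi}_\infty C^{1/2}$, and weak continuity in $\mu$ is obtained by dominated convergence from the continuity of $\lf\Phi$ together with the $L^2(\Omega)$-continuity of $\mu\mapsto\rho_T^{s,\mu}$. For the $L$-derivatives $\diff_\mu u=\diff_x\lf u$, $\diff_\mu^2 u$ and $\diff_x\lf^2 u$, I would first reduce to cylindrical $\Phi$ through the approximation machinery of Section \ref{sec: approx} (Lemmas \ref{lemma: approxsuppcomp}, \ref{lemma: approxcilgen}, \ref{lemma: approxcilgencil}): since those lemmas produce approximants converging together with all the derivatives entering $\mathrm{C^2_L}$, and since $\Phi\mapsto u$ and the candidate formulas are linear and continuous in these data, it suffices to treat $\Phi(\mu)=g(\scalprod{\mu}{\psi_1},\dots,\scalprod{\mu}{\psi_n})$. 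For such $\Phi$ one has $u(\mu,s)=\EQ{g(\scalprod{\mu}{\zeta_1},\dots,\scalprod{\mu}{\zeta_n})}$ with the random fields $\zeta_i(x)\defeq\scalprod{Z_T^s(x)}{\psi_i}$, and $\diff_x$ acts only on the $\zeta_i$. The spatial differentiation is then justified by showing that the difference quotients $h^{-1}\big(\scalprod{Z_T^s(x+he_k)}{\psi_i}-\scalprod{Z_T^s(x)}{\psi_i}\big)$ converge in $L^2(\Omega)$: the increment solves a Zakai equation started from $h^{-1}(\delta_{x+he_k}-\delta_x)$, whose signed, distribution-valued limit is the solution from $-\partial_k\delta_x$, with mean $\partial_{x_k}v^{\psi_i}(x,s)$ matching Proposition \ref{prop: lderzakai}; energy and Gronwall estimates for the Zakai equation provide the $L^2(\Omega)$ convergence and the domination needed to pass $\diff_x$ through the bounded random weights $\partial_k g(\cdot)$ and through $\EQ{\cdot}$. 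The characterizations of Proposition \ref{prop: sim2der}, together with $U^s\equiv0$, then identify $\diff_\mu^2 u$ and $\diff_x\lf^2 u$ and yield their continuity and boundedness.

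The main obstacle is exactly this spatial regularity in the presence of coupling: since $\rho_T^{s,\mu}$ and $Z_T^s(x)$ live on the same probability space and are correlated (indeed $\rho_T^{s,\mu}=\int_{\R^d}Z_T^s(x)\mu(\de x)$), the quantities to be differentiated in $x$ are randomly weighted expectations $\EQ{G\,\scalprod{Z_T^s(x)}{\psi}}$ with $G=G(\omega)$ coupled to $Z$, so the \emph{mean} regularity supplied by Proposition \ref{prop: lderzakai} is not by itself enough and must be complemented by pathwise, $L^2(\Omega)$ control of $x\mapsto Z_T^s(x)$. Establishing this control, that is, differentiating the Zakai flow in its starting point and justifying the interchange of $\diff_x$ with $\EQ{\cdot}$ against a coupled weight, is the technical heart of the argument; once it is in place, collecting the steps above shows $u(\cdot,s)\in\mathrm{C^2_L}(\mathcal{M}_2^+(\R^d))$ for every $s\in[0,T]$.
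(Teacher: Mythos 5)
Your candidate derivatives are exactly the paper's, and your first-order argument --- upgrading the per-$\psi$ almost sure identity of Proposition \ref{prop: lfzakai1} to an almost sure identity between random measures via a countable measure-determining family, then applying Fubini against the random weight $\lf\Phi(\rho_T^{s,\mu_\theta},\cdot)$ --- is sound, and if anything more careful than the paper's terse appeal to the chain-rule argument of Proposition \ref{prop: complflfm}; it correctly addresses the caveat of Remark \ref{rmk: abuseofnot}. The proof breaks down, however, at the spatial ($L$-derivative) regularity, in two places. First, the reduction to cylindrical $\Phi$ via Lemmas \ref{lemma: approxsuppcomp}, \ref{lemma: approxcilgen} and \ref{lemma: approxcilgencil} cannot establish membership in $\mathrm{C^2_L}$: except for the innermost Lemma \ref{lemma: approxcilgencil} (uniform only on the compact sets $\mathcal{H}_N^k$ and only for a special class of functions), these lemmas yield \emph{pointwise} convergence of the derivatives with uniform norm bounds, and neither differentiability in $x$ nor the joint continuity in $(\mu,x)$ demanded by Definitions \ref{def: ldiff} and \ref{def: c2lderivatives} survives pointwise limits: uniform bounds on $\diff_x\diff_\mu u^n$ give at best local equicontinuity in $x$ of $\diff_\mu u^n$, never equicontinuity in $\mu$, and for $\diff_\mu^2 u = \diff_x\diff_y^\top\lf^2 u$ one would need bounds on third-order spatial derivatives that the lemmas do not supply. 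This is precisely why the paper deploys the approximation machinery of Section \ref{sec: approx} only in the proofs of the It\^o formulas (Propositions \ref{prop: itoz} and \ref{prop: itoks}), where one passes to the limit in an \emph{identity} by dominated convergence, and never to prove that a given function is smooth.

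Second, the step you yourself flag as the technical heart --- $L^2(\Omega)$ differentiability of $x\mapsto Z^s_T(x)$, needed because the weight $\lf\Phi(\rho_T^{s,\mu},\cdot)$ is random and coupled to $Z^s$ --- is left unproved, and the sketched route would not go through inside this framework: a ``Zakai equation started from $-\partial_k\delta_x$'' lies outside the available well-posedness theory (Theorem \ref{thm: uniqhl} and Remark \ref{rmk: strong} concern initial data in $\mathcal{M}^+(\R^d)$, not signed distributions), and the only energy estimates at hand (e.g.\ Lemma \ref{lemma: est1}, Remark \ref{rmk: bddmass}) control the total mass through testing with $\mathbf{1}$, not a dual norm in which your distribution-valued limit of difference quotients could be measured; making this rigorous would require precisely the Hilbert/Sobolev density machinery the paper is built to avoid. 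The paper's proof never differentiates $Z^s_T(x)$ pathwise: it differentiates only expectations, identifying $x\mapsto\E{\scalprod{Z^s_t(x)}{\psi}}$ with the classical solution $v(x,s)$ of the deterministic backward PDE \eqref{eqn: adjeq} via the Fokker--Planck equation for the intensity measure (Proposition \ref{prop: lderzakai}, packaged in Proposition \ref{prop: summarize}), and treats the coupled second-order quantity $(x,y)\mapsto\EQ{\scalprod{Z^s_T(x)\otimes Z^s_T(y)}{\psi}}$ by combining that technique with the duality argument of Theorem 4.26 in \cite{baincrisan}. To repair your proof you would either have to switch to this deterministic PDE route, or genuinely construct the derivative of the Zakai flow in a negative Sobolev space --- a substantial piece of work that the proposal only gestures at.
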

\begin{proof}
	Let us first deal with the differentiability in linear functional sense. Proceeding as in Proposition \ref{prop: complflfm}, by a simple chain rule argument we conclude that $u(\mu,s)$ is in $\rm C^2(\mathcal{M}_2^+(\R^d))$, thanks to the fact $\Phi\in \mathrm {C_L^2}(\mathcal{M}_2^+(\R^d))$ combined with Proposition \ref{prop: summarize}. In particular we get the following formulas:
	\begin{align*}
		\lf u(\mu,x,s) &=\EQ{\scalprod{Z^s_T(x)}{\lf \Phi(\rho_T^{s,\mu})}},\\
		\lf^2 u(\mu,x,y,s) &=\EQ{\scalprod{Z^s_T(x)\otimes Z^s_t(y)}{\lf^2 \Phi(\rho_T^{s,\mu})}},
	\end{align*}
	where $\lf \Phi(\rho_T^{s,\mu},x)$ means $\lf \Phi (\mu,x)$ evaluated in $\rho_T^{s,\mu}$, and $Z^s$ is the process introduced in \eqref{eqn: zakaider}.

Regarding the differentiability of the first-order linear functional derivative with respect the additional space variable, we have that the mapping $x\mapsto\lf u(\mu,x,s)$ is twice continuously differentiable with bounded derivatives thanks to Proposition \ref{prop: summarize} and to the fact that $\lf \Phi(\rho_T^{s,\mu})\in \mathrm {C_b^2}(\R^d)$ for $\mu$ fixed.
In a similar way we can show that the mapping $(x,y)\mapsto \lf^2 u(\mu,x,y)$ is twice continuously differentiable with bounded derivatives. Indeed, if we take a symmetrical function $\psi\in\mathrm{C^2_b}(\R^d\times\R^d)$, we can obtain a version of Proposition \ref{prop: summarize} for the mapping $(x,y)\mapsto \EQ{\scalprod{Z^s_T(x)\otimes Z^s_t(y)}{\psi}}$, by combining the technique used in the proof of Proposition \ref{prop: summarize} and the ideas in the proof of Theorem 4.26 in \cite{baincrisan}.
	\end{proof}
Now that the object $\mathcal{L}u$ is well defined, we need to investigate its regularity with respect to the time.
\begin{lemma}\label{lemma: contofl}
	Let $u$ be defined by \eqref{eqn: reprformula}. Then, for every $\mu\in\mathcal{M}_2^+(\R^d)$, the mappings $[0,T]\ni s\mapsto\mathcal{L}u(\mu,s)$ and $[s,T]\times[0,T]\ni(\tau,\sigma)\mapsto\mathcal{L}u(\rho_\tau^{s,\mu},\sigma)\in L^2(\Omega)$ are continuous.
\end{lemma}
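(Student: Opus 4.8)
The plan is to expand $\mathcal{L}u(\mu,s)$ into its constituent pieces and reduce both continuity statements to the continuity in the initial time of the representing processes. Recalling the formulas from Proposition \ref{prop: uinc2},
\begin{gather*}
	\lf u(\mu,x,s) = \EQ{\scalprod{Z^s_T(x)}{\lf \Phi(\rho_T^{s,\mu})}}, \\
	\lf^2 u(\mu,x,y,s) = \EQ{\scalprod{Z^s_T(x)\otimes Z^s_T(y)}{\lf^2 \Phi(\rho_T^{s,\mu})}},
\end{gather*}
and applying the spatial operators $A$, $B$ and the multiplication by $h$ in the variables $x,y$, the number $\mathcal{L}u(\mu,s)$ is a finite combination of integrals against $\mu$ and $\mu\otimes\mu$ of functions that, by Proposition \ref{prop: uinc2}, are bounded and continuous in $(x,s)$ (resp. $(x,y,s)$). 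Hence it suffices to prove that, for fixed $x$ (and $x,y$), each integrand is continuous in $s$ and admits a bound uniform in $s$ (and in the spatial variable); the continuity of $s\mapsto\mathcal{L}u(\mu,s)$ then follows by dominated convergence, the $\mu$- and $\mu\otimes\mu$-integrability being ensured by the finiteness of the mass and second moment of $\mu\in\mathcal{M}_2^+(\R^d)$.

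The core step is therefore the continuity in $s$ of the representing processes tested against fixed functions. First I would show that $s\mapsto\scalprod{\rho_T^{s,\mu}}{\psi}$ and $s\mapsto\scalprod{Z^s_T(x)}{\psi}$ are continuous in $L^2(\Omega)$ for every $\psi\in\mathrm{C^2_b}(\R^d)$, locally uniformly in $x$, together with the analogous statement for the second-order object $U^s(x,y)$ of Remark \ref{rmk: abuseofnot}. This relies on the time-homogeneity and the cocycle property of the Zakai equation (so that $\rho_T^{s,\mu}=\rho_T^{s',\rho_{s'}^{s,\mu}}$), the weak continuity of the trajectories, the continuity with respect to the initial datum from Proposition \ref{prop: lfzakai1}, and the $L^2$ mass estimate of Lemma \ref{lemma: est1}. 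Combining these with the continuity and boundedness of $\lf\Phi$ and $\lf^2\Phi$ (valid since $\Phi\in\mathrm{C^2_L}(\mathcal{M}_2^+(\R^d))$) yields the continuity in $s$ of $\lf u(\mu,x,s)$ and $\lf^2 u(\mu,x,y,s)$; the $s$-uniform bounds needed for dominated convergence follow from Proposition \ref{prop: lfzakai1} \textit{ii.} (whose constant $C(T,h)$ is uniform in $x$ and $t$) and the boundedness of the derivatives of $\Phi$. For the spatial derivatives entering $A\lf u$ and the operators $B$ acting on $\lf^2 u$, I would transfer the $x$- and $y$-derivatives onto the dual backward equation \eqref{eqn: adjeq}, exactly as in the proof of Proposition \ref{prop: lderzakai}: the parabolic regularity of its solution, together with the boundedness of the coefficients under Hypotheses \ref{hp: generali}, furnishes bounds on $\diff_x\lf u$, $\diff_x\diff_x\lf u$ and the mixed derivatives of $\lf^2 u$ that are uniform and continuous in $s$, locally uniformly in the spatial variables. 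Dominated convergence against $\mu$ and $\mu\otimes\mu$ then closes the argument for the first mapping.

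For the map $(\tau,\sigma)\mapsto\mathcal{L}u(\rho_\tau^{s,\mu},\sigma)\in L^2(\Omega)$, I would argue pathwise and then upgrade to $L^2(\Omega)$. For fixed $\sigma$, the map $\nu\mapsto\mathcal{L}u(\nu,\sigma)$ is continuous for the weak topology on sets of bounded mass, by the regularity of the derivatives of $u$ in the measure argument (Proposition \ref{prop: uinc2}) and the boundedness of the coefficients; since $\tau\mapsto\rho_\tau^{s,\mu}$ has continuous trajectories in the weak topology with continuous total mass, and since the continuity in $\sigma$ established above holds along the random measure $\rho_\tau^{s,\mu}\in\mathcal{M}_2^+(\R^d)$ almost surely, the map $(\tau,\sigma)\mapsto\mathcal{L}u(\rho_\tau^{s,\mu},\sigma)$ is jointly continuous almost surely. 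To pass to $L^2(\Omega)$ I would dominate: the boundedness of the derivatives of $u$ and of the coefficients gives $|\mathcal{L}u(\nu,\sigma)|\le C(1+\nu(\R^d)^2)$, whence $\sup_{\tau,\sigma}|\mathcal{L}u(\rho_\tau^{s,\mu},\sigma)|^2\le C(1+\sup_{\tau}\rho_\tau^{s,\mu}(\R^d)^4)$, which is integrable by the moment bound of Remark \ref{rmk: bddmass} with $\alpha=4$; dominated convergence then yields the $L^2(\Omega)$-continuity.

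The main obstacle I expect is the continuity in the initial time $s$ of the derivative processes $Z^s$ and $U^s$, and of their spatial derivatives, with enough uniformity to interchange the limit with the spatial integration and the expectation. The difficulty is compounded by the fact that $Z^s_T(x)$ is correlated with $\rho_T^{s,\mu}$ (both are driven by the same $Y$, and $\Phi$'s derivatives are evaluated at $\rho_T^{s,\mu}$), which rules out a purely deterministic PDE argument and forces one to combine the flow estimates with the dual-equation regularity of Proposition \ref{prop: lderzakai}.
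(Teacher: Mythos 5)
Your proposal is correct and follows essentially the same route as the paper: the paper's (much terser) proof likewise reduces both claims to the $L^2(\Omega)$-continuity of $s\mapsto\scalprod{\rho_t^{s,\mu}}{\psi}$ for $\psi\in\mathrm{C^2_b}(\R^d)$ (invoked as ``classical estimates'' on \eqref{eqn: zakai}), the representation of $\lf u$ and $\lf^2 u$ from Proposition \ref{prop: uinc2}, the boundedness of $\Phi$ and its derivatives, and the continuity of the trajectories of $\rho^{s,\mu}$ for the joint map $(\tau,\sigma)\mapsto\mathcal{L}u(\rho^{s,\mu}_\tau,\sigma)$. Your extra details --- continuity in $s$ of $Z^s$ via the flow property, the dual-equation regularity of Proposition \ref{prop: lderzakai} for the spatial derivatives, and the domination through Remark \ref{rmk: bddmass} (with $\alpha=4$) to upgrade pathwise continuity to $L^2(\Omega)$ --- are precisely the steps the paper leaves implicit, not a different argument.
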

\begin{proof}
	Let us fix $t\in[s,T]$. Then, by classical estimates on \eqref{eqn: zakai}, it follows that for every $\psi\in\mathrm{C^2_b}(\R^d)$ the mapping $[0,T]\ni s\mapsto \scalprod{\rho_t^{s,\mu}}{\psi}\in L^2(\Omega)$ is continuous. Thus, combining this with the expression for the derivatives of $u$ in Proposition \ref{prop: uinc2} and the boundedness of $\Phi$ with its derivatives, we get that $[0,T]\ni s\mapsto\mathcal{L}u(\mu,s)$ is continuous. Regarding $[s,T]\times[0,T]\ni(\tau,\sigma)\mapsto\mathcal{L}u(\rho_\tau^{s,\mu},\sigma)$, again we can conclude recalling that $\Phi\in \mathrm{C^2_L}(\mathcal{M}_2^+(\R^d))$ combined with Proposition \ref{prop: uinc2} and the continuity of $\rho^{s,\mu}$.
\end{proof}
Finally we can show the main result, that is the existence of a solution to \eqref{eqn: kolmogorov} via representation formula:
\begin{theorem}\label{thm: exbkwzakai}
 Let $u(\mu,s) = \EQ{\Phi(\rho_T^{s,\mu})}$, where $\rho_T^{s,\mu}$ is the solution to the Zakai equation starting at time $s$ from $\mu\in\mathcal{M}_2^+(\R^d)$, $\Phi\in\mathrm{C^2_L}(\mathcal{M}_2^+(\R^d))$ and let Hypotheses \ref{hp: generali} hold. Then it is the unique classical solution to the backward Kolmogorov equation \eqref{eqn: kolmogorov}.
\end{theorem}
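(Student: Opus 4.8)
Uniqueness has already been secured by the representation formula, so the entire remaining content of the theorem is \emph{existence}: one must verify that the candidate $u(\mu,s)=\EQ{\Phi(\rho_T^{s,\mu})}$ is genuinely a classical solution. The regularity in the measure variable comes for free from Proposition \ref{prop: uinc2}, which gives $u(\cdot,s)\in\mathrm{C^2_L}(\mathcal{M}_2^+(\R^d))$ for each $s$; moreover the explicit formulas for $\lf u$ and $\lf^2 u$ obtained there, combined with the boundedness of $\Phi$ and all its derivatives and the moment bounds for the derivative processes established in Proposition \ref{prop: lfzakai1}, yield boundedness of $u$ together with its derivatives. The terminal condition is immediate, since $\rho_T^{T,\mu}=\mu$ forces $u(\mu,T)=\EQ{\Phi(\mu)}=\Phi(\mu)$. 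Thus the crux is to show that $s\mapsto u(\mu,s)$ is of class $\mathrm{C}^1$ and satisfies $\partial_s u+\mathcal{L}u=0$.

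The engine of the argument is the flow (tower) property
\begin{equation*}
u(\mu,s)=\EQ{u(\rho_r^{s,\mu},r)},\qquad s\le r\le T,
\end{equation*}
which follows from the Markov property of the Zakai solution recorded after Theorem \ref{thm: uniqhl}: conditioning $\Phi(\rho_T^{s,\mu})$ on $\mathcal{F}_r$ produces $u(\rho_r^{s,\mu},r)$, and taking expectations gives the identity.

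To extract the time derivative I would \emph{freeze the time slot} and apply the time-independent It\^o formula of Proposition \ref{prop: itoz} to the function $u(\cdot,s+\epsilon)\in\mathrm{C^2_L}(\mathcal{M}_2^+(\R^d))$ along the process started at $\mu$ at time $s$. The stochastic integrals are true $\mathbb{Q}$-martingales, since their integrands are square integrable by the boundedness of $h,\bar\sigma$ and of the first derivatives of $u$ together with the moment bounds of Remark \ref{rmk: bddmass}; hence taking expectations and using the flow property at $r=s+\epsilon$ yields
\begin{equation*}
u(\mu,s)-u(\mu,s+\epsilon)=\EQ{\int_s^{s+\epsilon}\mathcal{L}u(\rho_\sigma^{s,\mu},s+\epsilon)\,\de\sigma}.
\end{equation*}
Dividing by $\epsilon$ and letting $\epsilon\downarrow 0$, the right-hand side converges to $\mathcal{L}u(\mu,s)$ by the joint continuity statement of Lemma \ref{lemma: contofl}, so that $\partial_s^+u(\mu,s)=-\mathcal{L}u(\mu,s)$. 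The symmetric computation on $[s-\epsilon,s]$ gives $\partial_s^-u(\mu,s)=-\mathcal{L}u(\mu,s)$; here the varying initial time is handled by time-homogeneity of the coefficients (equivalently, by reformulating through $v(\mu,t)=\EQ{\Phi(\rho_t^{0,\mu})}$, so that $u(\mu,s)=v(\mu,T-s)$ and every process starts at time $0$), which reduces the needed continuity to the fixed-initial-time form of Lemma \ref{lemma: contofl}. Since $s\mapsto\mathcal{L}u(\mu,s)$ is continuous, the two one-sided derivatives coincide and are continuous, whence $u$ is $\mathrm{C}^1$ in time and $\partial_s u+\mathcal{L}u=0$.

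The main obstacle is precisely this passage to the limit in the difference quotient: one divides by $\epsilon$ the expectation of an integral over $[s,s+\epsilon]$ in which \emph{both} the measure argument $\rho_\sigma^{s,\mu}$ and the frozen time slot $s+\epsilon$ move, so the interchange of the limit with the expectation and the time average is legitimate only because Lemma \ref{lemma: contofl} supplies continuity jointly in these two variables in $L^2(\Omega)$ (which a fortiori gives the $L^1$ convergence used after Fubini). The boundary cases $s=0$ and $s=T$ present no additional difficulty: one simply retains only the admissible one-sided quotient.
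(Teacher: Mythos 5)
Your proposal is correct and follows essentially the same route as the paper: the Markov (flow) property, the It\^o formula of Proposition \ref{prop: itoz} applied with the time slot frozen at $s+\epsilon$, the martingale property of the stochastic integrals, and Lemma \ref{lemma: contofl} together with a mean-value argument to pass to the limit in the difference quotient, with regularity and the terminal condition supplied by Proposition \ref{prop: uinc2} and the representation formula giving uniqueness. The only minor difference is at the end: where you compute the left derivative directly by a symmetric argument using time-homogeneity, the paper establishes only the right derivative and invokes the standard real-analysis fact that a continuous function with a continuous right derivative is continuously differentiable.
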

\begin{proof}
	Let us fix $h$ small and positive. We want to show that
	\begin{equation}\label{eqn: 9}
		\lim_{h\to0}\frac{1}{h}\left[u(\mu,s+h)-u(\mu,s)\right] = -\mathcal{L}u(\mu,s).
	\end{equation}
	If this is true, the mapping $g\colon s\mapsto g(s)\defeq u(\mu,s)$ has right derivative in $[0,T)$. Moreover, by Lemma \ref{lemma: contofl} the right-hand term in \eqref{eqn: 9} is continuous, so $g\in\mathrm {C^1}([0,T))$ and by a standard argument it can be shown that it is continuously differentiable in $[0,T]$.\\	
	Let us show \eqref{eqn: 9}. First, thanks to the Markov property of the process $\rho^{s,\mu}$ it holds that
		$u(\mu,s) = \EQ{u\left(\rho_{s+h}^{s,\mu},s+h\right)}$.
	Then, we can proceed by applying It\^o formula and taking the expectation:
	\begin{equation*}
		u(\mu,s+h) - u(\mu,s) = \EQ{u(\mu,s+h)-u\left(\rho_{s+h}^{s,\mu},s+h\right)} = -\EQ{\int_s^{s+h}\mathcal{L} u (\rho_\tau^{s,\mu},s+h)\de\tau}.
	\end{equation*}
	To conclude, it remains to show that
	\begin{equation*}\label{eqn: 10}
		\lim_{h\to0}\frac{1}{h}\EQ{\int_s^{s+h}\mathcal{L} u (\rho_\tau^{s,\mu},s+h)\de\tau} =\mathcal{L} u(\mu,s),
	\end{equation*}
	but this follows from Lemma \ref{lemma: contofl} and mean-value theorem.
\end{proof}

\begin{remark}
	All the previous results can be extended to the time inhomogeneous case, that is when the coefficients $b,\sigma,\bar\sigma$ depend also on time, by assuming that Hypotheses \ref{hp: generali} hold with uniform in time constants.
\end{remark}
\section{The backward Kolmogorov equation associated to the Kushner-Stratonovich equation}\label{sec: kseqn}
Our goal in this last section is to prove existence and uniqueness for the backward Kolmogorov equation associated to the Kushner-Stratonovich equation. We will proceed by exploiting the relation with the Zakai equation, pointed out at the end of Section \ref{sec: eqnfilter}. Let us fix $(\Omega, \mathcal{F},\{\mathcal{F}_t\},\mathbb{Q}, \{Y_t\})$ and let $\rho^{s,\pi}$ be a solution to \eqref{eqn: zakai} starting at $\pi\in\ptwo$. Let us define the couple
\begin{equation}\label{eqn: ksfromz1}
	{I}^\pi_t = {Y}_t - \int_0^t\frac{{\rho_\tau^{s,\pi}}(h)}{{\rho_\tau^{s,\pi}}(\R^d)}\de \tau,\quad \xi^\pi_t = \exp\left\{\int_0^t \frac{{\rho_\tau^{s,\pi}}(h)}{{\rho_\tau^{s,\pi}}(\R^d)}\de {Y}_\tau - \frac{1}{2}\int_0^t\left|\frac{{\rho}_\tau^{s,\pi}(h)}{{\rho}_\tau^{s,\pi}(\R^d)}\right|^2\de \tau\right\},
\end{equation}
and set 
	\begin{equation}\label{eqn: ksfromz2}
		\de {\mathbb{P}^\pi} = \xi^\pi_T\de{\mathbb Q},\quad \Pi^{s,\pi}  = \rho^{s,\pi} / \rho^{s,\pi}(\R^d). 
	\end{equation}
As remarked in Section \ref{sec: eqnfilter}, if we assume Hypotheses \ref{hp: generali}, then the couple $\{({\Omega},\mathcal{{F}},\{{\mathcal{F}}_t\},{\mathbb{P}^\pi}),$ $({\Pi}^{s,\pi}_t,{I^\pi_t})\}$ is the unique in law weak solution to the Kushner-Stratonovich equation starting at $\pi\in\ptwo$. In particular, for every $t\in[s,T]$ and $\psi\in\mathrm{C^2_b}(\R^d)$ it holds
\begin{equation}\label{eqn: ks}
	\Pi^{s,\pi}_t(\psi) = \pi(\psi) + \int_0^t \Pi^{s,\pi}_s(A\psi)\de s + \int_0^t \left( \Pi^{s,\pi}_s(h \psi+ B\psi) - \Pi^{s,\pi}_s(\psi)\Pi^{s,\pi}_s(h)\right)\cdot\de I^{s,\pi}_s.
\end{equation}
\subsection{It\^o formula for the non-linear filtering equation}
As for the Zakai equation, is interesting to study the It\^o formula for the composition of the process $\Pi^{s,\pi}$ with a function $u\in\mathrm{C^2_L}(\ptwo)$. The following result is stated for simplicity in the case $s=0$ and hiding the dependence on the initial condition $\pi$. Moreover it holds for a generic weak solution to the Kushner-Stratonovich equation, and not only for the ones obtained from the solutions to the Zakai equation.
\begin{proposition}\label{prop: itoks}
	Let $\{(\Omega,\mathcal{F},\{\mathcal{F}_t\},\mathbb{P}),(\Pi_t,I_t)\}$ be a weak solution to the Kushner-Stratonovich equation starting from $\pi\in\ptwo$ and let $u\in\mathrm{C^2_L}(\ptwo)$. Moreover, let us assume Hypotheses \ref{hp: generali}. Then for every $t\in[0,T]$ it holds:
	\begin{equation}\label{eqn: itoks}
		\begin{aligned}
			&u(\Pi_t) = u(\pi) + \int_0^t\Pi_s\left(\diff _\mu u(\Pi_s)\cdot f\right)\de s\\
			& \ +\int_0^t \frac{1}{2}\Pi_s\left(\tr\left\{\diff _x\diff _\mu u(\Pi_s)\sigma\sigma^\top\right\}\right)\de s+ \int_0^t\frac{1}{2}\Pi_t\left(\tr\left\{\diff _x\diff _\mu u(\Pi_s)\bar{\sigma}\bar{\sigma}^\top\right\}\right)\de s \\
			& \ + \int_0^t\frac{1}{2}\Pi_s\otimes\Pi_s\left(\lf^2u(\Pi_s)h\cdot h\right)\de s
			 + \int_0^t\frac{1}{2}\Pi_s\otimes\Pi_s\left(\tr\left\{\diff _\mu^2u(\Pi_s)\bar{\sigma}\bar{\sigma}^\top\right\}\right)\de s\\
			& \ + \int_0^t\frac{1}{2}\left[\Pi_s(h)\cdot\Pi_s(h)\right]\Pi_s\otimes\Pi_s\left(\lf^2u(\Pi_s)\right)\de s 
			 + \int_0^t\Pi_s\otimes\Pi_s\left(h\cdot\bar{\sigma}^\top\lf \diff _\mu u(\Pi_s)\right)\de s\\
			& \ - \int_0^t \Pi_s\otimes\Pi_s\left(\lf^2 u(\Pi_s)h\right)\cdot\Pi_s(h)\de s
			 \ - \int_0^t \Pi_s\otimes\Pi_s\left(\bar{\sigma}^\top\lf \diff _\mu u(\Pi_s)\right)\cdot \Pi_s(h)\de s\\
			 & \ +\int_0^t\Pi_s\left(h\lf u(\Pi_s)\right)\cdot\de I_s  +\int_0^t\Pi_s\left(\bar{\sigma}^\top \diff _\mu u(\Pi_s)\right)\cdot\de I_s - \int_0^t\Pi_s\left(\lf u (\Pi_s)\right)\Pi_s(h)\cdot\de I_s,
		\end{aligned}
	\end{equation}
	almost surely.
\end{proposition}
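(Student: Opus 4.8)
The plan is to mirror the proof of Proposition \ref{prop: itoz}: first establish \eqref{eqn: itoks} for cylindrical $u$ by the classical It\^o formula applied to the weak form \eqref{eqn: weakks}, and then pass to a general $u\in\mathrm{C^2_L}(\ptwo)$ by approximation. The starting observation is that, for an arbitrary weak solution and any $\psi\in\mathrm{C^2_b}(\R^d)$, one has $\de\Pi_t(\psi)=\Pi_t(A\psi)\,\de t+M_t(\psi)\cdot\de I_t$, where $M_t(\psi)\defeq\Pi_t(h\psi+B\psi)-\Pi_t(\psi)\Pi_t(h)\in\R^d$ and $I$ is an $\R^d$-valued $\{\mathcal F_t\}$-Brownian motion. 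Working directly with \eqref{eqn: weakks} reduces the cylindrical case to a finite-dimensional It\^o computation and, crucially, keeps the argument valid for any weak solution, so that no appeal to the Zakai construction or to uniqueness in law is needed.

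First I would take $u(\pi)=g(\pi(\psi_1),\dots,\pi(\psi_n))$ with $g\in\mathrm{C^2_b}(\R^n)$ and $\psi_i\in\mathrm{C^2_b}(\R^d)$, and apply the classical It\^o formula to $g(\Pi_t(\psi_1),\dots,\Pi_t(\psi_n))$. The first-order part $\sum_k\partial_kg\,\Pi_s(A\psi_k)=\Pi_s(A\lf u(\Pi_s))$ expands, through Example \ref{ex: cyl}, into the drift terms carrying $f$, $\sigma\sigma^\top$ and $\bar\sigma\bar\sigma^\top$; the martingale part $\sum_k\partial_kg\,M_s(\psi_k)\cdot\de I_s$ reproduces the three stochastic integrals of \eqref{eqn: itoks}, the decomposition $B\psi_k=\bar\sigma^\top\diff_x\psi_k$ giving the $h\lf u$ and $\bar\sigma^\top\diff_\mu u$ contributions and the nonlinear correction $-\Pi_s(\psi_k)\Pi_s(h)$ giving the term $-\Pi_s(\lf u)\Pi_s(h)\cdot\de I_s$.

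The heart of the computation is the second-order bracket $\frac{1}{2}\sum_{k,l}\partial_k\partial_lg\,M_s(\psi_k)\cdot M_s(\psi_l)\,\de s$. Expanding each product $M_s(\psi_k)\cdot M_s(\psi_l)$ into the contributions coming from $\Pi_s(h\psi+B\psi)$ and from $\Pi_s(\psi)\Pi_s(h)$, and identifying the coefficients $\partial_k\partial_lg$ with the second derivatives of $u$ via Example \ref{ex: cyl}, produces exactly the six second-order $\de s$-terms of \eqref{eqn: itoks}: the products of $\Pi_s(h\psi+B\psi)$ give the $\lf^2u\,(h\cdot h)$, $h\cdot\bar\sigma^\top\lf\diff_\mu u$ and $\tr\{\diff_\mu^2u\,\bar\sigma\bar\sigma^\top\}$ terms, while the factors $\Pi_s(\psi)\Pi_s(h)$ generate precisely the three correction terms $\frac{1}{2}[\Pi_s(h)\cdot\Pi_s(h)]\,\Pi_s\otimes\Pi_s(\lf^2u)$, $-\Pi_s\otimes\Pi_s(\lf^2u\,h)\cdot\Pi_s(h)$ and $-\Pi_s\otimes\Pi_s(\bar\sigma^\top\lf\diff_\mu u)\cdot\Pi_s(h)$, which have no analogue in the Zakai formula and account for the difference between $\mathcal L^{KS}$ and $\mathcal L$. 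I expect this bracket bookkeeping to be the main obstacle, since one must pair each scalar and vector contraction correctly and keep track of the symmetry in the spatial variables (Proposition \ref{prop: sim2der}).

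Finally I would remove the cylindrical restriction as in Steps 2--5 of Proposition \ref{prop: itoz}, now using the probability-measure versions of the approximation results (Remark \ref{rmk: lemmasprob}): approximate $u$ first by the support-truncated functions $u(\lambda_1^N\mu+\lambda_2^N\delta_0)$, then by the empirical projections of Lemma \ref{lemma: approxcilgen}, and finally by the cylindrical functions of Lemma \ref{lemma: approxcilgencil}, for each of which \eqref{eqn: itoks} already holds. The passage to the limit rests on the pointwise convergence together with the uniform bounds on $u$ and its derivatives and the boundedness of $f,\sigma,\bar\sigma,h$: the $\de s$-integrals converge by dominated convergence and the $\de I$-integrals in $L^2(\Omega)$ by the It\^o isometry. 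Since $\Pi_t$ has unit mass, the mass localization via the stopping times $\tau_k$ used in Proposition \ref{prop: itoz} is unnecessary, and the limiting argument is in fact slightly simpler.
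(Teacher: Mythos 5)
Your proposal is correct and follows essentially the same route as the paper: the paper's proof consists precisely of rerunning the approximation scheme of Proposition \ref{prop: itoz} (classical It\^o formula on cylindrical functions applied to the weak form \eqref{eqn: weakks}, then the approximation steps adapted via Remark \ref{rmk: lemmasprob}), noting that the mass localization can be dropped since $\Pi_t(\R^d)=1$. Your explicit bookkeeping of the quadratic-variation terms, including the three correction terms generated by $-\Pi_s(\psi)\Pi_s(h)$, is an accurate elaboration of what the paper leaves implicit.
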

\begin{proof}
	The proof use the same approximation technique used in Proposition \ref{prop: itoz} proof. One has only to notice that the localization of the mass in the proof of Proposition \ref{prop: itoz} can be avoided, since $\Pi_t(\R^d) = 1$ for every $t\in[0,T]$, and the steps from one to four have to be done keeping in mind Remark \ref{rmk: lemmasprob}.
\end{proof}
\begin{remark} We can rewrite \eqref{eqn: itoks} as
	\begin{multline*}
		\de u (\Pi_t) = \Pi_t(A\lf u (\Pi_t))\de t + \Pi_t((h - \Pi_t(h) + B)\lf u(\Pi_t))\cdot\de I_t \\+ \frac{1}{2}\Pi_t\otimes\Pi_t((h - \Pi_t(h) + B)\cdot(h - \Pi_t(h) + B)\lf^2 u(\Pi_t))\de t.
	\end{multline*}
\end{remark}
\begin{remark}\label{rmk: whymyitoiscool} In the literature, in particular in the mean field games context, some It\^o formulas have been proved for the composition of $\ptwo$-valued processes and real-valued functions over $\ptwo$. A remarkable result can be found \cite[Section 4.3]{carmonadelarue2}, in which the $\ptwo$-valued process is the law of a diffusion process of the form 
\begin{equation*}
	\de X_t = f(X_t)\de s + \sigma(X_t)\de W_t + \bar{\sigma}(X_t)\de B_t,
\end{equation*} 
conditioned to $B$, where $B$ and $W$ are two independent Brownian motions. The main difference with our technique is that we have an explicit equation for the measure-valued process and we use it to deduce the It\^o formula, whilst in the approach of \cite{carmonadelarue2} the result is obtained combining the classical It\^o formula, the empirical projection of the function $u$ and the equation for the process $X$. In particular, a key tool in that approach are some formulas that relate the partial derivatives of the empirical projection $u(n^{-1}\sum_i^n\delta_{x_i})$ with the $L$-derivatives of $u$.  We can also notice that, heuristically, if we set $h$ equal to zero in the filtering problem, the Kushner-Stratonovich equation describe the law of $X$ given the filtration generated by $B$ up to a certain time. In this case, we can see that \eqref{eqn: itoks} coincide with the formula in \cite{carmonadelarue2}. Moreover, our technique also allows to deal with $\mathcal{M}_2^+(\R^d)$-valued processes, as we did in Section \ref{sec: zakai}, thanks to the fact that it is based directly on the equation for the measure-valued process and not on the fact that the measure-valued process has to be a conditional law of a finite dimensional process.
\end{remark}
\begin{remark}
	As for Corollary \ref{cor: itoztime}, an It\^o formula for $u$ depending also on time easily follows from Proposition \ref{prop: itoks}.
\end{remark}
\subsection{The backward Kolmogorov equation}
As we did for the Zakai equation, we want to discuss the existence and uniqueness of classical solutions to the backward Kolmogorov equation associated to the Kushner-Stratonovich equation \eqref{eqn: ks}. Such partial differential equation reads as
\begin{equation}\label{eqn: kolmogorovks}
	\begin{cases}
		\partial_su(\pi,s) + \mathcal{L}^{KS}u(\pi,s) = 0\quad&(\pi,s)\in\ptwo\times[0,T],\\
		u(\pi,T) = \Phi(\pi)\quad&\pi\in\ptwo,
	\end{cases}
\end{equation}
where $\Phi\in\mathrm{C^2_L}(\ptwo)$ and the operator $\mathcal{L}^{KS}\colon\mathrm{C^2_L}(\ptwo)\to\mathrm{C_b}(\ptwo)$ is defined by
\begin{equation*}
 	\mathcal{L}^{KS} u(\pi) = \pi\left(A\lf u(\pi)\right) + \frac{1}{2}\pi\otimes\pi\left((h + B - \pi(h))^\top(h + B - \pi(h))\lf^2 u(\pi)\right),
\end{equation*}
	where $A$ and $B$ are defined by \eqref{eqn: opdiff} and $h$ is Borel measurable and bounded.
\begin{definition}
	We say that $u\colon\ptwo\times[0,T]\to\R$ is a classical solution to \eqref{eqn: kolmogorovks} if it of class $\rm C^2_L(\ptwo)$ in the measure argument and $\mathrm {C}^1([0,T])$ (where in $t=0$ and $t=T$ the derivatives are understood in unilateral sense) in the time argument, if it and all its derivatives are bounded in all their arguments and if it satisfies the backward equation \eqref{eqn: kolmogorovks}. 
\end{definition}
As we did for the Kolmogorov equation associated to the Zakai equation, we want to show existence and uniqueness via a representation formula. Let $\rho^{s,\pi}$ be a solution to the Zakai equation defined over $(\Omega, \mathcal{F},\{\mathcal{F}_t\},\mathbb{Q}, \{Y\})$. Following  \eqref{eqn: ksfromz1}-\eqref{eqn: ksfromz2}, the couple $\{({\Omega},\mathcal{{F}},\{{\mathcal{F}}_t\},{\mathbb{P}^\pi}),$ $({\Pi}^{s,\pi}_t,{I^\pi_t})\}$ solves weakly the Kushner-Stratonovich equation. We can notice that the probability space $(\Omega, \mathcal{F},\mathbb{Q})$ is fixed for every $\pi\in\ptwo$, but since $I^\pi$ and $\xi^\pi$ depend on $\rho^{s,\pi}$, the probability space $(\Omega,\mathcal F,\mathbb P^\pi)$ depends on the initial point $\pi\in\ptwo$. Our claim is that 
\begin{equation*}
	u(\pi,s)\defeq \mathbb{E}^{\mathbb{P}^\pi}\left[\Phi(\Pi^{s,\pi}_T)\right] = \mathbb{E}^{\mathbb{Q}}\left[\Phi(\rho_T^{s,\pi}/\rho_T^{s,\pi}(\R^d))\xi^\pi_T\right]
\end{equation*} is the unique weak solution to \eqref{eqn: kolmogorovks}. In order to study its regularity, we rely on the relations \eqref{eqn: ksfromz2} and the regularity results obtained in Section \ref{ssec: diffmeas} for the Zakai context.
\begin{proposition}\label{prop: regks}
	Let $u(\pi,s)\defeq \mathbb{E}^{\mathbb{P}^\pi}\left[\Phi(\Pi^{s,\pi}_T)\right] = \mathbb{E}^{\mathbb{Q}}\left[\Phi(\Pi^{s,\pi}_T)\xi_T^\pi\right]$ be defined as above and let Hypotheses \ref{hp: generali} hold. Then for every $s\in [0,T]$ the mapping $u(\cdot,s)\in \mathrm{C_L^2}(\ptwo)$. Moreover, the mappings $[0,T]\ni s\mapsto \mathcal{L}^{KS}u(\pi,s)$ and $[s,T]\times[0,T]\ni(\tau,\sigma)\mapsto\mathcal{L}^{KS}u(\Pi_\tau^{s,\pi},\sigma)\in L^2(\Omega,\mathbb {P}^\pi)$ are continuous.
\end{proposition}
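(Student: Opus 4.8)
The plan is to rewrite the Kushner--Stratonovich representation as a Zakai representation of the form already handled in Proposition~\ref{prop: uinc2}, and then to import the regularity. The key remark is that, since $\pi\in\ptwo$ has unit mass, the Girsanov density coincides with the total mass of the Zakai process. Indeed, testing the Zakai equation~\eqref{eqn: zakai} against $\mathbf 1$ (so that $A\mathbf 1 = B\mathbf 1 = 0$) gives $\de\,\rho^{s,\pi}_t(\R^d) = \rho^{s,\pi}_t(h)\cdot\de Y_t$, whence $t\mapsto\rho^{s,\pi}_t(\R^d)$ is the stochastic exponential of $\int\Pi^{s,\pi}_\tau(h)\cdot\de Y_\tau$ started from $\rho^{s,\pi}_s(\R^d) = \pi(\R^d) = 1$; comparing with~\eqref{eqn: ksfromz1} this is exactly $\xi^\pi_T = \rho^{s,\pi}_T(\R^d)$. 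Substituting into the definition of $u$ I obtain
\begin{equation*}
	u(\pi,s) = \EQ{\Psi(\rho^{s,\pi}_T)}, \qquad \Psi(\mu) \defeq \mu(\R^d)\,\Phi\bigl(\mu/\mu(\R^d)\bigr),
\end{equation*}
for $\mu\in\mathcal{M}^+_2(\R^d)$ of positive mass, which is precisely the representation formula~\eqref{eqn: reprformula} with terminal cost $\Psi$ in place of $\Phi$.

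Next I would compute the derivatives of $\Psi$ through the chain and product rules of Section~\ref{ssec: someprop}. Writing $M(\mu) = \mu(\R^d)$ (linear, so $\lf M\equiv 1$, $\lf^2 M\equiv 0$) and $n(\mu) = \mu/\mu(\R^d)$, a direct computation via Remark~\ref{rmk: lflfm} and the quotient rule gives $\lfm n(\mu,x) = \mu(\R^d)^{-1}(\delta_x - n(\mu))$, so Proposition~\ref{prop: complflfm} together with the convention~\eqref{eqn: convention} for $\Phi$ yields $\lf(\Phi\circ n)(\mu,x) = \mu(\R^d)^{-1}\lf\Phi(n(\mu),x)$. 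Proposition~\ref{prop: prodrulelf} then produces the clean identity $\lf\Psi(\mu,x) = \Phi(n(\mu)) + \lf\Phi(n(\mu),x)$, which is bounded. The second derivative does not telescope: $\lf^2\Psi(\mu,x,y)$ carries a factor $\mu(\R^d)^{-1}$ multiplying bounded expressions in $\lf\Phi,\lf^2\Phi$ (and their $x$-gradients for the $L$-derivatives). Hence $\Psi$ is \emph{not} in $\mathrm{C^2_L}(\mathcal{M}^+_2(\R^d))$ with globally bounded derivatives and I cannot invoke Proposition~\ref{prop: uinc2} verbatim; instead I retrace its proof, differentiating $\Psi(\rho^{s,\pi}_T)$ under $\EQ{\cdot}$. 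By Proposition~\ref{prop: lfzakai1} the linear functional derivative of $\pi\mapsto\rho^{s,\pi}_T$ is the $\pi$-independent process $Z^s_T$, while by Remark~\ref{rmk: abuseofnot} the second-order one vanishes by linearity of the Zakai equation; inserting these gives $\lf u(\pi,x,s) = \EQ{\scalprod{Z^s_T(x)}{\lf\Psi(\rho^{s,\pi}_T)}}$ and $\lf^2 u(\pi,x,y,s) = \EQ{\scalprod{Z^s_T(x)\otimes Z^s_T(y)}{\lf^2\Psi(\rho^{s,\pi}_T)}}$, with the $L$-derivatives obtained, exactly as in Proposition~\ref{prop: uinc2}, from Propositions~\ref{prop: summarize} and~\ref{prop: lderzakai} and from the fact that $\lf\Psi(\mu,\cdot)\in\mathrm{C^2_b}(\R^d)$.

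The main obstacle is the integrability required to justify this differentiation across the $\mu(\R^d)^{-1}$ singularity in $\lf^2\Psi$. Here I would exploit that, since $h$ is bounded (Hypotheses~\ref{hp: generali}), the mass $\rho^{s,\pi}_t(\R^d) = \xi^\pi_t$ is an exponential martingale whose reciprocal is, up to a bounded drift correction, again a stochastic exponential; therefore $\EQ{\sup_{t\le T}(\rho^{s,\pi}_t(\R^d))^{-\alpha}}<\infty$ for every $\alpha>0$, complementing the positive moment bound of Remark~\ref{rmk: bddmass}. Combining these two-sided moment estimates with the $L^2$-bound on $\scalprod{Z^s_T(x)}{\mathbf 1}$ from Proposition~\ref{prop: lfzakai1}(ii) (and its higher-order analogues, proved by the Gronwall argument of Lemma~\ref{lemma: est1}) through Hölder's inequality supplies the domination needed for the dominated-convergence arguments. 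This renders all the $\mathbb Q$-expectations finite, bounded and continuous in $(\pi,x,y)$ and, together with the spatial regularity from Proposition~\ref{prop: lderzakai}, yields $u(\cdot,s)\in\mathrm{C^2_L}(\ptwo)$.

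Finally, for the continuity of $s\mapsto\mathcal L^{KS}u(\pi,s)$ and of $(\tau,\sigma)\mapsto\mathcal L^{KS}u(\Pi^{s,\pi}_\tau,\sigma)$ in $L^2(\Omega,\mathbb P^\pi)$ I would argue as in Lemma~\ref{lemma: contofl}. Since $\Pi^{s,\pi}_\tau\in\ptwo$ has unit mass, $\mathcal L^{KS}u$ is evaluated only at probability measures, where the derivatives of $u(\cdot,\sigma)$ are already bounded by the previous step, so the $L^2(\mathbb P^\pi)$-integrability is immediate. The continuity then follows from the explicit representations of $\lf u$ and $\lf^2 u$, the boundedness of $\Phi$ and its derivatives and of $f,\sigma,\bar\sigma,h$, the $L^2$-continuity of $s\mapsto\scalprod{\rho^{s,\mu}_t}{\psi}$ coming from standard estimates on~\eqref{eqn: zakai}, and the path continuity of $\Pi^{s,\pi}$, all combined through dominated convergence.
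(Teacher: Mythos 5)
Your proof is correct, and it reorganizes the computation in a genuinely different way from the paper. The paper keeps $u(\pi,s)=\EQ{\Phi(\Pi^{s,\pi}_T)\xi^\pi_T}$ and differentiates the product in $\pi$ directly, which forces it to compute $\delta_\pi\xi^\pi_T$ from the explicit exponential formula via stochastic and deterministic Fubini; the resulting expression for $\lf u$ contains stochastic-integral terms. You instead use the identification $\xi^\pi_T=\rho^{s,\pi}_T(\R^d)$ (valid because $\pi(\R^d)=1$) to collapse everything into $u(\pi,s)=\EQ{\Psi(\rho^{s,\pi}_T)}$ with $\Psi(\mu)=\mu(\R^d)\Phi(\mu/\mu(\R^d))$, turning the Kushner--Stratonovich problem into a Zakai problem with homogenized terminal cost. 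The two are consistent: under the convention \eqref{eqn: convention}, the paper's first-order terms recombine into your single term $\EQ{\scalprod{Z^s_T(x)}{\lf\Psi(\rho^{s,\pi}_T)}}$, since differentiating $\rho^{s,\pi}_T(\R^d)$ yields $\scalprod{Z^s_T(x)}{\mathbf 1}$ with no Fubini argument at all. Both routes rest on the same two analytic inputs: negative moments of the mass (the paper's opening observation, your exponential-martingale argument) and the regularity of $Z^s$ from Propositions \ref{prop: lfzakai1}, \ref{prop: lderzakai} and \ref{prop: summarize}. You were also right, and this is the one point a careless version would botch, that $\lf^2\Psi$ carries the factor $\mu(\R^d)^{-1}$, so $\Psi$ is not in $\mathrm{C^2_L}(\mathcal{M}^+_2(\R^d))$ with bounded derivatives and Proposition \ref{prop: uinc2} cannot be cited verbatim; your H\"older domination is the correct repair and mirrors what the paper's product-rule computation needs anyway. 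Two cosmetic caveats: $\lfm n(\mu,x)=\mu(\R^d)^{-1}(\delta_x-n(\mu))$ is signed, hence formally outside Definition \ref{def: lfdermeas}, so verify \eqref{eqn: fderm} testwise as in Remark \ref{rmk: lflfm}; and the integrals in \eqref{eqn: ksfromz1} are written from $0$ while $\rho^{s,\pi}$ starts at $s$ --- reading them from $s$, your identification is exact. Neither affects correctness.
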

\begin{proof}
	First, since $\de {\rho^{s,\pi}}(\R^d) = \rho^{s,\pi}(\R^d)\scalprod{\Pi^{s,\pi}_t}{h}\cdot\de Y_t$, the process $1/{\rho^{s,\pi}}(\R^d)$ has uniformly in time bounded $\mathbb Q$ moments of any order $p\in[1,+\infty)$. Then, let us compute the linear functional derivative of $u$ for a fixed $s\in[0,T)$:
	\begin{multline*}
		\delta_\pi u (\pi,x,s) =\mathbb{E}^{\mathbb Q}\left[\xi_T^\pi \delta_\pi\left( \Phi\left(\frac{\rho_T^{s,\cdot}}{\rho_T^{s,\cdot}(\R^d)}\right)\right)(\pi,x) + \Phi\left(\frac{\rho_T^{s,\cdot}}{\rho_T^{s,\cdot}(\R^d)}\right)\delta_\pi \xi_T^\cdot(\pi,x)\right]\\
		=\mathbb{E}^{\mathbb Q}\left[ \frac{\xi^\pi_T}{\rho_T^{s,\pi}(\R^d)}\scalprod{Z^s_T(x)}{\lf \Phi\left(\frac{\rho_T^{s,\pi}}{\rho_T^{s,\pi}(\R^d)}\right)}\right]\\
		-\mathbb{E}^{\mathbb Q}\left[  \frac{\xi^\pi_T}{\rho_T^{s,\pi}(\R^d)}\scalprod{\frac{\rho_T^{s,\pi}}{\rho_T^{s,\pi}(\R^d)}}{\lf \Phi\left(\frac{\rho_T^{s,\pi}}{\rho_T^{s,\pi}(\R^d)}\right)}\scalprod{Z^s_T(x)}{\mathbf 1}\right] \\
		 + \mathbb{E}^{\mathbb Q}\left[\Phi\left(\frac{\rho_T^{s,\pi}}{\rho_T^{s,\pi}(\R^d)}\right)\xi_T^\pi\int_s^T\frac{1}{\rho_\tau^{s,\pi}(\R^d)}\left(\scalprod{Z^s_\tau(x)}{h} - \frac{\scalprod{\rho_\tau^{s,\pi}}{h}}{\rho_\tau^{s,\pi}(\R^d)}\scalprod{Z^s_\tau(x)}{\mathbf 1}\right)\cdot\de Y_\tau\right]\\
		  - \mathbb{E}^{\mathbb Q}\left[\Phi\left(\frac{\rho_T^{s,\cdot}}{\rho_T^{s,\cdot}(\R^d)}\right)\xi_T^\pi \int_s^T\frac{\scalprod{\rho_\tau^{s,\pi}}{h}}{\rho_\tau^{s,\pi}(\R^d)^2}\cdot\left(\scalprod{Z^s_\tau(x)}{h} - \frac{\scalprod{\rho_\tau^{s,\pi}}{h}}{\rho_\tau^{s,\pi}(\R^d)}\scalprod{Z^s_\tau(x)}{\mathbf 1}\right)\de \tau\right],
	\end{multline*}
	where we computed $\delta_\pi \xi_T^\cdot(\pi,x)$ thanks to stochastic and deterministic Fubini's theorem. Continuity and boundedness are guaranteed by the regularity and boundedness of the processes involved under the $\mathbb{Q}$-expectation. In the same way one can show the second-order differentiability in linear functional sense of $u$. Regarding the differentiability in space, again we can bring the derivative in space inside the expectation and exploit the regularity results in Proposition \ref{prop: summarize}. To conclude, the continuity of the mappings $s\mapsto \mathcal{L}^{KS}u(\pi,s)$ and $(\tau,\sigma)\mapsto\mathcal{L}^{KS}u(\Pi_\tau^{s,\pi},\sigma)\in L^2(\Omega,\mathbb {P}^\pi)$ follows as in Lemma \ref{lemma: contofl}.
\end{proof}

Finally, we can state the existence and uniqueness result for the Kolmogorov equation associated to the Kushner-Stratonovich equation \eqref{eqn: ks}:
\begin{theorem}\label{thm: exuniqkskolm}
	Let Hypotheses \ref{hp: generali} holds and let $\{({\Omega},\mathcal{{F}},\{{\mathcal{F}}_t\},{\mathbb{P}^\pi}),({\Pi}^{s,\pi}_t,{I^\pi_t})\}$ be the weak solution to the Kushner-Stratonovich equation obtained trough \eqref{eqn: ksfromz1}-\eqref{eqn: ksfromz2}. There exists a unique classical solution to the backward Kolmogorov equation \eqref{eqn: kolmogorovks} starting at $\Phi\in\mathrm{C^2_L}(\R^d)$, given by 
	\begin{equation*}
		u(\pi,s) = \mathbb{E}^{\mathbb P^\pi}\left[\Phi(\Pi_T^{s,\pi})\right],\quad (\pi,s)\in \ptwo\times [0,T].
	\end{equation*}
\end{theorem}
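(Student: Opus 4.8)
The plan is to follow verbatim the scheme used for the Zakai case in Theorem \ref{thm: exbkwzakai}, splitting the statement into a uniqueness part (obtained through the representation formula) and an existence part (obtained by checking that the candidate furnished by that formula is a classical solution). The two new ingredients are the time-dependent version of the It\^o formula of Proposition \ref{prop: itoks} and the regularity already collected in Proposition \ref{prop: regks}; the only genuine difference with respect to the Zakai argument is that the underlying probability measure $\mathbb{P}^\pi$ now depends on the starting point $\pi$.

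For uniqueness, I would take an arbitrary classical solution $u$ of \eqref{eqn: kolmogorovks} and compose it with $\Pi^{s,\pi}$ under $\mathbb{P}^\pi$. By the time-dependent It\^o formula coming from Proposition \ref{prop: itoks}, the finite-variation part of $\de u(\Pi^{s,\pi}_\tau,\tau)$ equals $(\partial_s u + \mathcal{L}^{KS}u)(\Pi^{s,\pi}_\tau,\tau)\de\tau$, which vanishes because $u$ solves the backward equation. The remaining term is a stochastic integral against the innovation process $I^\pi$, which is a $\mathbb{P}^\pi$-Brownian motion by construction; since $u$ and all its derivatives are bounded and $h$ is bounded, the integrand $\Pi^{s,\pi}_\tau\big((h-\Pi^{s,\pi}_\tau(h)+B)\lf u(\Pi^{s,\pi}_\tau,\tau)\big)$ is bounded, so the integral is a genuine $\mathbb{P}^\pi$-martingale with zero mean. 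Taking $\mathbb{P}^\pi$-expectation between $s$ and $T$, and using $\Pi^{s,\pi}_s=\pi$ and $u(\cdot,T)=\Phi$, then yields $u(\pi,s)=\mathbb{E}^{\mathbb{P}^\pi}[\Phi(\Pi^{s,\pi}_T)]$, which pins down $u$ uniquely.

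For existence, I would set $u(\pi,s)\defeq\mathbb{E}^{\mathbb{P}^\pi}[\Phi(\Pi^{s,\pi}_T)]$. Proposition \ref{prop: regks} already gives $u(\cdot,s)\in\mathrm{C^2_L}(\ptwo)$ for each fixed $s$, together with the continuity of $s\mapsto\mathcal{L}^{KS}u(\pi,s)$ and of $(\tau,\sigma)\mapsto\mathcal{L}^{KS}u(\Pi^{s,\pi}_\tau,\sigma)$ in $L^2(\Omega,\mathbb{P}^\pi)$, so only the equation and the time regularity remain. As in \eqref{eqn: 9}, I would prove $\lim_{h\to0}\frac1h[u(\pi,s+h)-u(\pi,s)]=-\mathcal{L}^{KS}u(\pi,s)$: using the Markov property of $\Pi^{s,\pi}$ one writes $u(\pi,s)=\mathbb{E}^{\mathbb{P}^\pi}[u(\Pi^{s,\pi}_{s+h},s+h)]$, applies Proposition \ref{prop: itoks}, takes $\mathbb{P}^\pi$-expectation so that the $I^\pi$-martingale part drops, and obtains $u(\pi,s+h)-u(\pi,s)=-\mathbb{E}^{\mathbb{P}^\pi}[\int_s^{s+h}\mathcal{L}^{KS}u(\Pi^{s,\pi}_\tau,s+h)\de\tau]$. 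Dividing by $h$ and letting $h\to0$, the continuity from Proposition \ref{prop: regks} and the mean-value theorem give the right derivative $-\mathcal{L}^{KS}u(\pi,s)$; this limit being continuous in $s$, a standard argument upgrades $u$ to $\mathrm{C}^1([0,T])$ in time and identifies it as the classical solution.

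I expect the main obstacle to be precisely the $\pi$-dependence of $\mathbb{P}^\pi$, which is absent in the Zakai case where $\mathbb{Q}$ is fixed along all initial data. This requires care at two points. First, the Markov property must be invoked in the form of a transition kernel independent of the starting measure, so that $\mathbb{E}^{\mathbb{P}^\pi}[\Phi(\Pi^{s,\pi}_T)\mid\mathcal{F}_{s+h}]=u(\Pi^{s,\pi}_{s+h},s+h)$ with the same function $u(\cdot,s+h)$; this is legitimate because the uniqueness in law of Theorem \ref{thm: uniqhl} makes the solution a genuine Markov process. Second, all the moment and martingale estimates underlying the limit passage must be performed under the changing measure $\mathbb{P}^\pi$, for which the uniform integrability of the density $\xi^\pi$ and of $1/\rho^{s,\pi}(\R^d)$ exploited in the proof of Proposition \ref{prop: regks} is the decisive tool. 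Once these points are settled, the remaining computations are routine and parallel the Zakai case.
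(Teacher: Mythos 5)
Your proposal is correct and takes essentially the same route as the paper, whose proof of Theorem \ref{thm: exuniqkskolm} is precisely a reduction to the Zakai argument of Section \ref{sec: exuniqzakai}: uniqueness via the representation formula obtained from the It\^o formula of Proposition \ref{prop: itoks}, and existence via the regularity of Proposition \ref{prop: regks} together with the Markov property, exactly as you lay out. Your additional care about the $\pi$-dependence of $\mathbb{P}^\pi$ (Markov property via uniqueness in law from Theorem \ref{thm: uniqhl}, estimates under the changing measure using $\xi^\pi$ and $1/\rho^{s,\pi}(\R^d)$) is more explicit than the paper's one-line proof but fully consistent with its surrounding remarks.
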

\begin{proof}
	The proof follows exactly the one we did in Section \ref{sec: exuniqzakai} for the backward Kolmogorov equation associated to the Zakai equation, exploiting the regularity results in Proposition \ref{prop: regks} and the Markov property for the existence, and the It\^o formula in Proposition \ref{prop: itoks} for the uniqueness.
\end{proof}
\begin{appendix}
\section{Proof of Lemma \ref{lemma: solinm2p2}}\label{app: a} 
For simplicity, we provide a sketch of the proof for the Zakai equation in the one dimensional case ($d=1$). The general case with $d>1$ and the case of Kushner-Stratonovich equation are immediate extensions. In order to keep the notation lighter, we remove the tildes in the notation for the weak solutions and we will denotes the expectation $\mathrm{E}^{\tilde{\mathbb{Q}}}$ with respect to $\tilde{\mathbb{Q}}$ just with $\mathbb{E}$.\\
	
	First, we show that if $\mu\in\mathcal{M}_1^+(\R)$, then $\rho_t\in\mathcal{M}_1^+(\R)$ for every $t\in[0,T]$, almost surely. Let us consider a smooth function $\psi\colon\R\to\R$ which is greater than the mapping $x\mapsto|x|$ in a neighbourhood of $0$ and equal to $x\mapsto|x|$ outside that neighbourhood. In particular, $\psi$ has bounded first and second-order derivatives. If we show that $\scalprod{\rho_t}{\psi}<\infty$ for every $t\in[0,T]$, almost surely, then this first claim is proved.\\
	
	Let us consider an increasing family of smooth cut-off functions $\left\{\phi^N\right\}_{N\geq 1}$ which are equal to one in $[-N, N]$ and equal to zero outside $[-N-1,N+1]$. These functions can be chosen to be bounded together with their first and second-order derivatives by a constant $C>0$ independent of $N$. We set $\psi^N(x) = \psi(x)\phi^N(x)$ for every $x\in\R$, thus it holds
	\begin{align*}
		\diff _x \psi^N(x) & =\psi(x) \diff _x \phi^N(x) + \diff _x\psi(x) \phi^N(x), \\
		\diff ^2_x \psi^N(x) &= \psi(x) \diff ^2_x \phi^N(x) + 2\diff _x\psi(x) \diff _x\phi^N(x) +  \diff ^2_x\psi(x) \phi^N(x),
	\end{align*}
	and moreover $\{\psi^N\}_{N\geq1}, \{\diff _x \psi^N\}_{N\geq1}$ and  $\{\diff ^2_x \psi^N\}_{N\geq1}$ converge pointwise to $\psi, \diff _x\psi$ and $\diff _x^2\psi$ respectively, where the first convergence takes place monotonically. We also notice that for every $N\geq 1$, $\psi^N\in\mathrm{C^2_b}(\R)$.\\

	Let us fix $N\geq 1$. Since $\rho$ is a weak solution to the Zakai equation, it holds for every $t\in[0,T]$ 
	\begin{equation*}
		\scalprod{\rho_t}{\psi^N} = \scalprod{\mu}{\psi^N} + \int_0^t \scalprod{\rho_s}{A\psi^N}\de s + \int_0^t \scalprod{\rho_s}{(h + B)\psi^N}\cdot \de Y_s.
	\end{equation*}
	By taking the square, the expectation and then by It\^o isometry, we get
	\begin{equation*}
		\E{\scalprod{\rho_t}{\psi^N}^2}\leq 3\scalprod{\mu}{\psi^N}^2 + 3T\int_0^t \E{\scalprod{\rho_s}{A\psi^N}^2}\de s + 3\int_0^t \E{\scalprod{\rho_s}{(h + B)\psi^N}^2}\de s.
	\end{equation*}
	Now, if we write explicitly the operators $A, B$ and we use the boundedness of $b,\sigma,\bar\sigma, h$ jointly with the boundedness of $\diff _x\psi,\diff ^2_x\psi, \phi^N,\diff _x\phi^N,\diff _x^2\phi^N$ (recalling that the bound for $\phi^N,\diff _x\phi^N,\diff _x^2\phi^N$ does not depend on $N$), we obtain the inequality
	\begin{equation}\label{eqn: stimaphin}
		\E{\scalprod{\rho_t}{\psi^N}^2}\leq M_1\left(\scalprod{\mu}{\psi^N} ^2+ \int_0^t \E{\scalprod{\rho_s}{\psi^N}^2} + \E{\rho_s(\R)^2}\de s\right),
	\end{equation}
	where $M_1 = M_1(b,\sigma,\bar\sigma, h, T, C, \psi)$ is a positive constant independent of $t$ and $N$. Thanks to the monotone convergence theorem, we can pass to the limit as $N\to + \infty$ in \eqref{eqn: stimaphin}  and get
	\begin{equation*}
		\E{\scalprod{\rho_t}{\psi}^2}\leq M_1\left(\scalprod{\mu}{\psi}^2 + \int_0^t \E{\scalprod{\rho_s}{\psi}^2} + \E{\rho_s(\R)^2}\de s\right).
	\end{equation*}
	Thus, in view of Remark \ref{rmk: bddmass} and by Gronwall's lemma, there exists a positive constant $M_2 = M_2(b,\sigma,\bar\sigma, h, T, C, \psi,\mu)$ such that for every $t\in[0,T]$ it holds that
	\begin{equation}\label{eqn: boundmomentoprimotfixed}
		\E{\scalprod{\rho_t}{\psi}^2}\leq M_2\scalprod{\mu}{\psi}^2.
	\end{equation}
	Since the bound \eqref{eqn: boundmomentoprimotfixed} does not depend on $t$, we can proceed similarly to the previous steps and by Burkholder inequality and monotone convergence we can also deduce that there exists a positive constant $M_3 = M_3(b,\sigma,\bar\sigma, h, T, C, \psi,\mu)$ such that
	\begin{equation}\label{eqn: bddfirstmoment}
		\E{\sup_{0\leq t\leq T}\scalprod{\rho_t}{\psi}^2}\leq M_3\scalprod{\mu}{\psi}^2,
	\end{equation}
	thus if $\mu\in\mathcal{M}_1^+(\R)$ then $\sup_{0\leq t\leq T}\scalprod{\rho_t}{\psi}<+\infty$ almost surely and so $\rho_t\in\mathcal{M}_1^+(\R)$ for every $t\in[0,T]$, almost surely.\\
	
	To conclude, we need to prove that if $\mu\in\mathcal{M}_2^+(\R)$ then $\rho_t\in\mathcal{M}_2^+(\R)$ for every $t\in[0,T]$, almost surely. To this aim, we can proceed analogously to the above case in which $\mu\in\mathcal{M}_1^+(\R)$, choosing $\psi(x) = x^2$ and noticing that its first and second-order derivatives are linear and constant respectively. Then, we can still use the approximation technique, combined with Remark \ref{rmk: bddmass} and \eqref{eqn: bddfirstmoment} and so the lemma is proved.

\end{appendix}
\subsection*{Acknowledgements} The author thanks professor Marco Fuhrman for bringing this problem to his attention and for the several useful discussions, and professor Boualem Djehiche for the helpful discussions during his stay in Milan. The author would like to thank the  anonymous referees for their very helpful comments and suggestions from which the manuscript has benefited.

\printbibliography
\end{document}